
\documentclass[12pt]{amsart}

\usepackage{amsmath,amssymb}
\usepackage[left=25mm,right=25mm,top=25mm,bottom=25mm]{geometry}
\usepackage{graphicx,psfrag,epstopdf}
\usepackage{ifthen}
\usepackage{verbatim}

\usepackage[dvips]{color}

\def\new#1{{\color{red}#1}}
\def\new#1{{#1}}

\def\dpr#1{#1}

\def\T{\mathbb T}
\def\A{\mathbb A}

\def\R{{\mathbb R}}
\def\N{{\mathbb N}}

\def\EE{{\mathcal E}}

\def\MM{{\mathcal M}}
\def\OO{{\mathcal O}}
\def\SS{{\mathcal S}}
\def\TT{{\mathcal T}}
\def\LL{{\mathcal L}}
\def\PP{{\mathcal P}}

\def\NN{{\mathcal N}}

\def\KK{\mathcal K}

\def\refine{{\tt refine}}
\def\RR{\mathcal R}
\def\res{{{\rm res}}}

\def\diam{{\rm diam}}
\def\norm#1#2{\|#1\|_{#2}}

\def\set#1#2{\big\{#1\,:\,#2\big\}}
\def\eps{\varepsilon}
\def\dual#1#2{\langle#1\,,\,#2\rangle}

\def\normHe#1#2{\norm{#1}{H^1(#2)}}
\def\normHeh#1#2{\norm{#1}{H^{1/2}(#2)}}
\def\normL2#1#2{\norm{#1}{L^2(#2)}}

\def\mu{\widetilde\varrho}

\newcounter{constantsnumber}
\def\setc#1{%
\ifthenelse{\equal{#1}{reliable}}{C_{\rm rel}}{%
\ifthenelse{\equal{#1}{efficient}}{C_{\rm eff}}{%
\ifthenelse{\equal{#1}{orthogonality}}{C_{\rm orth}}{%
\ifthenelse{\equal{#1}{dlr}}{C_{\rm dlr}}{%
\ifthenelse{\equal{#1}{scottzhang}}{C_{\rm sz}}{%
\ifthenelse{\equal{#1}{szoptimality}}{C_{\rm cea}}{%
\ifthenelse{\equal{#1}{refine}}{C_{\rm ref}}{%
\ifthenelse{\equal{#1}{nvb}}{C_{\rm nvb}}{%
\ifthenelse{\equal{#1}{afem}}{C_{\rm opt}}{%
\refstepcounter{constantsnumber}%
\label{const#1}C_{\theconstantsnumber}%
}}}}}}}}}}
\def\c#1{%
\ifthenelse{\equal{#1}{reliable}}{C_{\rm rel}}{%
\ifthenelse{\equal{#1}{efficient}}{C_{\rm eff}}{%
\ifthenelse{\equal{#1}{orthogonality}}{C_{\rm orth}}{%
\ifthenelse{\equal{#1}{dlr}}{C_{\rm dlr}}{%
\ifthenelse{\equal{#1}{scottzhang}}{C_{\rm sz}}{%
\ifthenelse{\equal{#1}{szoptimality}}{C_{\rm cea}}{%
\ifthenelse{\equal{#1}{refine}}{C_{\rm ref}}{%
\ifthenelse{\equal{#1}{nvb}}{C_{\rm nvb}}{%
\ifthenelse{\equal{#1}{afem}}{C_{\rm opt}}{%
C_{\ref{const#1}}%
}}}}}}}}}}

\def\osc#1{{\rm osc}_{\EE,#1}}
\def\oscT#1{{\rm osc}_{\TT,#1}}
\def\oscK#1{{\rm osc}_{\KK,#1}}
\def\oscD#1{{\rm osc}_{D,#1}}
\def\oscN#1{{\rm osc}_{N,#1}}
\def\wosc#1{\widetilde{{\rm osc}}_{\EE,#1}}

\def\conv{{\rm conv}}

\newtheorem{theorem}{Theorem}
\newtheorem{proposition}[theorem]{Proposition}
\newtheorem{lemma}[theorem]{Lemma}

\newtheorem{algorithm}[theorem]{Algorithm}

\newenvironment{remark}{\bigskip\noindent\textbf{Remark.}\ \it}{\qed\bigskip}
\newcounter{step}
\renewenvironment{proof}[1][]%
{\setcounter{step}{0}\medskip\noindent\ifthenelse{\equal{#1}{}}{\emph{Proof.~}}{\emph{#1.~}}}%
{\qed\bigskip}

\def\subsection#1
{
\bigskip

\refstepcounter{subsection}
{\noindent\bf\arabic{section}.\arabic{subsection}.~#1.~}
}

\begin{document}

\title[AFEM with Inhomogeneous Dirichlet Data]
{Convergence and Quasi-Optimality of Adaptive FEM\\
with Inhomogeneous Dirichlet Data}
\date{\today}

\author{M.~Feischl}
\author{M.~Page}
\author{D.~Praetorius}
\address{Institute for Analysis and Scientific Computing,
     Vienna University of Technology,
     Wiedner Hauptstra\ss{}e 8-10,
     A-1040 Wien, Austria}
\email{Michael.Feischl@tuwien.ac.at}
\email{Marcus.Page@tuwien.ac.at}
\email{Dirk.Praetorius@tuwien.ac.at\quad\rm(corresponding author)}

\keywords{adaptive finite element methods, convergence analysis, quasi-optimality, inhomogeneous Dirichlet data}
\subjclass[2000]{65N30, 65N50.}


\begin{abstract}
We consider the solution of a second order elliptic PDE with inhomogeneous Dirichlet data by means of adaptive lowest-order FEM. 
As is usually done in practice, the given Dirichlet data are discretized by nodal interpolation. As model example serves the Poisson 
equation with mixed Dirichlet-Neumann boundary conditions. For error estimation, we use an edge-based residual error estimator which 
replaces the volume residual contributions by edge oscillations. For 2D, we prove convergence of the adaptive algorithm
even with quasi-optimal convergence rate. For 2D and 3D, we show convergence if the nodal interpolation operator is replaced by
the $L^2$-projection or the Scott-Zhang quasi-interpolation operator.
As a byproduct of the proof, we show that the Scott-Zhang operator converges pointwise to a limiting operator as the mesh is locally refined. This property might be of independent interest besides the current application. Finally,
numerical experiments conclude the work.
\end{abstract}


\maketitle

\section{Introduction}
\vspace*{-5mm}
\subsection{Model problem}
By now, the thorough mathematical understanding of convergence and quasi-optimality of $h$-adaptive FEM for second-order elliptic PDEs
has matured. However, the focus of the numerical analysis usually lies on model problems with homogeneous Dirichlet conditions,
i.e.\ $-\Delta u = f$ in $\Omega$ with $u=0$ on $\Gamma=\partial\Omega$,
see e.g.\ ~\cite{ckns,doerfler,ks,mns,stevenson}. \dpr{On a bounded Lipschitz domain in $\Omega\subset\R^2$ with polygonal boundary $\Gamma = \partial\Omega$, we consider}
\begin{align}\label{eq:strongform}
\begin{split}
 -\Delta u &= f \quad \text{in } \Omega,\\
 u &= g \quad \text{on } \Gamma_D,\\
 \partial_n u &= \phi \quad \text{on } \Gamma_N
\end{split}
\end{align}
with mixed Dirichlet-Neumann boundary conditions.
\dpr{The boundary $\Gamma$} is split into two relatively open boundary parts, namely the Dirichlet boundary $\Gamma_D$ and the Neumann boundary $\Gamma_N$, i.e.\ $\Gamma_D \cap \Gamma_N = \emptyset$ and $\overline\Gamma_D \cup \overline\Gamma_N = \Gamma$. \dpr{We assume the surface measure of the Dirichlet boundary to be positive $|\Gamma_D|>0$,}
whereas $\Gamma_N$ is allowed to be empty. The given data formally satisfy $f \in \widetilde H^{-1}(\Omega)$, $g \in H^{1/2}(\Gamma_D)$, and $\phi \in H^{-1/2}(\Gamma_N)$.
As is usually required to derive (localized) a~posteriori error estimators, we assume additional regularity of the given data, namely $f \in L^2(\Omega)$, $g \in H^1(\Gamma_D)$, and $\phi \in L^2(\Gamma_N)$.

\new{Whereas certain work on a posteriori error estimation for~\eqref{eq:strongform} has been done, cf.\ \cite{bcd,sv}, none of the proposed
adaptive algorithms have been proven to converge.}
\new{While} the inclusion of inhomogeneous Neumann conditions $\phi$ into the \new{convergence} analysis seems to be obvious, 
incorporating inhomogeneous Dirichlet conditions $g$ is technically more demanding 
and requires novel ideas. First, discrete finite element functions cannot satisfy general inhomogeneous Dirichlet conditions. Therefore, the adaptive algorithm has to deal with an additional discretization $g_\ell$
of $g$. Second, this additional error has to be controlled in the natural trace space which is
the fractional-order 
Sobolev space $H^{1/2}(\Gamma_D)$. Since the $H^{1/2}$-norm is non-local, the a~posteriori error analysis requires appropriate 
localization techniques. These have recently been developed in the context of adaptive boundary element 
methods~\cite{agp,cp:symm,cp:hypsing,effp,efgp,kp}:
Under certain orthogonality properties of $g-g_\ell\in H^1(\Gamma_D)$, the natural trace norm $\norm{g-g_\ell}{H^{1/2}(\Gamma_D)}$ is bounded by a locally weighted $H^1$-seminorm $\norm{h_\ell^{1/2}(g-g_\ell)'}{L^2(\Gamma_D)}$. Here, $h_\ell$ is the local mesh-width, and $(\cdot)'$ denotes the arclength derivative.
Finally, in contrast to homogeneous Dirichlet conditions $g=0$, we loose the Galerkin orthogonality in energy norm. This leads to certain technicalities to derive a contractive quasi-error which is equivalent to the overall Galerkin error in $H^1(\Omega)$.
In conclusion, quasi-optimality and even plain convergence of adaptive FEM with non-homogeneous Dirichlet data is a nontrivial
task. To the best of our knowledge, only~\cite{MNS03} analyzes convergence of adaptive FEM with inhomogeneous Dirichlet data. While the authors also consider the 2D model problem~\eqref{eq:strongform}
with $\Gamma_D=\Gamma$ and lowest-order elements, their analysis relies on an artificial non-standard marking criterion. 
Quasi-optimal convergence rates are not analyzed and can hardly be expected in general~\cite{ckns}.

It is well-known that the Poisson problem~\eqref{eq:strongform} admits a unique weak solution $u\in H^1(\Omega)$ with $u = g$ on $\Gamma_D$ in the sense of traces which solves the variational formulation
\begin{align}\label{eq:weakform}
 \dual{\nabla u}{\nabla v}_\Omega
 &= \dual{f}{v}_\Omega + \dual{\phi}{v}_{\Gamma_N}
 \quad \text{for all } v \in H^1_D(\Omega).
\end{align}
Here, the test space reads $H^1_D(\Omega) = \set{v\in H^1(\Omega)}{v = 0 \text{ on } \Gamma_D \text{ in the sense of traces}}$, and $\dual\cdot\cdot$ denotes the respective $L^2$-scalar products.

\subsection{Discretization}
For the Galerkin discretization, let $\TT_\ell$ be a regular triangulation of $\Omega$ into triangles $T\in\TT_\ell$. We use lowest-order conforming elements, where the ansatz space reads
\begin{align}
 \SS^1(\TT_\ell)
 = \set{V_\ell\in C(\overline\Omega)}{V_\ell|_T \text{ is affine for all }T\in\TT_\ell}.
\end{align}
Since a discrete function $U_\ell\in\SS^1(\TT_\ell)$ cannot satisfy \new{general} continuous Dirichlet conditions, we have to discretize the given data $g \in H^1(\Gamma_D)$. According to the Sobolev inequality on the 1D manifold $\Gamma_D$, the given Dirichlet data are continuous on $\overline\Gamma_D$. Therefore, the nodal interpoland $g_\ell$ of $g$ is well-defined. As is usually done in practice, we approximate $g\approx g_\ell$.
Again, it is well-known that there is a unique $U_\ell \in \SS^1(\TT_\ell)$ with $U_\ell = g_\ell$ on $\Gamma_D$ which solves the Galerkin formulation
\begin{align}\label{eq:galerkin}
 \dual{\nabla U_\ell}{\nabla V_\ell}_\Omega
 &= \dual{f}{V_\ell}_\Omega + \dual{\phi}{V_\ell}_{\Gamma_N}
 \quad\text{for all }V_\ell \in \SS^1_D(\TT_\ell).
\end{align}
Here, the test space is given by $\SS^1_D(\TT_\ell) = \SS^1(\TT_\ell) \cap H^1_D(\Omega) = \set{V_\ell \in \SS^1(\TT_\ell)}{V_\ell = 0 \text{ on } \Gamma_D}$.

\subsection{A~posteriori error estimation}
An element-based residual error estimator for this discretization reads
\begin{align}\label{eq1:estimator:T}
 \rho_\ell^2 = \sum_{T\in\TT_\ell}\rho_\ell(T)^2
\end{align}
with corresponding refinement indicators
\begin{align}\label{eq2:estimator:T}
 \begin{split}
 \rho_\ell(T)^2
 &:= |T|\,\norm{f}{L^2(T)}^2
 \\&\quad
 + |T|^{1/2}\big(\norm{[\partial_nU_\ell]}{L^2(\partial T\cap\Omega)}^2
 + \norm{\phi-\partial_nU_\ell}{L^2(\partial T\cap\Gamma_N)}^2
 + \norm{(g-g_\ell)'}{L^2(\partial T\cap\Gamma_D)}^2\big),
\end{split}
\end{align}
where $[\cdot]$ denotes the jump across edges.
We prove reliability and efficiency of $\rho_\ell$ (Proposition~\ref{prop:reliability:rho}) and discrete local reliability (Proposition~\ref{prop:dlr:rho}). Inspired by~\cite{pp}, we introduce an edge-based error estimator $\varrho_\ell$ which reads
\begin{align}\label{eq1:estimator:E}
 \varrho_\ell^2 = \sum_{E\in\EE_\ell}\varrho_\ell(E)^2.
\end{align}
For an edge $E\in\EE_\ell$, its local contributions read
\begin{align}\label{eq2:estimator:E}
 \varrho_\ell(E)^2 = \begin{cases}
 |E|\norm{[\partial_nU_\ell]}{L^2(E)}^2 + |\omega_{\ell,E}|\norm{f-f_{\omega_{\ell,E}}}{\omega_{\ell,E}}^2
 \quad&\text{if }E\subset\Omega,\\
 |E|\norm{\phi-\partial_nU_\ell}{L^2(E)}^2
 &\text{if }E\subseteq\Gamma_N,\\
 |E|\norm{(g-g_\ell)'}{L^2(E)}^2
 &\text{if }E\subseteq\Gamma_D.
 \end{cases}
\end{align}
Here, $\omega_{\ell,E}\subset\Omega$ denotes the edge patch, and $f_{\omega_{\ell,E}}$ denotes the corresponding integral mean.
The advantage of $\varrho_\ell$ is that the volume residual terms $|T|^{1/2}\norm{f}{L^2(T)}$ in~\eqref{eq2:estimator:T} are replaced by the edge oscillations $|\omega_{\ell,E}|^{1/2}\norm{f-f_{\omega_{\ell,E}}}{\omega_{\ell,E}}$, which are generically of higher order.
\new{The choice of $|E|\norm{(g-g_\ell)'}{L^2(E)}^2$ to measure the contribution of the Dirichlet data approximation is influenced by
the Dirichlet data oscillations, cf.\ Section~\ref{section:oscillations} below.}
We prove that
$\rho_\ell$ and $\varrho_\ell$ are locally equivalent (Lemma~\ref{lemma:local}) and thus obtain reliability and efficiency of $\varrho_\ell$ (Proposition~\ref{prop:reliability}) as well as discrete local reliability (Proposition~\ref{prop:dlr}).


\subsection{Adaptive algorithm}
We use the local contributions of $\varrho_\ell$ to mark edges for refinement in a realization (Algorithm~\ref{algorithm:doerfler})
of the standard adaptive loop (AFEM)
\begin{align}
 \boxed{\texttt{solve}}
 \quad\to\quad
 \boxed{\texttt{estimate}}
 \quad\to\quad
 \boxed{\texttt{mark}}
 \quad\to\quad
 \boxed{\texttt{refine}}
\end{align}
Our adaptive algorithm use variants of the the well-studied D\"orfler marking~\cite{doerfler} to mark certain edges for refinement. 
Throughout, we use newest vertex bisection, and at least marked edges are bisected. Given some initial mesh $\TT_0$, 
the algorithm generates successively locally refined meshes $\TT_\ell$ with corresponding discrete solutions $U_\ell\in\SS^1(\TT_\ell)$ of~\eqref{eq:galerkin}.

\subsection{Main results}
The first main result (Theorem~\ref{thm:contraction}) states that the \new{adaptive} algorithm leads to a contraction
\begin{align}
 \Delta_{\ell+1} \le \kappa\,\Delta_\ell
 \quad\text{for all }\ell\in\N_0
 \text{ and some constant }0<\kappa<1
\end{align}
for some quasi-error quantity $\Delta_\ell\simeq\varrho_\ell^2$ which is equivalent to the error estimator. In particular, this proves linear convergence of the adaptively generated solutions $U_\ell\in\SS^1(\TT_\ell)$ to the (unknown) weak solution $u\in H^1(\Omega)$ of~\eqref{eq:weakform}.
The main ingredients of the proof are an equivalent error estimator $\mu_\ell\simeq\varrho_\ell$ for which we prove some estimator reduction
\begin{align}
 \mu_{\ell+1}^{\,2} \le q\,\mu_\ell^{\,2} + C\,\norm{\nabla(U_{\ell+1}-U_\ell)}{L^2(\Omega)}^2
 \quad\text{for all }\ell\in\N_0
 \text{ and some }0<\kappa<1
 \text{ and }C>0,
\end{align}
see Lemma~\ref{lemma:reduction}, and a quasi-Galerkin orthogonality in Lemma~\ref{lemma:orthogonality}, whereas the general concept follows that of~\cite{ckns}.

The second main result is Theorem~\ref{thm:quasioptimality} which states that the outcome of the adaptive \new{algorithm} is 
quasi-optimal in the sense of Stevenson~\cite{stevenson}: Provided the given data 
$(f,g,\phi)\in L^2(\Omega)\times H^1(\Gamma_D)\times L^2(\Gamma_N)$ and the corresponding weak solution 
$u\in H^1(\Omega)$ of~\eqref{eq:weakform} belong to the approximation class
\begin{align}\label{eq:optimal:class}
 \A_s := \set{(u,f,g,\phi)}{\norm{(u,f,g,\phi)}{\A_s}:=\sup_{N\in\N}\big(N^s\sigma(N,u,f,g,\phi)\big)<\infty}
\end{align}
with
\begin{align}\label{eq:optimal:norm}
\begin{split}
 \sigma(N,u,f,g,\phi)^2 := \inf_{\TT_*\in\T_N} \Big\{&\inf_{W_*\in\SS^1(\TT_*)}
 \norm{\nabla(u-W_*)}{L^2(\Omega)}^2 
 +\new{\oscD{*}^2} 
 + \oscT{*}^2 + \oscN{*}^2\Big\},
\end{split}
\end{align}
the adaptively generated solutions also yield convergence order $\OO(N^{-s})$, i.e.
\begin{align}\label{eq:optimal:order}
 \norm{u-U_\ell}{H^1(\Omega)}
 \lesssim \big(\norm{\nabla(u-U_\ell)}{L^2(\Omega)}^2 + \new{\oscD{\ell}^2}\big)^{1/2}
 \lesssim (\#\TT_\ell-\#\TT_0)^{-s}.
\end{align}
Here, $\T_N$ denotes the set of all triangulations $\TT_*$ which can be obtained by local refinement of the initial mesh 
$\TT_0$ such that $\#\TT_*-\#\TT_0\le N$. Moreover, $\oscT{*}, \new{\oscD{*}}$, and $\oscN{*}$ denote the data oscillations of the volume data $f$,
\new{the Dirichlet data $g$}, and 
the Neumann data $\phi$, see Section~\ref{section:oscillations}. 

The ingredients for the proof are the observation that the proposed marking \new{strategy is} optimal \new{(Proposition~\ref{prop:doerfler})} and the C\'ea-type estimate
\begin{align}
\begin{split}
&\norm{\nabla(u-U_\ell)}{L^2(\Omega)}^2 + \new{\oscD{\ell}^2} 
 \le 
\c{szoptimality}\big(\inf_{W_\ell \in \SS^1(\TT_\ell)}
\norm{\nabla(u-W_\ell)}{L^2(\Omega)}^2 \!+\! \new{\oscD{\ell}^2} \big)
\end{split}
\end{align}
for the Galerkin solution $U_\ell\in\SS^1(\TT_\ell)$ in Lemma~\ref{lem:quasiopt}.

\new{For 3D, nodal interpolation of the Dirichlet data $g\in H^1(\Gamma)$ is not well-defined. In the literature, it is proposed to discretize $g$ by use of the 
$L^2$-projection~\cite{bcd} or the Scott-Zhang projection~\cite{sv}. Our third theorem (Theorem~\ref{thm:3Dconvergence}) states convergence of the 
adaptive algorithm for either choice in 2D as well as 3D. 
\dpr{The proof relies on the analytical observation that, under adaptive mesh-refinement, the Scott-Zhang projection converges pointwise to a limiting operator (Lemma~\ref{lem:apriori:sz}), which
might be of independent interest.}
Finally, we stress that the same results (Thm.~\ref{thm:contraction},~\ref{thm:quasioptimality},~\ref{thm:3Dconvergence}) hold if the element-based estimator
$\rho_\ell$ from~\eqref{eq1:estimator:T}--\eqref{eq2:estimator:T} instead of the edge-based estimator $\varrho_\ell$ is used and if Algorithm~\ref{algorithm:doerfler}
marks certain elements for refinement.}

\subsection{Outline}
The remainder of this paper is organized as follows: We first collect some necessary preliminaries on, e.g., newest vertex bisection 
(Section~\ref{section:nvb}) and the Scott-Zhang quasi-interpolation operator (Section~\ref{section:sz}). 
Section~\ref{section:aposteriori} contains the analysis of the a~posteriori error estimators $\rho_\ell$ 
from~\eqref{eq1:estimator:T}--\eqref{eq2:estimator:T} and $\varrho_\ell$ from~\eqref{eq1:estimator:E}--\eqref{eq2:estimator:E}. 
Moreover, we state \new{the adaptive Algorithm} in Section~\ref{section:doerfler}. The 
convergence is \new{shown} in Section~\ref{section:convergence}, while the quasi-optimality results are found in 
Section~\ref{section:optimal}. Whereas the major part of the paper is concerned with the 2D model problem,
Section~\ref{section:remarks3d} considers convergence of AFEM for 3D. Finally, some numerical experiments conclude the work. 

\section{Preliminaries}

\vspace*{-3mm}
\subsection{Notation}\label{sec:notation}
Throughout, $\TT_\ell$ denotes a regular triangulation which is obtained by $\ell$ steps of (local) newest vertex bisection for a given initial triangulation $\TT_0$. By $\KK_\ell:=\KK_\ell^\Omega\cup \KK_\ell^\Gamma$, we denote the set of all interior nodes, respectively the set of all boundary nodes of $\TT_\ell$. By $\EE_\ell$, we denote the set of all edges of $\TT_\ell$ which is split into the interior edges $\EE_\ell^\Omega = \set{E\in\EE_\ell}{E\cap\Omega\neq\emptyset}$ and boundary edges $\EE_\ell^\Gamma = \EE_\ell\backslash\EE_\ell^\Omega$. We restrict ourselves to meshes $\TT_\ell$ such that each $T\in \TT_\ell$ has an interior node, i.e.\ $\partial T \cap \KK_\ell^\Omega \neq \emptyset$. Note, that this is only an assumption on the initial mesh $\TT_0$.  We assume that the partition of $\Gamma$ into Dirichlet boundary $\Gamma_D$ and Neumann boundary $\Gamma_N$ is resolved, i.e.\ $\EE_\ell^\Gamma$ is split into $\EE_\ell^D=\set{E\in\EE_\ell}{E\subseteq\overline\Gamma_D}$ and
$\EE_\ell^N=\set{E\in\EE_\ell}{E\subseteq\overline\Gamma_N}$. Note that $\EE_\ell^D$ (resp.\ $\EE_\ell^N$) provides a partition of 
$\Gamma_D$ (resp.\ $\Gamma_N$).

For a node $z\in\KK_\ell$, the corresponding patch is defined by
\begin{align}\label{eq:patch:z}
 \omega_{\ell,z} = \bigcup\set{T\in\TT_\ell}{z\in\partial T}.
\end{align}
For an edge $E\in\EE_\ell$, the edge patch is defined by
\begin{align}\label{eq:patch:E}
 \omega_{\ell,E} = \bigcup\set{T\in\TT_\ell}{E\subset\partial T}.
\end{align}
Moreover, for a given node $z\in\KK_\ell$,
\begin{align}
 \EE_{\ell,z} = \bigcup\set{E\in\EE_\ell}{z\in E}
\end{align}
denotes the star of edges originating at $z$.

\begin{figure}[t]
\centering
\psfrag{T0}{}
\psfrag{T1}{}
\psfrag{T2}{}
\psfrag{T3}{}
\psfrag{T4}{}
\psfrag{T12}{}
\psfrag{T34}{}
\includegraphics[width=35mm]{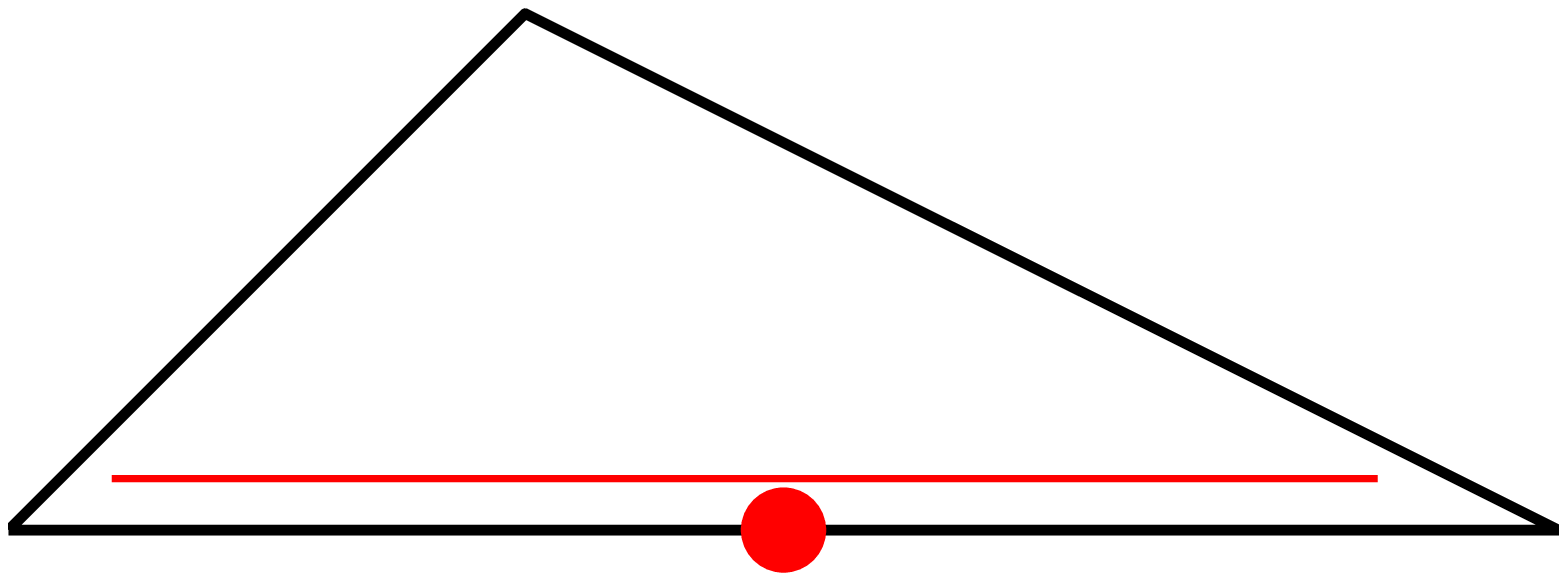} \quad
\includegraphics[width=35mm]{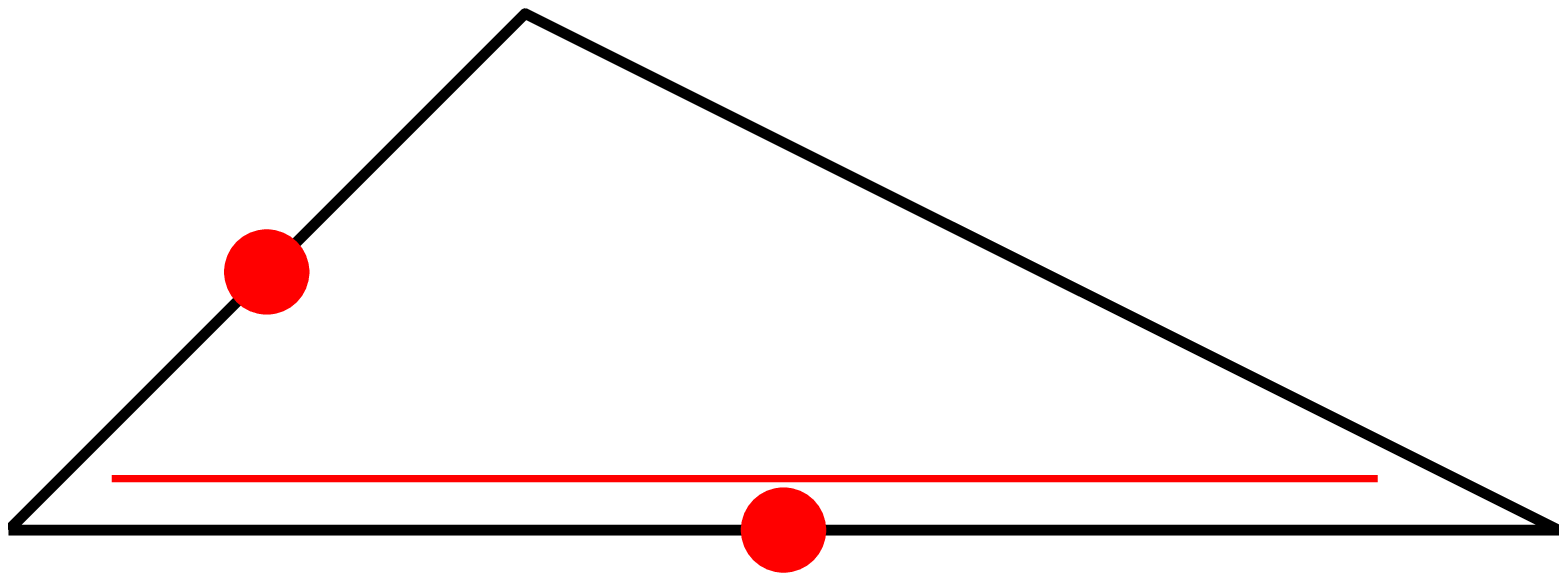} \quad
\includegraphics[width=35mm]{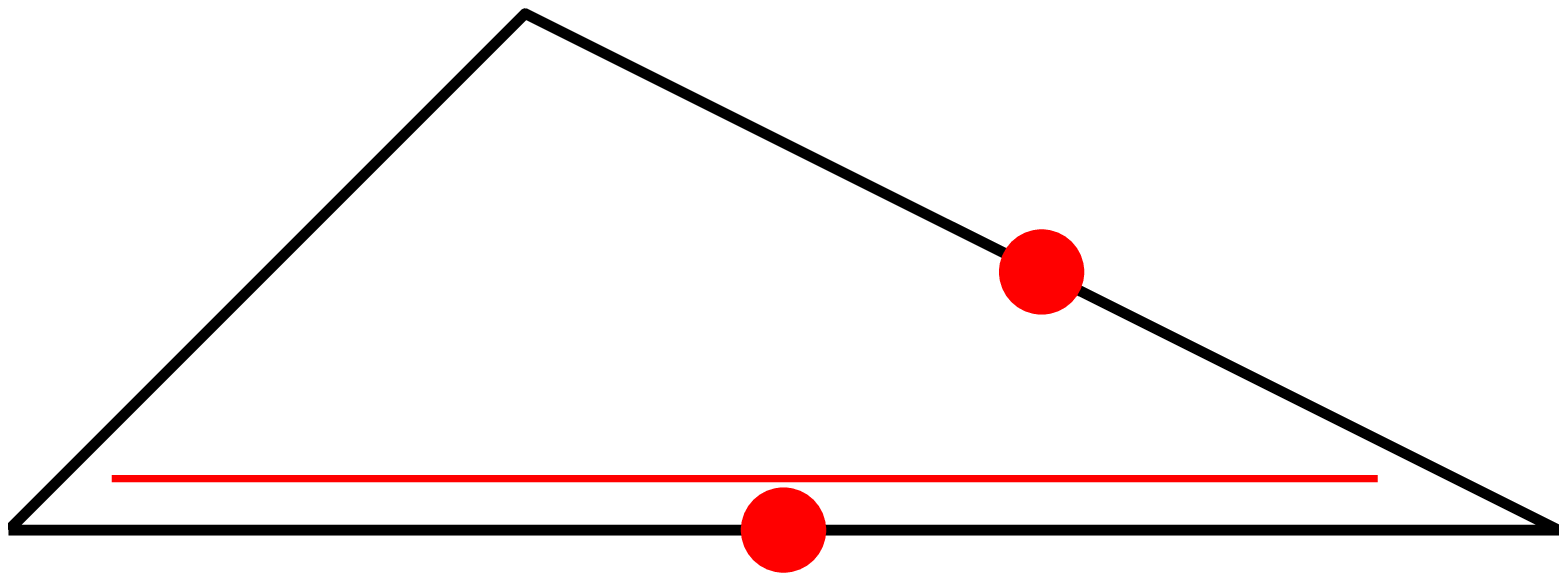} \quad
\includegraphics[width=35mm]{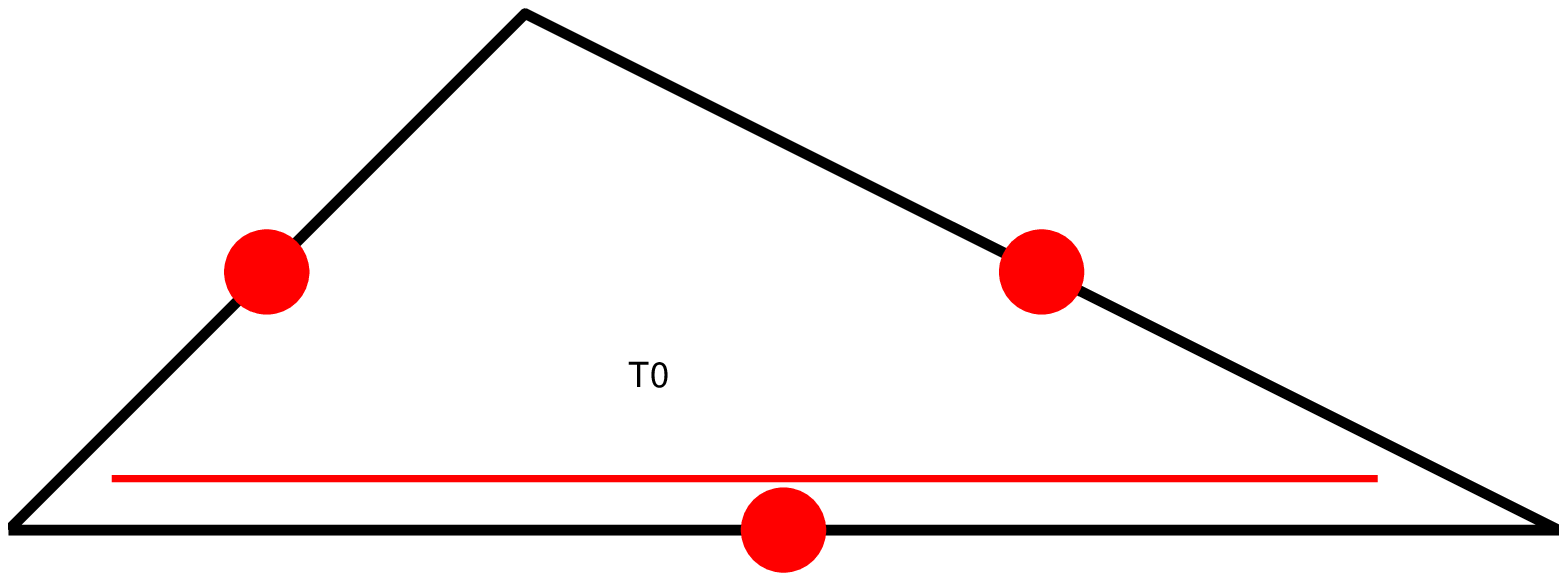} \\
\includegraphics[width=35mm]{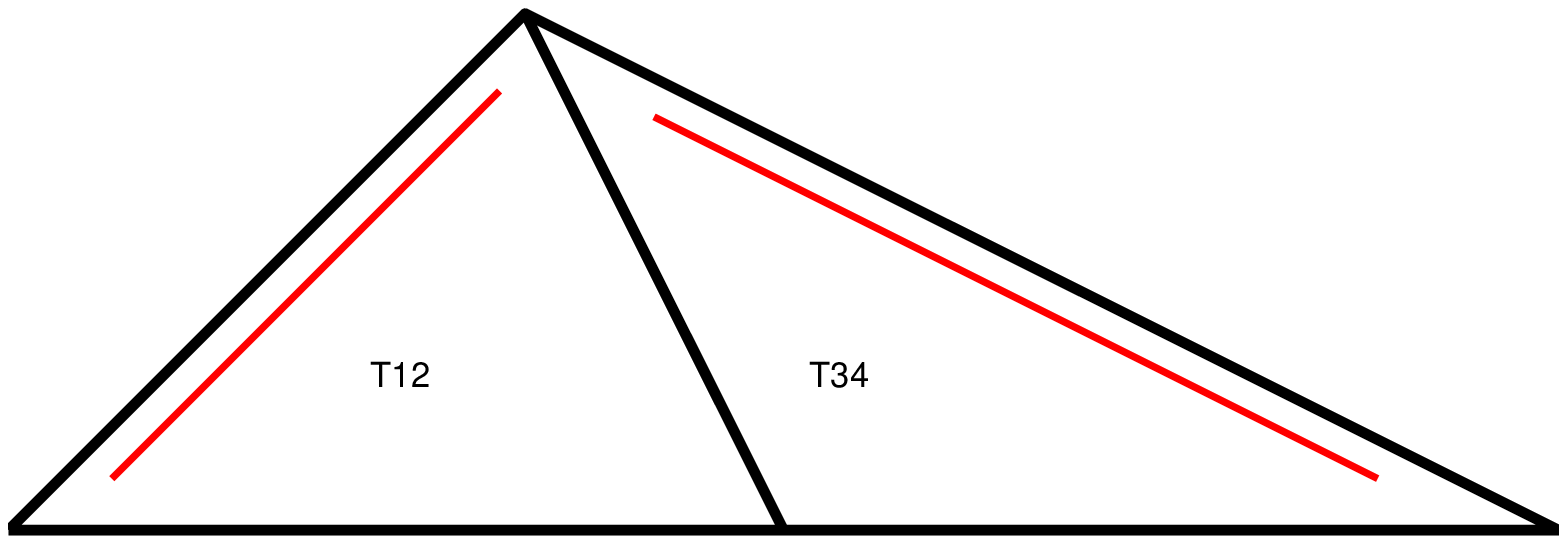} \quad
\includegraphics[width=35mm]{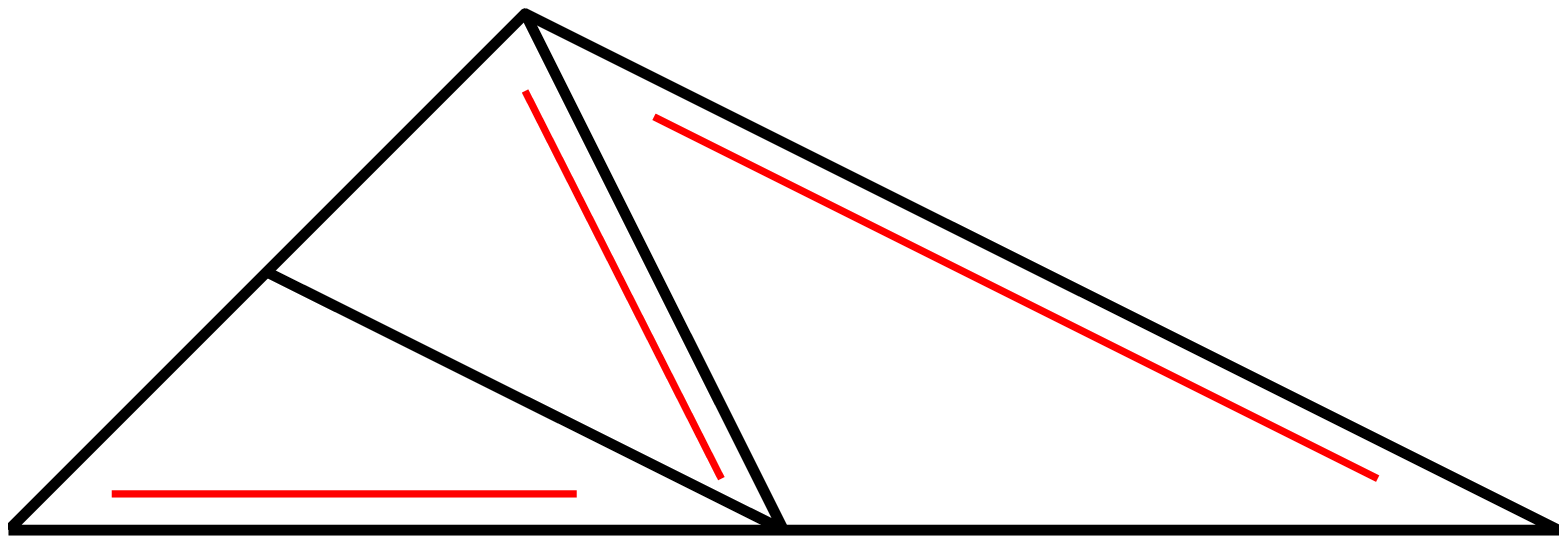}\quad
\includegraphics[width=35mm]{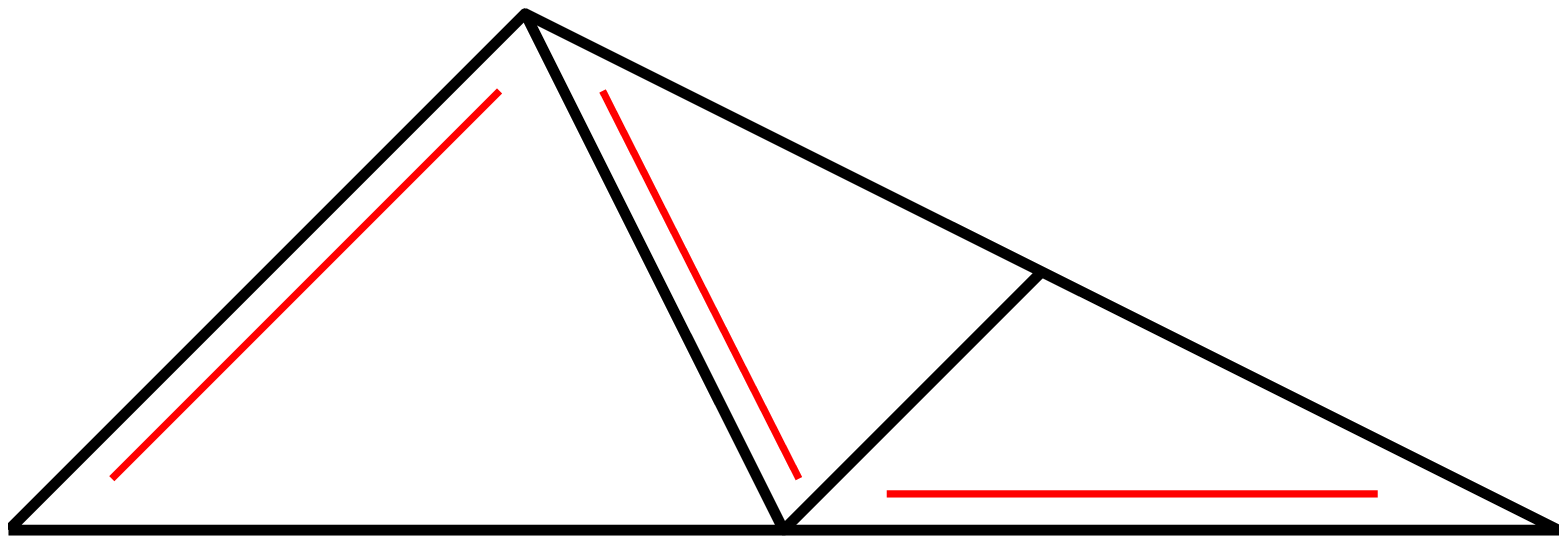}\quad
\includegraphics[width=35mm]{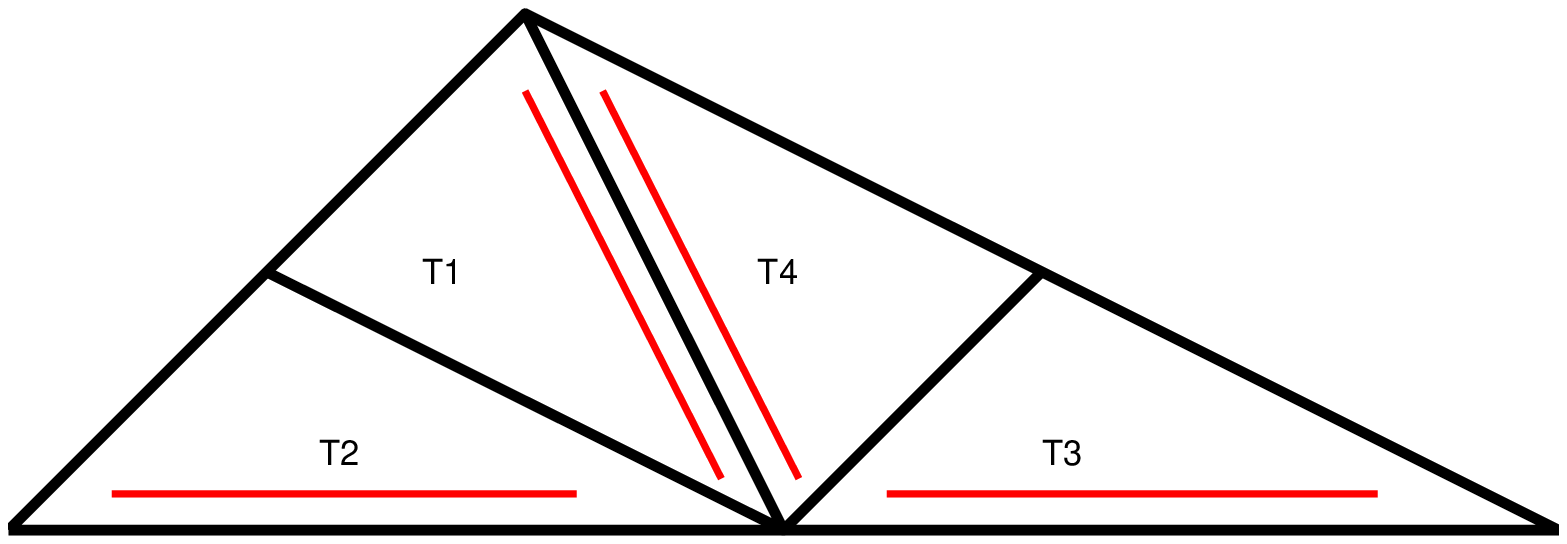}
\caption{
For each triangle $T\in\TT_\ell$, there is one fixed \emph{reference edge},
indicated by the double line (left, top). Refinement of $T$ is done by bisecting
the reference edge, where its midpoint becomes a new node. The reference
edges of the son triangles $T'\in\TT_{\ell+1}$ are opposite to this newest vertex (left, bottom).
To avoid hanging nodes, one proceeds as follows:
We assume that certain edges of $T$, but at least the reference edge,
are marked for refinement (top).
Using iterated newest vertex bisection, the element is then split into
2, 3, or 4 son triangles (bottom).}
\label{fig:nvb}
\end{figure}
%
\subsection{Newest vertex bisection}
\label{section:nvb}%
Throughout, we assume that newest vertex bisection is used for mesh-refinement, see Figure~\ref{fig:nvb}.
Let $\TT_\ell$ be a given mesh and $\MM_\ell\subseteq\EE_\ell$ an arbitrary set of marked edges. Then, \begin{align}
 \TT_{\ell+1}=\refine(\TT_\ell,\MM_\ell)
\end{align}
denotes the coarsest regular triangulation such that all marked edges $E\in\MM_\ell$ have been bisected. Moreover, we write
\begin{align}
 \TT_* = \refine(\TT_\ell)
\end{align}
if $\TT_*$ is a finite refinement of $\TT_\ell$, i.e., there are finitely many triangulations $\TT_{\ell+1},\dots,\TT_n$ and sets of marked edges $\MM_\ell\subseteq\EE_\ell,\dots,\MM_{n-1}\subseteq\EE_{n-1}$ such that $\TT_*=\TT_n$ and $\TT_{j+1}=\refine(\TT_j,\MM_j)$ for all $j=\ell,\dots,n-1$.%

We stress that, for a fixed initial mesh $\TT_0$, only finitely many shapes of triangles $T\in\TT_\ell$ appear. In particular, only finitely many shapes of patches~\eqref{eq:patch:z}--\eqref{eq:patch:E} appear. This observation will be used below. Moreover, newest vertex bisection guarantees that any sequence $\TT_\ell$ of generated meshes with $\TT_{\ell+1}=\refine(\TT_\ell)$ is uniformly shape regular in the sense of
\begin{align}
 \sup_{\ell\in\N}\sigma(\TT_\ell)<\infty,
 \quad\text{where}\quad
 \sigma(\TT_\ell)=\max_{T\in\TT}\frac{\diam(T)^2}{|T|}.
\end{align}%
Further details are found in~\cite[Chapter~4]{verfuerth}.

\subsection{Scott-Zhang quasi-interpolation and discrete lifting operator}
\label{section:sz}%
Our analysis below makes heavy use of the Scott-Zhang projection $P_\ell :H^1(\Omega) \to \SS^1(\TT_\ell)$ from~\cite{sz}: For all nodes $z\in\KK_\ell$, one chooses an edge $E_z\in\EE_\ell$ with $z\in E_z$. For $z\in \Gamma$, this choice is restricted to $E_z \subset \Gamma$. Moreover, for $z\in\overline\Gamma_D$, we even enforce $E_z\subset\overline\Gamma_D$. For $w \in H^1(\Omega)$,
$P_\ell w$ is then defined by
\begin{align*}
 (P_\ell w)(z):= \dual{\psi_z}{w}_{E_z},
\end{align*}
for a node $z\in\KK_\ell$. Here, $\psi_z \in L^2(E_z)$ denotes the dual basis function defined by $\dual{\psi_z}{\varphi_{z^\prime}}_{E_z}=\delta_{zz^\prime}$, and
$\varphi_z\in\SS^1(\TT_\ell)$ denotes the hat function associated with $z\in\KK_\ell$.
By definition, we then have the following projection properties
\begin{itemize}
\item $P_\ell W_\ell = W_\ell$ for all $W_\ell\in\SS^1(\TT_\ell)$,
\item $(P_\ell w)|_\Gamma = w|_\Gamma$ for all $w\in H^1(\Omega)$ and $W_\ell\in\SS^1(\TT_\ell)$ with $w|_\Gamma = W_\ell|_\Gamma$,
\item $(P_\ell w)|_{\Gamma_D} = w|_{\Gamma_D}$ for all $w\in H^1(\Omega)$ and $W_\ell\in\SS^1(\TT_\ell)$ with $w|_{\Gamma_D} = W_\ell|_{\Gamma_D}$,
\end{itemize}
i.e.\ the projection $P_\ell$ preserves discrete (Dirichlet) boundary data.
Moreover, $P_\ell$ satisfies the following stability property
\begin{align}
\label{eq:scottzhangstability}
 \normHe{(1-P_\ell) w}{\Omega} \leq \c{scottzhang}\,\norm{\nabla w}{L^2(\Omega)}\quad \text{for all } w \in H^1(\Omega)
\end{align}
and approximation property
\begin{align}
\label{eq:scottzhangapprox}
 \norm{(1-P_\ell) w}{L^2(\Omega)} \leq \c{scottzhang}\,\norm{h_\ell\nabla w}{L^2(\Omega)}\quad \text{for all } w \in H^1(\Omega)
\end{align}
where $\setc{scottzhang}>0$ depends only on $\sigma(\TT_\ell)$.
Together with the projection property onto $\SS^1(\TT_\ell)$, it is an easy consequence of the stability~\eqref{eq:scottzhangstability} of $P_\ell$ that
\begin{align}
 \label{eq:quasioptscottzhang}
 \normHe{(1-P_\ell)w}{\Omega}
 =\min_{W_\ell\in\SS^1(\TT_\ell)}\normHe{(1-P_\ell)(w-W_\ell)}{\Omega}
 \lesssim\min_{W_\ell\in\SS^1(\TT_\ell)}\norm{\nabla(w-W_\ell)}{L^2(\Omega)}
\end{align}
for all $w\in H^1(\Omega)$. In particular, $P_\ell$ is quasi-optimal in the sense of the C\'ea lemma with respect to $\norm{\cdot}{H^1(\Omega)}$ and $\norm{\nabla(\cdot)}{L^2(\Omega)}$, i.e.
\begin{align}
\begin{split}
 \normHe{(1-P_\ell)w}{\Omega}
 &\lesssim\min_{W_\ell\in\SS^1(\TT_\ell)}\norm{w-W_\ell}{H^1(\Omega)},\\
 \norm{\nabla(1-P_\ell)w}{L^2(\Omega)}
 &\lesssim\min_{W_\ell\in\SS^1(\TT_\ell)}\norm{\nabla(w-W_\ell)}{L^2(\Omega)}.
\end{split}
\end{align}
Moreover, $P_\ell$ allows to define a discrete lifting operator
\begin{align}
\label{eq:discreteLifting}
 \LL_\ell := P_\ell \LL \,:\,\SS^1(\EE_\ell^\Gamma) \to \SS^1(\TT_\ell),
 \quad\text{i.e. }
 \LL_\ell(W_\ell|_\Gamma)|_\Gamma = W_\ell|_\Gamma
 \quad\text{for all }W_\ell\in\SS^1(\TT_\ell)
\end{align}
whose operator norm is uniformly bounded in terms of $\sigma(\TT_\ell)$. Here, $\LL\in L(H^{1/2}(\Gamma); H^1(\Omega))$ denotes an arbitrary lifting operator, i.e.\ $(\LL w)|_\Gamma = w$ for all $w\in H^{1/2}(\Gamma)$, see e.g.\ \cite{mclean}.

Finally, we put emphasis on the fact that our definition of $P_\ell$ also provides an operator $P_\ell = P_\ell^\Gamma :L^2(\Gamma) \to \SS^1(\EE_\ell^\Gamma)$ which is consistent in the sense that $(P_\ell v)|_\Gamma = P_\ell^\Gamma (v|_\Gamma)$ for all $v\in H^1(\Omega)$.
Using the definition of $H^{1/2}(\Gamma)$ as the trace space of $H^1(\Omega)$
and the stability~\eqref{eq:scottzhangstability}, we see
\begin{align*}
 \norm{\widehat g-P_\ell \widehat g}{H^{1/2}(\Gamma)}
 &:= \inf\set{\normHe{w}{\Omega}}{w\in H^1(\Omega), w|_{\Gamma} = \widehat g-P_\ell \widehat g}\\
 &\le \inf\set{\norm{w-P_\ell w}{H^1(\Omega)}}{w\in H^1(\Omega), w|_\Gamma=\widehat g}\\
 &\lesssim\inf\set{\norm{\nabla w}{L^2(\Omega)}}{w\in H^1(\Omega), w|_\Gamma=\widehat g}\\
 &\le \inf\set{\norm{w}{H^1(\Omega)}}{w\in H^1(\Omega), w|_\Gamma=\widehat g}
 = \norm{\widehat g}{H^{1/2}(\Gamma)}
\end{align*}
for all $\widehat g\in H^{1/2}(\Gamma)$, i.e.\ $P_\ell:H^{1/2}(\Gamma)\to\SS^1(\EE_\ell^\Gamma)$ is a continuous projection with respect to the $H^{1/2}$-norm. In particular, $P_\ell$ also provides a continuous projection $P_\ell = P_\ell^D:H^{1/2}(\Gamma_D)\to\SS^1(\EE_\ell^{D})$, since
\begin{align*}
 \norm{g-P_\ell g}{H^{1/2}(\Gamma_D)}
 &= \inf\set{\norm{\widehat g-P_\ell \widehat g}{H^{1/2}(\Gamma)}}{\widehat g\in H^{1/2}(\Gamma), \widehat g|_{\Gamma_D}=g}\\
 &\lesssim \inf\set{\norm{\widehat g}{H^{1/2}(\Gamma)}}{\widehat g\in H^{1/2}(\Gamma), \widehat g|_{\Gamma_D}=g}
 = \norm{g}{H^{1/2}(\Gamma_D)}
\end{align*}
for all $g\in H^{1/2}(\Gamma_D)$. As before, this definition is consistent with the previous notation of $P_\ell$ since $(P_\ell^\Gamma\widehat g)|_{\Gamma_D} = P_\ell^D(\widehat g|_{\Gamma_D})$ for all $\widehat g\in H^{1/2}(\Gamma)$.

\section{A~Posteriori Error Estimation and Adaptive Mesh-Refinement}
\label{section:aposteriori}%

\vspace*{-3mm}
\subsection{Data oscillations}
\label{section:oscillations}%
We start with the element data oscillations
\begin{align}
 \oscT{\ell}^2:=\sum_{T\in\TT_\ell}\oscT{\ell}(T)^2,
 \text{ where }
 \oscT{\ell}(T)^2:=|T|\,\normL2{f-f_T}{T}^2 \quad \text{for all } T\in\TT_\ell
\end{align}
and where $f_T:=|T|^{-1}\int_T f\,dx\in\R$ denotes the integral mean over an element $T\in\TT_\ell$. These arise in the efficiency estimate for residual error estimators.

Our residual error estimator will involve the edge data oscillations
\begin{align}\label{eq:osc:edge}
 \osc{\ell}^2 := \sum_{E\in\EE_\ell^\Omega}\osc{\ell}(E)^2,
 \text{ where }
 \osc{\ell}(E)^2 := |\omega_{\ell,E}|\,\norm{f-f_{\omega_{\ell,E}}}{L^2(\omega_{\ell,E})}^2
 \text{ for all }E\in\EE_\ell^\Omega.
\end{align}
Here, $\omega_{\ell,E}\subset\Omega$ is the edge patch from~\eqref{eq:patch:E}, and $f_{\omega_{\ell,E}}\in\R$ is the corresponding integral mean of $f$.

For the analysis, we shall additionally need the node data oscillations
\begin{align}\label{eq:osc:node}
 \oscK{\ell}^2 := \sum_{z\in\KK_\ell^\Omega}\oscK{\ell}(z)^2,
 \text{ where }
 \oscK{\ell}(z)^2 := |\omega_{\ell,z}|\,\norm{f-f_{\omega_{\ell,z}}}{L^2(\omega_{\ell,z})}^2
 \text{ for all }z\in\KK_\ell^\Omega.
\end{align}
Here, $\omega_{\ell,z}\subset\Omega$ is the node patch from~\eqref{eq:patch:z}, and $f_{\omega_{\ell,z}}\in\R$ is the corresponding integral mean of $f$.

Moreover, the efficiency needs the Neumann data oscillations
\begin{align}\label{eq:osc:neu}
 \oscN{\ell}^2:=\sum_{E\in\EE_\ell^N}\oscN\ell(E)^2,
 \text{ where }
 \oscN{\ell}(E)^2:=|E|\,\normL2{\phi-\phi_E}{E}^2
 \text{ for all } E \in \EE_\ell^N
\end{align}
and where $\phi_E:=|E|^{-1}\int_E \phi \,dx$ denotes the integral mean over an edge $E\in\EE_\ell^N$.

Finally, the approximation of the Dirichlet data $g\approx g_\ell$ is controlled by the Dirichlet data oscillations
\begin{align}\label{eq:osc:dir}
 \oscD{\ell}:=\sum_{E\in\EE_\ell^D} \oscD{\ell}(E)^2,
 \text{ where }
 \oscD{\ell}(E)^2:=|E|\normL2{(g-g_\ell)^\prime}{E}^2
 \text{ for all }E\in\EE_\ell^D.
\end{align}
\new{Recall that, on the 1D manifold $\Gamma_D$, the derivative of the nodal interpoland is the elementwise best approximation of the derivative by piecewise constants, i.e.,
\begin{align}\label{eq:bestapprox}
 \norm{(g-g_\ell)'}{L^2(E)} = \min_{c\in\R}\norm{g'-c}{L^2(E)}
 \quad\text{for all }E\in\EE_\ell^D.
\end{align}
According to the elementwise Pythagoras theorem, this implies
\begin{align}\label{eq:nodal:orthogonality}
 \norm{(g-g_\ell)'}{L^2(E)}^2 + \norm{(g_\ell-\widetilde g_\ell)'}{L^2(E)}^2
 = \norm{(g-\widetilde g_\ell)'}{L^2(E)}^2
 \text{ for all }\widetilde g_\ell\in\SS^1(\EE_\ell^D)
\end{align}
and all Dirichlet edges $E\in\EE_\ell^D$. This observation will be crucial in the analysis below.
Moreover,~\eqref{eq:bestapprox} yields  
\begin{align}
\norm{h_\ell^{1/2}(g-g_\ell)^\prime}{L^2(\Gamma_D)} = \min_{W_\ell \in \SS^1(\TT_\ell)}\norm{h_\ell^{1/2}(g-W_\ell|_\Gamma)^\prime}{L^2(\Gamma_D)} .
\end{align}
The following result is found in~\cite[Lemma~2.2]{efgp}.
\begin{lemma}\label{lemma:apx}
Let $g\in H^1(\Gamma_D)$ and let $g_\ell$ denote the nodal interpoland of $g_\ell$ on $\overline\Gamma_D$. Then,
\begin{align}\label{eq:apx}
 \norm{g-g_\ell}{H^{1/2}(\Gamma_D)}
 \le \c{apx}\,\oscD\ell,
\end{align}
where the constant $\setc{apx}>0$ depends only on the shape regularity constant $\sigma(\TT_\ell)$ and $\Omega$.\qed
\end{lemma}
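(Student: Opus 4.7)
The plan is to combine an $H^{1/2}$-stable quasi-interpolation on the one-dimensional manifold $\Gamma_D$ with a local inverse/localization estimate that exploits the fact that $g-g_\ell$ vanishes at every node of $\EE_\ell^D$. The quantitative link between the fractional trace norm and the weighted $H^1$-seminorm will come from real interpolation between $L^2(\Gamma_D)$ and $H^1(\Gamma_D)$.

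First, I would insert the boundary Scott--Zhang projection $P := P_\ell^D : H^{1/2}(\Gamma_D) \to \SS^1(\EE_\ell^D)$ from Section~\ref{section:sz} and split
\[
 \norm{g-g_\ell}{H^{1/2}(\Gamma_D)}
 \le \norm{g-Pg}{H^{1/2}(\Gamma_D)} + \norm{Pg-g_\ell}{H^{1/2}(\Gamma_D)}.
\]
For the first summand, the standard approximation property of $P$ on the $1$D manifold (obtained by the $K$-method of real interpolation between $L^2(\Gamma_D)$ and $H^1(\Gamma_D)$) yields
\[
 \norm{g-Pg}{H^{1/2}(\Gamma_D)} \lesssim \norm{h_\ell^{1/2}(g-Pg)'}{L^2(\Gamma_D)},
\]
and the best-approximation property~\eqref{eq:bestapprox} of the nodal interpolant on each edge, together with the $H^1$-stability of $P$ on $\Gamma_D$, bounds the right-hand side by $\oscD{\ell}$.

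For the second, discrete summand, $w := Pg-g_\ell \in \SS^1(\EE_\ell^D)$ is piecewise affine, so I would apply the inverse-type estimate
\[
 \norm{w}{H^{1/2}(\Gamma_D)}^2 \lesssim \norm{h_\ell^{1/2}\,w'}{L^2(\Gamma_D)}^2
\]
valid for discrete functions on shape-regular 1D partitions. Combining triangle inequality on each edge with the $H^1$-stability of $P$ then yields $\norm{h_\ell^{1/2}w'}{L^2(\Gamma_D)} \lesssim \norm{h_\ell^{1/2}(g-g_\ell)'}{L^2(\Gamma_D)} = \oscD{\ell}$. Adding the two contributions proves~\eqref{eq:apx}.

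The main obstacle is the discrete inverse estimate for the $H^{1/2}$-norm of $\SS^1(\EE_\ell^D)$-functions on a locally refined mesh: a naive quasi-uniform argument fails because $h_\ell$ can vary strongly between neighbouring edges. The clean way around this is to use a dyadic/level decomposition of $w$ together with shape-regularity of $\TT_\ell$ (or, equivalently, a weighted real-interpolation argument based on Faermann's localization of $H^{1/2}$). Once this inverse estimate is established, all remaining steps are bookkeeping via the stability and projection properties of $P_\ell^D$ collected in Section~\ref{section:sz}. Since the constant depends only on $\sigma(\TT_\ell)$ and the macroscopic geometry of $\Omega$, the dependence claimed in the lemma is automatic from the construction.
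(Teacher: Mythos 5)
The paper does not prove this lemma; it cites \cite[Lemma~2.2]{efgp}, where the argument exploits the crucial structural fact that $g-g_\ell$ vanishes at \emph{all} nodes of $\EE_\ell^D$, so that $(g-g_\ell)|_E$ extends by zero to a function in $\widetilde H^{1/2}(E)$, and the $H^{1/2}(\Gamma_D)$-norm can then be localized edge by edge (e.g.\ via a Faermann-type colouring or the Sobolev--Slobodeckij double integral) and combined with a scaled Poincar\'e inequality on each edge. Your decomposition through the Scott--Zhang operator discards exactly this nodal-vanishing structure, and this is where the argument breaks.

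The concrete gap is the claimed \emph{inverse-type estimate}
$\norm{w}{H^{1/2}(\Gamma_D)}\lesssim\norm{h_\ell^{1/2}w'}{L^2(\Gamma_D)}$
for $w\in\SS^1(\EE_\ell^D)$: this inequality is false as stated. It fails already for $w\equiv 1$, where the right-hand side is zero but $\norm{w}{H^{1/2}(\Gamma_D)}>0$; and more structurally it runs in the \emph{wrong direction} relative to genuine inverse estimates (the known estimate, cf.\ \cite{ghs}, is $\norm{h_\ell^{1/2}w'}{L^2(\Gamma_D)}\lesssim\norm{w}{H^{1/2}(\Gamma_D)}$). Since $w=Pg-g_\ell$ has no reason to be orthogonal to constants, this step cannot be repaired by a mean-value normalization. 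A secondary weakness is the first summand: on a locally refined mesh, the $K$-functional interpolation between $L^2(\Gamma_D)$ and $H^1(\Gamma_D)$ produces a \emph{global} bound of the type $\norm{g-Pg}{H^{1/2}}\lesssim\norm{g-Pg}{L^2}^{1/2}\norm{(g-Pg)'}{L^2}^{1/2}$, from which the locally weighted bound $\norm{h_\ell^{1/2}(g-Pg)'}{L^2(\Gamma_D)}$ does not follow without an additional localization argument; naive interpolation only pulls the weight inside on quasi-uniform meshes. Both difficulties disappear if you abandon the Scott--Zhang split and instead work directly with $g-g_\ell$ and its nodal zeros, which is the route taken in \cite{efgp}.
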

}

To keep the notation simple, we extend the Dirichlet and the Neumann data oscillations from~\eqref{eq:osc:neu}--\eqref{eq:osc:dir} by zero to all edges $E\in\EE_\ell$, e.g.\ $\oscD{\ell}(E) = 0$ for $E\in\EE_\ell\backslash\EE_\ell^D$.
Moreover, we will write
\begin{align}
\label{eq:defonpatch}
 \oscT\ell(\omega_{\ell,z})^2 = \sum_{{T\in\TT_\ell}\atop{T\subset\omega_{\ell,z}}}\oscT{\ell}(T)^2
 \quad\text{resp.}\quad
 \oscN{\ell}(\EE_{\ell,z})^2 = \sum_{{E\in\EE_\ell^N}\atop{E\subset\EE_{\ell,z}}}\oscN{\ell}(E)^2
\end{align}
to abbreviate the notation.

\subsection{Element-based residual error estimator}
\label{section:estimator:T}%
Our first proposition states reliability and efficiency of the error estimator $\rho_\ell$ from~\eqref{eq1:estimator:T}--\eqref{eq2:estimator:T}.

\begin{proposition}[reliability and efficiency of $\rho_\ell$]
\label{prop:reliability:rho}
The error estimator $\rho_\ell$ is reliable
\begin{align}\label{eq:rho:reliable}
 \norm{u - U_\ell}{H^1(\Omega)}
 \le \c{rho:reliable}\,\rho_\ell
\end{align}
and efficient
\begin{align}\label{eq:rho:efficient}
 \c{rho:efficient}^{-1}\,\rho_\ell
 \le \big(\norm{\nabla(u-U_\ell)}{L^2(\Omega)}^2 + \oscT{\ell}^2 +\oscN{\ell}^2 + \oscD{\ell}^2\big)^{1/2}.
\end{align}
The constants $\setc{rho:reliable},\setc{rho:efficient}>0$ depend
only on the shape regularity constant $\sigma(\TT_\ell)$ and on $\Omega$.
\end{proposition}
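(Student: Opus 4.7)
The plan is to treat reliability and efficiency separately, with the Dirichlet data contribution handled by the lifting technique together with Lemma~\ref{lemma:apx}, and the interior/Neumann contributions handled by the standard residual argument of \cite{verfuerth}.

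\textbf{Reliability.} Since $|\Gamma_D|>0$, a Friedrichs/Poincar\'e-type inequality yields
$\norm{u-U_\ell}{H^1(\Omega)} \lesssim \norm{\nabla(u-U_\ell)}{L^2(\Omega)} + \norm{g-g_\ell}{H^{1/2}(\Gamma_D)}$,
so it suffices to bound both summands by $\rho_\ell$. For the trace term, Lemma~\ref{lemma:apx} gives $\norm{g-g_\ell}{H^{1/2}(\Gamma_D)} \le \c{apx}\,\oscD{\ell} \le \c{apx}\,\rho_\ell$, because the Dirichlet oscillation is directly contained in $\rho_\ell^2$. For the gradient error, I would pick a continuous lifting $L\in H^1(\Omega)$ with $L|_{\Gamma_D}=g-g_\ell$ and $\normHe{L}{\Omega}\lesssim \norm{g-g_\ell}{H^{1/2}(\Gamma_D)} \lesssim \oscD{\ell}$, and write $v:=u-U_\ell-L\in H^1_D(\Omega)$. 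Testing with $v$ and using the weak form yields
\begin{align*}
 \norm{\nabla v}{L^2(\Omega)}^2 = \dual{\nabla(u-U_\ell)}{\nabla v}_\Omega - \dual{\nabla L}{\nabla v}_\Omega.
\end{align*}
Because $P_\ell v \in \SS^1_D(\TT_\ell)$ (recall that $P_\ell$ preserves discrete Dirichlet data), the Galerkin orthogonality~\eqref{eq:galerkin} gives $\dual{\nabla(u-U_\ell)}{\nabla P_\ell v}_\Omega = 0$. Inserting $-P_\ell v$ in the first term, applying~\eqref{eq:weakform}, and integrating by parts elementwise (noting $\Delta U_\ell=0$ on each $T\in\TT_\ell$) produces the standard residual representation
\begin{align*}
 \dual{\nabla(u-U_\ell)}{\nabla v}_\Omega
 = \sum_{T\in\TT_\ell}\dual{f}{v-P_\ell v}_T
 - \sum_{E\in\EE_\ell^\Omega}\dual{[\partial_n U_\ell]}{v-P_\ell v}_E
 + \dual{\phi-\partial_n U_\ell}{v-P_\ell v}_{\Gamma_N}.
\end{align*}
Cauchy-Schwarz and the Scott--Zhang estimates~\eqref{eq:scottzhangstability}--\eqref{eq:scottzhangapprox} (elementwise: $\norm{v-P_\ell v}{L^2(T)}\lesssim |T|^{1/2}\norm{\nabla v}{L^2(\omega_T)}$ and trace: $\norm{v-P_\ell v}{L^2(E)}\lesssim |T|^{1/4}\norm{\nabla v}{L^2(\omega_E)}$) then bound the right-hand side by $\c{}\,\rho_\ell\,\norm{\nabla v}{L^2(\Omega)}$. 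Combining with the trivial bound $|\dual{\nabla L}{\nabla v}|\le \norm{\nabla L}{L^2(\Omega)}\norm{\nabla v}{L^2(\Omega)}$ and dividing by $\norm{\nabla v}{L^2(\Omega)}$ yields $\norm{\nabla v}{L^2(\Omega)} \lesssim \rho_\ell + \oscD{\ell}\lesssim \rho_\ell$, hence $\norm{\nabla(u-U_\ell)}{L^2(\Omega)}\lesssim \rho_\ell$.

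\textbf{Efficiency.} This follows from the classical Verf\"urth bubble-function technique~\cite{verfuerth}, applied termwise. Using an interior bubble on $T$ gives $|T|^{1/2}\norm{f_T}{L^2(T)} \lesssim \norm{\nabla(u-U_\ell)}{L^2(T)}$, hence $|T|^{1/2}\norm{f}{L^2(T)} \lesssim \norm{\nabla(u-U_\ell)}{L^2(T)} + \oscT\ell(T)$. An edge bubble on $E\in\EE_\ell^\Omega$ gives $|T|^{1/4}\norm{[\partial_nU_\ell]}{L^2(E)} \lesssim \norm{\nabla(u-U_\ell)}{L^2(\omega_{\ell,E})} + \oscT\ell(\omega_{\ell,E})$, and a Neumann bubble analogously gives $|T|^{1/4}\norm{\phi-\partial_n U_\ell}{L^2(E)} \lesssim \norm{\nabla(u-U_\ell)}{L^2(\omega_{\ell,E})} + \oscT\ell(\omega_{\ell,E}) + \oscN{\ell}(E)$. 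The Dirichlet contribution $|T|^{1/2}\norm{(g-g_\ell)'}{L^2(\partial T\cap\Gamma_D)}^2$ is, up to shape regularity, exactly $\oscD{\ell}(E)^2$ summed over $E\subset\partial T\cap\Gamma_D$, so it is trivially dominated by $\oscD{\ell}^2$. Summing all local efficiency bounds and using bounded overlap of the patches yields~\eqref{eq:rho:efficient}.

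\textbf{Main obstacle.} The genuinely nonstandard ingredient is the Dirichlet contribution to reliability: one must bound the nonlocal $H^{1/2}$-norm of $g-g_\ell$ by the \emph{local} quantity $\oscD{\ell}$. This is precisely where Lemma~\ref{lemma:apx} (which in turn relies on the orthogonality~\eqref{eq:nodal:orthogonality} peculiar to nodal interpolation in 1D) enters, and it is the only point where the technique departs from the classical homogeneous-Dirichlet residual proof.
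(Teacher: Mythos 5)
Your proposal is correct and follows essentially the same route as the paper's sketch: lift the Dirichlet error $g-g_\ell$ into $H^1(\Omega)$ (the paper uses the harmonic extension $w$, you use a generic bounded lifting $L$, but this is immaterial), bound the lifting by $\oscD{\ell}$ via Lemma~\ref{lemma:apx}, handle the $H^1_D$ remainder by Galerkin orthogonality together with Cl\'ement/Scott--Zhang approximation estimates, and obtain efficiency by the standard bubble-function technique. The only cosmetic slip is the intermediate line $|T|^{1/2}\norm{f_T}{L^2(T)}\lesssim\norm{\nabla(u-U_\ell)}{L^2(T)}$, which already needs an $\oscT{\ell}(T)$ on the right; your next line has it, so the conclusion is unaffected.
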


\begin{proof}[Sketch of proof]
We consider a continuous auxiliary problem
\begin{align}
\begin{split}
 -\Delta w &= 0 \hspace*{12.8mm} \text{in } \Omega,\\
 w &= g-g_\ell \quad \text{on } \Gamma_D,\\
 \partial_n w &= 0 \hspace*{12.8mm} \text{on } \Gamma_N,
\end{split}
\end{align}
with unique solution $w\in H^1(\Omega)$. We then have norm equivalence $\norm{w}{H^1(\Omega)}\simeq\norm{g-g_\ell}{H^{1/2}(\Gamma_D)}$ as well as $u-U_\ell-w\in H^1_D(\Omega)$. From this, we obtain
\begin{align*}
 \norm{u-U_\ell}{H^1(\Omega)}^2
 \lesssim \norm{\nabla(u-U_\ell-w)}{L^2(\Omega)}^2 + \norm{g-g_\ell}{H^{1/2}(\Gamma_D)}^2.
\end{align*}
Whereas the second term is controlled by Lemma~\ref{lemma:apx}, the first can be handled as for homogeneous Dirichlet data, i.e. use of the Galerkin orthogonality combined with approximation estimates for a Cl\'ement-type quasi-interpolation operator. Details are found e.g.\ in~\cite{bcd}. This proves reliability~\eqref{eq:rho:reliable}.

By use of bubble functions and local scaling arguments, one obtains the estimates
\begin{align*}
 |T|\,\norm{f}{L^2(T)}^2
 &\lesssim \norm{\nabla(u-U_\ell)}{L^2(T)}^2 + \oscT{\ell}(T)^2+\oscN{\ell}(\partial T\cap \Gamma_N),\\
 |T|^{1/2}\,\norm{[\partial_nU_\ell]}{L^2(E\cap\Omega)}^2
 &\lesssim \norm{\nabla(u-U_\ell)}{L^2(\omega_{\ell,E})}^2 + \oscT{\ell}(\omega_{\ell,E})^2\\
 |T|^{1/2}\,\norm{\phi-\partial_nU_\ell}{L^2(E\cap\Gamma_N)}^2
 &\lesssim \norm{\nabla(u-U_\ell)}{L^2(\omega_{\ell,E})}^2 + \oscT{\ell}(\omega_{\ell,E})^2+\oscN{\ell}(E\cap\Gamma_N)^2 \,
\end{align*}
where $\omega_{\ell,E}$ denotes the edge patch of $E\in\EE_\ell$.
Details are found e.g.\ in~\cite{ao,verfuerth}.
Summing these estimates over all elements, one obtains the efficiency estimate~\eqref{eq:rho:efficient}.
\end{proof}

\begin{proposition}[discrete local reliability of $\rho_\ell$]
\label{prop:dlr:rho}
Let $\TT_* = \refine(\TT_\ell)$ be an arbitrary refinement of $\TT_\ell$ with associated Galerkin solution $U_*\in\SS^1(\TT_*)$. Let $\RR_\ell(\TT_*):=\TT_\ell\backslash\TT_*$ be
the set of all elements $T\in\TT_\ell$ which are refined to generate $\TT_*$. Then, there holds
\begin{align}\label{eq:dlr:rho}
 \norm{U_*-U_\ell}{H^1(\Omega)}
 \le \c{rho:dlr}\,\rho_\ell(\RR_\ell(\TT_*))
\end{align}
with some constant $\setc{rho:dlr}>0$ which depends only on $\sigma(\TT_\ell)$ and $\Omega$.
\end{proposition}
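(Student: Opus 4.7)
The plan is to decompose $V := U_*-U_\ell \in \SS^1(\TT_*)$ into an interior part with homogeneous Dirichlet trace and a discrete lifting of its boundary data, and then to estimate each contribution in terms of $\rho_\ell(\RR_\ell(\TT_*))$ separately.

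First, set $\widetilde g := g_*-g_\ell$ and choose a discrete lifting $Z \in \SS^1(\TT_*)$ with $Z|_{\Gamma_D} = \widetilde g$, obtained via the construction from~\eqref{eq:discreteLifting}; then $W := V - Z \in \SS^1_D(\TT_*)$, and the triangle inequality combined with Friedrichs' inequality applied to $W$ yields $\norm{V}{H^1(\Omega)} \lesssim \norm{\nabla W}{L^2(\Omega)} + \norm{Z}{H^1(\Omega)}$. The key observation for the lifting term is that $\widetilde g$ vanishes on every Dirichlet edge common to $\EE_\ell^D$ and $\EE_*^D$: on such an unrefined edge, $g_\ell$ and $g_*$ are both affine and coincide at the two endpoints. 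Hence the tangential derivative of $\widetilde g$ is supported on the refined Dirichlet edges $\EE_\ell^D\setminus\EE_*^D$, and the boundedness of the discrete lifting together with a localized $H^{1/2}$-estimate as in the proof of Lemma~\ref{lemma:apx} (respectively~\cite{efgp}) gives
\begin{align*}
 \norm{Z}{H^1(\Omega)}^2 \lesssim \norm{\widetilde g}{H^{1/2}(\Gamma_D)}^2 \lesssim \sum_{E\in\EE_\ell^D\setminus\EE_*^D} |E|\,\norm{\widetilde g^{\,\prime}}{L^2(E)}^2.
\end{align*}
Applying the elementwise Pythagoras identity~\eqref{eq:nodal:orthogonality} on each $\TT_*$-subedge of a refined $E\in\EE_\ell^D\setminus\EE_*^D$ then shows $\norm{\widetilde g^{\,\prime}}{L^2(E)}\le\norm{(g-g_\ell)^\prime}{L^2(E)}$, so this sum is controlled by the Dirichlet contribution of $\rho_\ell(\RR_\ell(\TT_*))$.

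For the remaining term $\norm{\nabla W}{L^2(\Omega)}$, I would test the Galerkin equation~\eqref{eq:galerkin} for $U_*$ with $W \in \SS^1_D(\TT_*)$ and subtract the corresponding equation for $U_\ell$ tested with $W_\ell := P_\ell W$, where $P_\ell$ is the Scott-Zhang operator. Since $W|_{\Gamma_D}=0$ and $(P_\ell W)|_{\Gamma_D} = P_\ell^D(W|_{\Gamma_D}) = 0$, one has $W_\ell \in \SS^1_D(\TT_\ell)$. Exploiting $\Delta U_\ell|_T = 0$ and $(W-W_\ell)|_{\Gamma_D}=0$, elementwise integration by parts produces the residual representation
\begin{align*}
\dual{\nabla V}{\nabla W}_\Omega = \dual{f}{W-W_\ell}_\Omega - \sum_{E\in\EE_\ell^\Omega}\dual{[\partial_n U_\ell]}{W-W_\ell}_E + \sum_{E\in\EE_\ell^N}\dual{\phi-\partial_n U_\ell}{W-W_\ell}_E.
\end{align*}
Because $W$ is $\TT_*$-piecewise affine and $P_\ell$ reproduces any $\TT_\ell$-piecewise affine function, $W-W_\ell$ vanishes on every $T\in\TT_\ell$ whose patch $\omega_{\ell,T}$ contains no element of $\RR_\ell(\TT_*)$. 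Combined with the local approximation and stability estimates~\eqref{eq:scottzhangstability}--\eqref{eq:scottzhangapprox} and Cauchy--Schwarz, this yields $|\dual{\nabla V}{\nabla W}_\Omega| \lesssim \rho_\ell(\RR_\ell(\TT_*))\,\norm{\nabla W}{L^2(\Omega)}$. Writing $\norm{\nabla W}{L^2(\Omega)}^2 = \dual{\nabla V}{\nabla W}_\Omega - \dual{\nabla Z}{\nabla W}_\Omega$ and combining with the previously established bound on $\norm{\nabla Z}{L^2(\Omega)}$ then closes the estimate.

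The step I expect to be the main obstacle is the localized $H^{1/2}$ bound on $\widetilde g$: the non-locality of the $H^{1/2}$-norm prevents a naive edgewise summation, and transferring the trace quantity into an $\ell$-level edge contribution supported only on the refined Dirichlet edges relies crucially on the nodal-interpolation orthogonality~\eqref{eq:nodal:orthogonality} together with the adaptive-BEM trace techniques from the cited literature.
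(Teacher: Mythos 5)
Your decomposition $U_*-U_\ell = Z + W$ with $Z$ a discrete lifting of $g_*-g_\ell$ and $W\in\SS^1_D(\TT_*)$, together with the localized $H^{1/2}$-bound on $Z$ via Lemma~\ref{lemma:apx} and the nodal-interpolation orthogonality~\eqref{eq:nodal:orthogonality}, is structurally the same as the paper's argument. The paper picks $W_*$ to be the \emph{discrete harmonic extension} of $g_*-g_\ell$ rather than a general lifting; this has the cosmetic advantage that $U_*-U_\ell-W_*$ is Galerkin-orthogonal to $\SS^1_D(\TT_\ell)$, so~\cite[Lemma~3.6]{ckns} applies verbatim, whereas your general $Z$ forces you to carry the cross term $\dual{\nabla Z}{\nabla W}_\Omega$ explicitly. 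Both routes close, and your treatment of the boundary term is correct. The obstacle you flag -- the nonlocality of the $H^{1/2}$-norm -- is in fact already resolved by Lemma~\ref{lemma:apx} applied to $g_*$ (noting $g_\ell$ is the nodal interpoland of $g_*$ on $\TT_\ell$), exactly as you describe.

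The genuine gap lies elsewhere: you use the Scott-Zhang operator $P_\ell$ to build the coarse test function $W_\ell=P_\ell W$. As you correctly observe, $W - P_\ell W$ vanishes only on those $T\in\TT_\ell$ whose \emph{patch} $\omega_\ell(T)$ contains no refined element, because $(P_\ell W)|_T$ depends on $W|_{E_{z}}$ for the three vertices $z$ of $T$ and those dual edges lie in the patch, not in $T$. Consequently your residual sum runs over the \emph{patch-neighborhood} $\RR_\ell^+:=\set{T\in\TT_\ell}{\omega_\ell(T)\cap\bigcup\RR_\ell(\TT_*)\neq\emptyset}$, not over $\RR_\ell(\TT_*)$ itself, and what you actually obtain is $\norm{U_*-U_\ell}{H^1(\Omega)}\lesssim\rho_\ell(\RR_\ell^+)$. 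This is strictly weaker than the stated bound $\rho_\ell(\RR_\ell(\TT_*))$. The fix, which is the point of~\cite[Lemma~3.6]{ckns}, is to test with the \emph{nodal Lagrange interpolant} $W_\ell=\Pi_\ell W$ instead: since $W\in\SS^1(\TT_*)$ is continuous and $\TT_*$-piecewise affine, $\Pi_\ell W$ is well-defined, it reproduces $W$ elementwise on every unrefined $T\in\TT_\ell\cap\TT_*$ (because there $W|_T$ is already affine), and it still enjoys the purely local estimate $\norm{W-\Pi_\ell W}{L^2(T)}\lesssim |T|^{1/2}\norm{\nabla W}{L^2(T)}$. This localizes the residual sum to exactly $\RR_\ell(\TT_*)$, which is what the proposition asserts. (The neighborhood bound would still suffice for the later complexity estimates since $\#\RR_\ell^+\lesssim\#\RR_\ell(\TT_*)$, but it does not prove the statement as written.)
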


\begin{proof}
We consider a discrete auxiliary problem
\begin{align*}
 \dual{\nabla W_*}{\nabla V_*}_\Omega = 0
 \quad\text{for all }V_*\in\SS^1_D(\TT_*)
\end{align*}
with unique solution $W_*\in\SS^1(\TT_*)$ with $W_*|_{\Gamma_D}=g_*-g_\ell$.
To estimate the $H^1$-norm of $W_*$ in terms of the boundary data,
let $\LL_*\,:H^{1/2}(\Gamma) \to \SS^1(\TT_*)$ denote the discrete lifting operator from~\eqref{eq:discreteLifting}. Let $\widehat g_*,\widehat g_\ell\in H^{1/2}(\Gamma)$
be arbitrary extensions of $g_*$ and $g_\ell$, respectively.
Then, we have $V_*=W_* - \LL_*(\widehat g_*-\widehat g_\ell)\in\SS^1_D(\TT_*)$.
According to the triangle inequality and a Poincar\'e inequality for $V_*\in\SS^1_D(\TT_*)$, we first observe
\begin{align*}
 \norm{W_*}{L^2(\Omega)}
 &\le \norm{V_*}{L^2(\Omega)}
 + \norm{\LL_*(\widehat g_*-\widehat g_\ell)}{L^2(\Omega)}\\
 &\lesssim \norm{\nabla V_*}{L^2(\Omega)}
 + \norm{\LL_*(\widehat g_*-\widehat g_\ell)}{L^2(\Omega)}\\
 & \lesssim \norm{\nabla W_*}{L^2(\Omega)} + \norm{\LL_*(\widehat g_*-\widehat g_\ell)}{H^1(\Omega)}.
\end{align*}
Moreover, the variational formulation for $W_*\in\SS^1(\TT_*)$ yields
\begin{align*}
 0
 = \dual{\nabla W_*}{\nabla V_*}_\Omega
 = \norm{\nabla W_*}{L^2(\Omega)}^2
 - \dual{\nabla W_*}{\nabla\LL_*(\widehat g_*-\widehat g_\ell)}_\Omega,
\end{align*}
whence by the Cauchy-Schwarz inequality
\begin{align*}
 \norm{\nabla W_*}{L^2(\Omega)}
 \le \norm{\nabla\LL_*(\widehat g_*-\widehat g_\ell)}{L^2(\Omega)}
 \lesssim \norm{\widehat g_*-\widehat g_\ell}{H^{1/2}(\Gamma)}.
\end{align*}
Altogether, this proves $\norm{W_*}{H^1(\Omega)}\lesssim \norm{\widehat g _*-\widehat g_\ell}{H^{1/2}(\Gamma)}$. Since the extensions $\widehat g_*,\widehat g_\ell$ were arbitrary and by definition of the $H^{1/2}(\Gamma_D)$-norm, this proves
\begin{align}\label{dpr1:rho}
 \norm{W_*}{H^1(\Omega)}
 \lesssim \norm{g_*-g_\ell}{H^{1/2}(\Gamma_D)}
 \lesssim \norm{h_\ell^{1/2}(g_*-g_\ell)'}{L^2(\Gamma_D)},
\end{align}
where we have finally used that $g_\ell$ is also the nodal interpoland of $g_*$ so that Lemma~\ref{lemma:apx} applies.
For an element $T\in\TT_\ell\cap\TT_*$ holds $g_*|_{\partial T\cap\Gamma_D} = g_\ell|_{\partial T\cap\Gamma_D}$, and the last term thus satisfies
\begin{align*}
 \norm{h_\ell^{1/2}(g_*-g_\ell)'}{L^2(\Gamma_D)}^2
 \simeq
 \sum_{T\in\TT_\ell} |T|^{1/2}\norm{(g_*-g_\ell)'}{L^2(\partial T\cap\Gamma_D)}^2
 &=\!\! \sum_{T\in\RR_\ell(\TT_*)} \!\! |T|^{1/2}\norm{(g_*-g_\ell)'}{L^2(\partial T\cap\Gamma_D)}^2.
\end{align*}
With the orthogonality relation~\eqref{eq:nodal:orthogonality} applied for $g_*\in\SS^1(\TT_*|_{\Gamma_D})$, we see
\begin{align*}
 \norm{W_*}{H^1(\Omega)}^2
 \lesssim \sum_{T\in\RR_\ell(\TT_*)} \!\!|T|^{1/2}\norm{(g_*-g_\ell)'}{L^2(\partial T\cap\Gamma_D)}^2
 \le \sum_{T\in\RR_\ell(\TT_*)} \!\! |T|^{1/2}\norm{(g-g_\ell)'}{L^2(\partial T\cap\Gamma_D)}^2.
\end{align*}
Finally, we observe $U_*-U_\ell-W_*\in\SS^1_D(\TT_*)$ with
\begin{align*}
 \dual{\nabla(U_*-U_\ell-W_*)}{\nabla V_\ell} = 0
 \quad\text{for all }V_\ell\in\SS^1_D(\TT_\ell).
\end{align*}
Arguing as in~\cite[Lemma~3.6]{ckns}, we see
\begin{align*}
 &\norm{\nabla(U_*-U_\ell-W_*)}{L^2(\Omega)}^2
 \\&\quad
 \lesssim \sum_{T\in\RR_\ell(\TT_*)}\big(|T|\,\norm{f}{L^2(T)}^2
 + |T|^{1/2}\,\norm{[\partial_nU_\ell]}{L^2(\partial T\cap\Omega)}^2
 + |T|^{1/2}\,\norm{\phi-\partial_nU_\ell}{L^2(\partial T\cap\Gamma_N)}^2\big)
\end{align*}
Finally, we again use the triangle inequality and the Poincar\'e inequality to see
\begin{align*}
 \norm{U_*-U_\ell}{H^1(\Omega)}^2
 \lesssim \norm{W_*}{H^1(\Omega)}^2 + \norm{\nabla(U_*-U_\ell-W_*)}{L^2(\Omega)}^2
\end{align*}
and thus obtain the discrete local reliability~\eqref{eq:dlr:rho}.
The constant $\c{rho:dlr}>0$ depends only on $\c{apx}>0$ and on local estimates for the Scott-Zhang projection which are controlled by boundedness of $\sigma(\TT_\ell)$.
\end{proof}
\subsection{Edge-based residual error estimator}
\label{section:estimator:E}%
In the following, 
we show that the edge-based estimator $\varrho_\ell$ from~\eqref{eq1:estimator:E}--\eqref{eq2:estimator:E} is locally equivalent
to the element-based error estimator $\rho_\ell$ from the previous section. The main advantage is that $\varrho_\ell$ replaces the volume residuals
\begin{align}
 \res_\ell(T):=|T|\,\norm{f}{L^2(T)}
\end{align}
by the edge oscillations $\osc\ell$.
We define the edge jump contributions
\begin{align}
 \eta_\ell(E)^2 := \begin{cases}
  |E|\,\norm{[\partial_nU_\ell]}{L^2(E)}^2
  \quad&\text{for }E\in\EE_\ell^\Omega,\\
  |E|\,\norm{\phi-\partial_nU_\ell}{L^2(E)}^2
  \quad&\text{for }E\in\EE_\ell^N
 \end{cases}
\end{align}
where $[\cdot]$ denotes the jump across an interior edge.
Together with the edge oscillations from~\eqref{eq:osc:edge} and the Dirichlet oscillations from~\eqref{eq:osc:dir}, our version of the residual error estimator from~\eqref{eq1:estimator:E}--\eqref{eq2:estimator:E} reads
\begin{align}
 \varrho_\ell^2
 = \sum_{E\in\EE_\ell}\varrho_\ell(E)^2 = \sum_{E\in\EE_\ell^\Omega\cup\EE_\ell^N}\eta_\ell(E)^2
 + \sum_{E\in\EE_\ell^\Omega}\osc{\ell}(E)^2 + \sum_{E\in\EE_\ell^D}\oscD{\ell}(E)^2.
\end{align}%
Note that $\osc{\ell}(\EE_{\ell,z})$, $\eta_\ell(\EE_{\ell,z})$, and $\res_\ell(\omega_{\ell,E})$ are defined analogously to~\eqref{eq:defonpatch}.
The following lemma implies local equivalence of the estimators $\rho_\ell$ and $\varrho_\ell$.

\begin{lemma}\label{lemma:local}
The following local estimates hold:
\begin{itemize}
\item[{\rm(i)}] $\oscT{\ell}(\omega_{\ell,E}) \le \osc{\ell}(E) \le \c{eq1} \res_\ell(\omega_{\ell,E})$ for all $E\in\EE_\ell^\Omega$.
\item[{\rm(ii)}]
$\res_\ell(\omega_{\ell,z}) \le \c{eq2} \big(\eta_\ell(\EE_{\ell,z})
+ \oscK{\ell}(z)\big)$ for all $z\in\KK_\ell^\Omega$.
\item[{\rm(iii)}]
$\c{eq3}^{-1}\,\osc{\ell}(\EE_{\ell,z}) \le \oscK{\ell}(z) \le \c{eq4}\,\osc{\ell}(\EE_{\ell,z})$ for all $z\in\KK_\ell^\Omega$.%
\end{itemize}
The constants $\setc{eq1},\setc{eq2},\setc{eq3}>0$ depend only on the shape regularity constant $\sigma(\TT_\ell)$, whereas $\setc{eq4}>0$ depends on the use of newest vertex bisection and the initial mesh $\TT_0$.
\end{lemma}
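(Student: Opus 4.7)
The plan is to verify (i)--(iii) in turn. Parts (i) and the upper bound in (iii) reduce to elementary $L^2$ best-approximation estimates. Part (ii) requires a bubble-function testing of the Galerkin equation~\eqref{eq:galerkin}, and the reverse bound in (iii) is the technically most delicate one --- it is the only inequality whose constant depends on the initial mesh and on the specific refinement rule.

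For part (i), both bounds are elementary. The first, $\oscT{\ell}(\omega_{\ell,E})\le\osc{\ell}(E)$, combines $|T|\le|\omega_{\ell,E}|$ for all $T\subset\omega_{\ell,E}$ with the $L^2$-best-approximation property of the integral mean, namely $\sum_{T\subset\omega_{\ell,E}}\norm{f-f_T}{L^2(T)}^2\le\norm{f-f_{\omega_{\ell,E}}}{L^2(\omega_{\ell,E})}^2$. The second, $\osc{\ell}(E)\lesssim\res_\ell(\omega_{\ell,E})$, uses $\norm{f-f_{\omega_{\ell,E}}}{L^2(\omega_{\ell,E})}\le\norm{f}{L^2(\omega_{\ell,E})}$ together with shape regularity $|\omega_{\ell,E}|\simeq|T|$. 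The upper bound in (iii) follows analogously after replacing $f_{\omega_{\ell,E}}$ by $f_{\omega_{\ell,z}}$ and summing over $E\in\EE_{\ell,z}$; shape regularity controls the cardinality $|\EE_{\ell,z}|$ and the overlap of the edge patches within the node patch $\omega_{\ell,z}$.

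For part (ii), I would use the nodal hat function $\varphi_z\in\SS^1(\TT_\ell)$. Since $z\in\KK_\ell^\Omega$ is an interior node, $\mathrm{supp}(\varphi_z)\subset\omega_{\ell,z}$ does not touch $\Gamma$, so $\varphi_z\in\SS^1_D(\TT_\ell)$ is an admissible test function. Setting $c:=f_{\omega_{\ell,z}}$ and testing~\eqref{eq:galerkin} with $V_\ell=c\,\varphi_z$, elementwise integration by parts together with $\Delta U_\ell|_T=0$ and the vanishing of $\varphi_z$ on $\Gamma$ yields
\begin{align*}
c^2\int_{\omega_{\ell,z}}\varphi_z\,dx
 = c\sum_{E\in\EE_{\ell,z}}\int_E[\partial_nU_\ell]\,\varphi_z\,ds - c\int_{\omega_{\ell,z}}(f-c)\,\varphi_z\,dx.
\end{align*}
Since $\int\varphi_z\,dx\simeq|\omega_{\ell,z}|$ and $\norm{\varphi_z}{L^\infty}\le 1$, the Cauchy--Schwarz and Young inequalities on the right-hand side deliver $|c|\,|\omega_{\ell,z}|\lesssim\eta_\ell(\EE_{\ell,z})+\oscK{\ell}(z)$. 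Squaring and combining with the elementwise triangle inequality $\norm{f}{L^2(T)}^2\le 2c^2|T|+2\norm{f-c}{L^2(T)}^2$, summed under the shape-regularity identity $|T|\simeq|\omega_{\ell,z}|$, then proves (ii).

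The main obstacle is the reverse inequality in (iii), $\oscK{\ell}(z)\lesssim\osc{\ell}(\EE_{\ell,z})$. Fixing any reference edge $E_0\in\EE_{\ell,z}$, best approximation gives $\oscK{\ell}(z)^2\le|\omega_{\ell,z}|\norm{f-f_{\omega_{\ell,E_0}}}{L^2(\omega_{\ell,z})}^2$, which requires controlling $\norm{f-f_{\omega_{\ell,E_0}}}{L^2(T)}^2$ for every element $T\subset\omega_{\ell,z}$. The idea is to connect $T$ to $\omega_{\ell,E_0}$ by a chain $T=T_0,T_1,\dots,T_n\subset\omega_{\ell,E_0}$ of neighbours sharing edges $E_1,\dots,E_n\in\EE_{\ell,z}$ and to telescope the triangle inequality along the chain; each transition across a shared edge $E_i$ contributes $|f_{\omega_{\ell,E_i}}-f_{\omega_{\ell,E_{i+1}}}|^2\,|T|$, which is in turn bounded by $\osc{\ell}(E_i)^2+\osc{\ell}(E_{i+1})^2$ via the triangle inequality on $\omega_{\ell,E_i}\cap\omega_{\ell,E_{i+1}}$. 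The crucial point, and the reason for the dependence of $\c{eq4}$ on $\TT_0$ and on newest vertex bisection, is that NVB admits only finitely many combinatorial shapes of node patches, so both the number of chains required and their lengths are bounded by a constant depending only on $\TT_0$.
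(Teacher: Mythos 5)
Your proposal is correct. For~(i) and the left inequality in~(iii) you argue exactly as the paper does (best approximation by the integral mean plus shape regularity of the areas), and for~(ii) you simply spell out the standard hat-function efficiency argument that the paper delegates to a citation of~\cite{ks}; since $z$ is interior, $\varphi_z\in\SS^1_D(\TT_\ell)$, and testing~\eqref{eq:galerkin} with $c\,\varphi_z$ after elementwise integration by parts ($\Delta U_\ell|_T=0$) indeed yields $|c|\,|\omega_{\ell,z}|\lesssim\eta_\ell(\EE_{\ell,z})+\oscK{\ell}(z)$, which combines with the elementwise triangle inequality to give~(ii).

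Where you genuinely diverge from the paper is the right inequality in~(iii), $\oscK{\ell}(z)\lesssim\osc{\ell}(\EE_{\ell,z})$. The paper first proves it for $\TT_\ell$-piecewise polynomial $f$ by an equivalence-of-seminorms argument on the finite-dimensional space $\PP^p$ (both sides vanish exactly on constants, and NVB produces only finitely many patch geometries, so the ratio of the two seminorms is uniformly bounded), and then extends it to general $f\in L^2$ via the Pythagoras identity for the $\TT_\ell$-piecewise integral mean $f_\ell$. Your chain/telescoping argument avoids this two-step structure entirely: you fix a reference edge $E_0\in\EE_{\ell,z}$, bound $\oscK{\ell}(z)^2$ by $|\omega_{\ell,z}|\norm{f-f_{\omega_{\ell,E_0}}}{L^2(\omega_{\ell,z})}^2$ via best approximation, and then walk around $z$ from $E_0$ to any other edge, picking up one term $\osc{\ell}(E_i)^2+\osc{\ell}(E_{i+1})^2$ per transition across a shared triangle. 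This works in one pass for arbitrary $f\in L^2$, and the constant in the end only requires that the cardinality of $\EE_{\ell,z}$ and the area ratios $|\omega_{\ell,z}|/|T_i|$, $|\omega_{\ell,z}|/|\omega_{\ell,E_i}|$ are uniformly bounded, which is already a consequence of shape regularity; you do not actually need the stronger "finitely many combinatorial patch shapes" observation that the paper's scaling argument rests on, although both are consequences of NVB and $\TT_0$. One small bookkeeping slip: your displayed claim $|f_{\omega_{\ell,E_i}}-f_{\omega_{\ell,E_{i+1}}}|^2\,|T|\lesssim\osc{\ell}(E_i)^2+\osc{\ell}(E_{i+1})^2$ is dimensionally off by one area factor; what you in fact get from the triangle inequality on $T_i=\omega_{\ell,E_i}\cap\omega_{\ell,E_{i+1}}$ is $|f_{\omega_{\ell,E_i}}-f_{\omega_{\ell,E_{i+1}}}|^2\lesssim |T_i|^{-1}\bigl(\osc{\ell}(E_i)^2/|\omega_{\ell,E_i}|+\osc{\ell}(E_{i+1})^2/|\omega_{\ell,E_{i+1}}|\bigr)$, and the missing factor $|\omega_{\ell,z}|$ that you must still multiply in at the end is exactly what restores the right scaling, so the argument closes. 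Finally, your labelling of which bound in~(iii) is "upper" versus "lower" is inverted relative to the paper's terminology, but the mathematics you describe is unambiguous and correct.
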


\begin{proof}[Sketch of proof]
The proof of (i) follows from the fact that taking the integral mean $f_\omega$ is the $L^2$ best approximation by a constant, i.e.
\begin{align*}
 \norm{f-f_\omega}{L^2(\omega)} = \min_{c\in\R}\norm{f-c}{L^2(\omega)}
 \quad\text{for all measurable }\omega\subseteq\Omega,
\end{align*}
and that the area of neighboring elements can only change up to $\sigma(\TT_\ell)$. The estimate (ii) is well-known and found, e.g., in~\cite[Section 2.2.4]{ks}. Note that
(ii) essentially needs the condition that each element $T\in\TT_\ell$ has an interior node, cf. Section~\ref{sec:notation}. The lower estimate in (iii) follows from the same arguments as (i), namely
\begin{align*}
 \norm{f-f_{\omega_{\ell,E}}}{L^2(\omega_{\ell,E})}
 \le \norm{f-f_{\omega_{\ell,z}}}{L^2(\omega_{\ell,E})}
 \le \norm{f-f_{\omega_{\ell,z}}}{L^2(\omega_{\ell,z})}
\end{align*}
and the fact that ---up to shape regularity--- only finitely many edges belong to $\EE_{\ell,z}$.
For $f$ being a piecewise polynomial, the upper estimate in (iii) follows from a scaling argument since both terms,
$\osc{\ell}(\EE_{\ell,z})\simeq \oscK{\ell}(z)$ define seminorms on $\PP^p(\set{T\in\TT_\ell}{z\in T})$ with kernel being the constant functions. Note that the equivalence constants depend on the shape of the node patch $\omega_{\ell,z}$, but newest vertex bisection leads only to finitely many shapes of the patches. For arbitrary $f\in L^2(\Omega)$, we first observe that the $\TT_\ell$-piecewise integral mean $f_\ell\in\PP^0(\TT_\ell)$, defined by $f_\ell|_T = f_T$ for all $T\in\TT_\ell$,
satisfies $(f_\ell)_{\omega_{\ell,E}} = f_{\omega_{\ell,E}}$ as well as
$(f_\ell)_{\omega_{\ell,z}} = f_{\omega_{\ell,z}}$, e.g.
\begin{align*}
 (f_\ell)_{\omega_{\ell,z}}
 = \frac{1}{|\omega_{\ell,z}|}\int_{\omega_{\ell,z}} f_\ell\,dx
 = \frac{1}{|\omega_{\ell,z}|}\sum_{T\subset\omega_{\ell,z}}\int_T f_\ell\,dx
 = \frac{1}{|\omega_{\ell,z}|}\sum_{T\subset\omega_{\ell,z}}\int_T f\,dx
 = f_{\omega_{\ell,z}}.
\end{align*}
This and the Pythagoras theorem for the integral mean $f_\ell$ prove
\begin{align*}
 \norm{f-f_{\omega_{\ell,z}}}{L^2(\omega_{\ell,z})}^2
 &= \norm{f-f_\ell}{L^2(\omega_{\ell,z})}^2
 + \norm{f_\ell-f_{\omega_{\ell,z}}}{L^2(\omega_{\ell,z})}^2\\
 &\lesssim \sum_{E\in\EE_{\ell,z}}\norm{f-f_\ell}{L^2(\omega_{\ell,E})}^2
 + \sum_{E\in\EE_{\ell,z}}\norm{f_\ell-f_{\omega_{\ell,z}}}{L^2(\omega_{\ell,E})}^2\\
 &= \sum_{E\in\EE_{\ell,z}}\norm{f-f_{\omega_{\ell,z}}}{L^2(\omega_{\ell,E})}^2.
\end{align*}
Scaling with $|\omega_{\ell,z}|\simeq|\omega_{\ell,E}|$ concludes the proof.
\end{proof}

\begin{proposition}[reliability and efficiency of $\varrho_\ell$]\label{prop:reliability}
The error estimator $\varrho_\ell$ is reliable
\begin{align}\label{eq:reliable}
 \norm{u - U_\ell}{H^1(\Omega)}
 \le \c{reliable}\,\varrho_\ell
\end{align}
and efficient
\begin{align}\label{eq:efficient}
 \c{efficient}^{-1}\,\varrho_\ell \le \big(\norm{\nabla(u-U_\ell)}{L^2(\Omega)}^2 + \oscT{\ell}^2 +\oscN{\ell}^2 + \oscD{\ell}^2\big)^{1/2}.
\end{align}
The constants $\setc{reliable},\setc{efficient}>0$ depend
only on $\Omega$, the use of newest
vertex bisection, and the initial mesh $\TT_0$.
\end{proposition}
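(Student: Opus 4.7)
The plan is to deduce both reliability and efficiency of $\varrho_\ell$ from the corresponding statements for the element-based estimator $\rho_\ell$ (Proposition~\ref{prop:reliability:rho}) via the local equivalence established in Lemma~\ref{lemma:local}. The edge-jump contributions on interior edges, the Neumann contributions, and the Dirichlet contributions all coincide up to shape-regular scaling ($|E|\simeq|T|^{1/2}$ in 2D) with the corresponding contributions of $\rho_\ell$, so the real task is to trade the volume residuals $\res_\ell(T) = |T|^{1/2}\norm{f}{L^2(T)}$ against the edge oscillations $\osc{\ell}(E)$.

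For reliability, first I would prove $\rho_\ell\lesssim\varrho_\ell$. Applying Lemma~\ref{lemma:local}(ii) node-wise gives $\res_\ell(\omega_{\ell,z})^2 \lesssim \eta_\ell(\EE_{\ell,z})^2 + \oscK{\ell}(z)^2$ for every $z\in\KK_\ell^\Omega$, and Lemma~\ref{lemma:local}(iii) further yields $\oscK{\ell}(z) \lesssim \osc{\ell}(\EE_{\ell,z})$. Summing over all interior nodes and exploiting that, by the assumption from Section~\ref{sec:notation}, every element of $\TT_\ell$ carries an interior node (so the node patches cover $\Omega$ with finite overlap controlled by $\sigma(\TT_\ell)$), recasts the left-hand side as $\sum_{T\in\TT_\ell}\res_\ell(T)^2$ and the right-hand side as a constant multiple of $\sum_{E\in\EE_\ell^\Omega}(\eta_\ell(E)^2 + \osc{\ell}(E)^2)$. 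Combining with the trivially matching jump, Neumann, and Dirichlet terms produces $\rho_\ell^2 \lesssim \varrho_\ell^2$, and the reliability estimate~\eqref{eq:reliable} follows from~\eqref{eq:rho:reliable}.

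For efficiency, I would argue in the reverse direction. The jump, Neumann, and Dirichlet edge contributions of $\varrho_\ell$ are bounded pointwise (again up to shape regularity) by the corresponding terms of $\rho_\ell$, and hence by the right-hand side of~\eqref{eq:efficient} thanks to~\eqref{eq:rho:efficient}. The edge oscillations are treated via Lemma~\ref{lemma:local}(i), which gives $\osc{\ell}(E) \le \c{eq1}\,\res_\ell(\omega_{\ell,E})$; summing over $E\in\EE_\ell^\Omega$ and using finite overlap of the edge patches converts this into a bound by $\sum_{T\in\TT_\ell}\res_\ell(T)^2 = \sum_T |T|\norm{f}{L^2(T)}^2$, which in turn is absorbed into the right-hand side of~\eqref{eq:efficient} by the local efficiency estimate for the volume residual that underlies Proposition~\ref{prop:reliability:rho}.

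The main technical issue, which is otherwise pure book-keeping, is the careful interplay between sums over elements, edges, and nodes, together with the corresponding finite-overlap constants. These depend only on $\sigma(\TT_\ell)$ and, through newest vertex bisection, on the initial mesh $\TT_0$; in particular, the dependence of $\c{reliable}$ and $\c{efficient}$ on $\TT_0$ enters solely through the constant $\c{eq4}$ of Lemma~\ref{lemma:local}(iii). I do not expect any genuinely new estimate to be needed beyond those already collected in Section~\ref{section:estimator:T} and Section~\ref{section:estimator:E}.
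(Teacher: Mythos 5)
Your proposal is correct and takes essentially the same route as the paper, which simply notes that Lemma~\ref{lemma:local} yields the global equivalence $\varrho_\ell\simeq\rho_\ell$ and then invokes Proposition~\ref{prop:reliability:rho}; you spell out exactly how (i)--(iii) of that lemma trade the volume residuals for edge oscillations in both directions, including the role of the interior-node assumption from Section~\ref{sec:notation}. The only minor inaccuracy is the remark that both $\c{reliable}$ and $\c{efficient}$ depend on $\TT_0$ only through $\c{eq4}$: the efficiency direction uses Lemma~\ref{lemma:local}(i) rather than (iii), so $\c{efficient}$ picks up no dependence beyond shape regularity, but this does not affect the validity of the claimed dependence in the proposition.
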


\begin{proof}
With the help of the preceding lemma, we obtain equivalence
$\varrho_\ell \simeq \rho_\ell$. Consequently, reliability and efficiency of $\varrho_\ell$ follow from the respective properties of the element-based estimator $\rho_\ell$,
see Proposition~\ref{prop:reliability:rho}.
\end{proof}

\begin{proposition}[discrete local reliability of $\varrho_\ell$]
\label{prop:dlr}
Let $\TT_* = \refine(\TT_\ell)$ be an arbitrary refinement of $\TT_\ell$ with associated Galerkin solution $U_*\in\SS^1(\TT_*)$. Let $\RR_\ell(\TT_*):=\TT_\ell\backslash\TT_*$ be
the set of all elements $T\in\TT_\ell$ which are refined to generate $\TT_*$
and
\begin{align}\label{eq:refined:edge}
 \RR_\ell(\EE_*) := \set{E\in\EE_\ell}{\exists T\in\RR_\ell(\TT_*)\quad E\cap T\neq\emptyset}
\end{align}
be the set of all edges which touch a refined element.
Then,
\begin{align}
\#\RR_\ell(\EE_*) \le \c{refine}\,\#\RR_\ell(\TT_*)
\end{align}
and
\begin{align}\label{eq:dlr}
 \norm{U_*-U_\ell}{H^1(\Omega)}
 \le \c{dlr}\,\varrho_\ell(\RR_\ell(\EE_*))
\end{align}
with constants $\setc{refine},\setc{dlr}>0$ which depend only on $\Omega$, the use of newest
vertex bisection, and the initial mesh $\TT_0$.
\end{proposition}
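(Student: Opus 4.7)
The plan is to reduce the problem to the element-based discrete local reliability (Proposition~\ref{prop:dlr:rho}) already at our disposal, and then to convert the element-based estimator $\rho_\ell(\RR_\ell(\TT_*))$ into the edge-based estimator $\varrho_\ell(\RR_\ell(\EE_*))$ by means of Lemma~\ref{lemma:local}.

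For the counting estimate $\#\RR_\ell(\EE_*)\le \c{refine}\,\#\RR_\ell(\TT_*)$, I would observe that any edge $E\in\RR_\ell(\EE_*)$ shares at least one vertex with some refined element $T\in\RR_\ell(\TT_*)$. Under newest vertex bisection starting from a fixed initial mesh $\TT_0$, only finitely many patch shapes occur, so the number of edges meeting a given element is uniformly bounded by a constant that depends only on $\sigma(\TT_\ell)$ (and hence only on $\TT_0$). Summing over the refined elements yields the claim.

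For the discrete local reliability estimate, Proposition~\ref{prop:dlr:rho} immediately gives
\begin{align*}
 \norm{U_*-U_\ell}{H^1(\Omega)} \lesssim \rho_\ell(\RR_\ell(\TT_*)),
\end{align*}
so it remains to show $\rho_\ell(\RR_\ell(\TT_*))\lesssim \varrho_\ell(\RR_\ell(\EE_*))$. Inspecting the definitions in~\eqref{eq2:estimator:T}, the jump contributions $|T|^{1/2}\norm{[\partial_nU_\ell]}{L^2(\partial T\cap\Omega)}^2$, the Neumann residuals $|T|^{1/2}\norm{\phi-\partial_nU_\ell}{L^2(\partial T\cap\Gamma_N)}^2$, and the Dirichlet terms $|T|^{1/2}\norm{(g-g_\ell)'}{L^2(\partial T\cap\Gamma_D)}^2$ are, up to equivalence $|T|^{1/2}\simeq|E|$ from shape regularity, precisely edge contributions $\eta_\ell(E)^2$ and $\oscD{\ell}(E)^2$ for edges $E\subset\partial T$, and each such $E$ belongs to $\RR_\ell(\EE_*)$. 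The true obstacle is the volume residual $|T|\norm{f}{L^2(T)}^2$, which has no edge analogue in $\varrho_\ell$.

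To handle the volume residual, I would exploit the standing assumption that every $T\in\TT_\ell$ possesses an interior node $z_T\in\partial T\cap\KK_\ell^\Omega$. Then $T\subseteq\omega_{\ell,z_T}$, so $|T|\norm{f}{L^2(T)}^2\le \res_\ell(\omega_{\ell,z_T})^2$, and Lemma~\ref{lemma:local}(ii)--(iii) yields
\begin{align*}
 |T|\,\norm{f}{L^2(T)}^2
 \lesssim \eta_\ell(\EE_{\ell,z_T})^2 + \oscK\ell(z_T)^2
 \lesssim \eta_\ell(\EE_{\ell,z_T})^2 + \osc\ell(\EE_{\ell,z_T})^2.
\end{align*}
Since $z_T$ lies on the refined element $T\in\RR_\ell(\TT_*)$, every edge $E\in\EE_{\ell,z_T}$ meets $T$ and therefore belongs to $\RR_\ell(\EE_*)$. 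Summing the resulting pointwise estimates over all $T\in\RR_\ell(\TT_*)$ and noting that, by shape regularity, each edge in $\RR_\ell(\EE_*)$ is charged only a bounded number of times, gives $\rho_\ell(\RR_\ell(\TT_*))^2\lesssim \varrho_\ell(\RR_\ell(\EE_*))^2$, which combined with Proposition~\ref{prop:dlr:rho} completes the proof with a constant $\c{dlr}$ depending only on $\Omega$, $\TT_0$, and newest vertex bisection.
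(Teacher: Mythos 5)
Your proof is correct and follows essentially the same route as the paper: apply the element-based discrete local reliability (Proposition~\ref{prop:dlr:rho}) and then convert $\rho_\ell(\RR_\ell(\TT_*))$ into $\varrho_\ell(\RR_\ell(\EE_*))$ via Lemma~\ref{lemma:local}, exploiting the interior-node assumption and shape regularity. The paper's proof is only a terse sentence to the same effect, so you have merely supplied the omitted details (the treatment of the volume residual via the interior node $z_T$ and Lemma~\ref{lemma:local}(ii)--(iii), and the finite-overlap count).
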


\begin{proof}
According to shape regularity, the number of elements which share a node $z\in\KK_\ell$ is uniformly bounded. Consequently, so is the number of edges which touch an element $T\in\RR_\ell(\TT_*)$ which will be refined. This proves the estimate $\#\RR_\ell(\EE_*) \le \c{refine}\,\#\RR_\ell(\TT_*)$.
To prove~\eqref{eq:dlr}, we use the discrete local reliability of $\rho_\ell$
from Proposition~\ref{prop:dlr:rho}. With the help of Lemma~\ref{lemma:local}, each refinement indicator $\rho_\ell(T)$ for $T\in\RR_\ell(\TT_*)$ is dominated by finitely many indicators $\varrho_\ell(E)$ for $E\in\RR_\ell(\EE_*)$, where the number depends only on the shape regularity constant $\sigma(\TT_\ell)$.
\end{proof}

%
%

\subsection{Adaptive algorithm based on D\"orfler marking}
\label{section:doerfler}%
\new{Our} version of the adaptive algorithm has been well-studied in the literature mainly for element-based estimators, cf.\ e.g.~\cite{ckns}.

\begin{algorithm}\label{algorithm:doerfler}
Let adaptivity parameter $0<\theta<1$ and initial triangulation $\TT_0$ be given. For each $\ell=0,1,2,\dots$ do:
\begin{itemize}
\item[(i)] Compute discrete solution $U_\ell\in\SS^1(\TT_\ell)$.
\item[(ii)] Compute refinement indicators $\varrho_\ell(E)$ for all $E\in\EE_\ell$.
\item[(iii)] Choose set $\MM_\ell\subseteq\EE_\ell$ with minimal cardinality such that
\begin{align}\label{eq:doerfler}
 \theta\,\varrho_\ell^2 \le \varrho_\ell(\MM_\ell)^2.
\end{align}
\item[(iv)] Generate new mesh $\TT_{\ell+1}:=\refine(\TT_\ell,\MM_\ell)$.
\item[(v)] Update counter $\ell\mapsto\ell+1$ and go to {\rm(i)}.
\end{itemize}
\end{algorithm}

 \subsection{Adaptive algorithm based on modified D\"orfler marking}
 \label{section:becker}%
 For (piecewise) smooth data $f\in H^1$ and $g\in H^2$, uniform mesh-refinement guarantees $\osc{\ell} = \OO(h^2)$ as well as 
 $\oscD{\ell} = \OO(h^{3/2})$, whereas the error and hence the error estimator $\varrho_\ell$ may at most decay as $\OO(h)$. 
 Consequently, we may expect that the normal jump terms dominate the error estimator~\cite{cv}. This observation led to the following 
 version of the marking strategy which has essentially been proposed in~\cite{bms}.
 We stress, however, that the algorithm in~\cite{bms,bm} is stated with node oscillations $\oscK{\ell}$ instead of edge oscillations 
 $\osc{\ell}$. Moreover, certain details in the proofs of~\cite{bm} seem to be dubious.

 \begin{algorithm}\label{algorithm:becker}
 Let adaptivity parameters $0<\theta_1,\theta_2<1$ and $\vartheta>0$ and an initial triangulation $\TT_0$ be given. For each $\ell=0,1,2,\dots$ do:
 \begin{itemize}
 \item[(i)] Compute discrete solution $U_\ell\in\SS^1(\TT_\ell)$.
 \item[(ii)] Compute refinement indicators $\varrho_\ell(E)$ for all $E\in\EE_\ell$.
 \item[(iii.1)] If $\osc{\ell}^2 + \oscD{\ell}^2 \le \vartheta\,\eta_\ell^2$, choose set $\MM_\ell\subseteq\EE_\ell$ with minimal cardinality such that
 \begin{align}\label{eq1:becker}
  \theta_1\,\eta_\ell^2 \le \eta_\ell(\MM_\ell)^2.
 \end{align}
 \item[(iii.2)] Otherwise, choose set $\MM_\ell\subseteq\EE_\ell$ with minimal cardinality such that
 \begin{align}\label{eq2:becker}
  \theta_2\,(\osc{\ell}^2+\oscD{\ell}^2)
  \le \osc{\ell}(\MM_\ell)^2 + \oscD{\ell}(\MM_\ell)^2.
 \end{align}
 \item[(iv)] Generate new mesh $\TT_{\ell+1}:=\refine(\TT_\ell,\MM_\ell)$.
 \item[(v)] Update counter $\ell\mapsto\ell+1$ and go to {\rm(i)}.
 \end{itemize}
 \end{algorithm}

\section{Convergence of Adaptive Algorithm}
\label{section:convergence}%
\noindent
In this section, we prove a contraction property $\Delta_{\ell+1}\le\kappa\,\Delta_\ell$ for some
quasi-error quantity $\Delta_\ell \simeq \varrho_\ell^2$. To that end, we first introduce a locally equivalent error estimator.
 To that end, we first note that the
 modified D\"orfler marking~\eqref{eq1:becker}--\eqref{eq2:becker} implies the D\"orfler
 marking~\eqref{eq:doerfler}.

 \begin{lemma}\label{lemma:doerfler}
 If the set $\MM_\ell\subseteq\EE_\ell$ satisfies the modified D\"orfler marking~\eqref{eq1:becker}--\eqref{eq2:becker} with parameters $0<\theta_1,\theta_2<1$ \and $\vartheta>0$. Then, $\MM_\ell$ satisfies the D\"orfler marking~\eqref{eq:doerfler} with parameter
 $0<\theta := \min\{\theta_1/(1+\vartheta)\,,\,\theta_2/(1+\vartheta^{-1})\}<1$.
 \end{lemma}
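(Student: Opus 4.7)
The plan is to split the proof into the two cases defined by the modified Dörfler strategy and, in each case, bound $\varrho_\ell^2 = \eta_\ell^2 + \osc{\ell}^2 + \oscD{\ell}^2$ from above using the case hypothesis, then use $\eta_\ell(\MM_\ell)^2 \le \varrho_\ell(\MM_\ell)^2$ resp.\ $\osc{\ell}(\MM_\ell)^2 + \oscD{\ell}(\MM_\ell)^2 \le \varrho_\ell(\MM_\ell)^2$ to produce a Dörfler inequality for $\varrho_\ell$.

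In the first case $\osc{\ell}^2 + \oscD{\ell}^2 \le \vartheta\,\eta_\ell^2$, I would immediately add $\eta_\ell^2$ on both sides to get $\varrho_\ell^2 \le (1+\vartheta)\,\eta_\ell^2$, so that~\eqref{eq1:becker} yields $\tfrac{\theta_1}{1+\vartheta}\varrho_\ell^2 \le \theta_1\eta_\ell^2 \le \eta_\ell(\MM_\ell)^2 \le \varrho_\ell(\MM_\ell)^2$. In the second case, $\eta_\ell^2 < \vartheta^{-1}(\osc{\ell}^2 + \oscD{\ell}^2)$, whence $\varrho_\ell^2 < (1+\vartheta^{-1})(\osc{\ell}^2 + \oscD{\ell}^2)$; combined with~\eqref{eq2:becker} this gives $\tfrac{\theta_2}{1+\vartheta^{-1}}\varrho_\ell^2 \le \theta_2(\osc{\ell}^2+\oscD{\ell}^2) \le \osc{\ell}(\MM_\ell)^2+\oscD{\ell}(\MM_\ell)^2 \le \varrho_\ell(\MM_\ell)^2$.

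Taking the minimum of the two prefactors defines $\theta$ as in the statement, and in both cases $\theta\,\varrho_\ell^2 \le \varrho_\ell(\MM_\ell)^2$, i.e.~\eqref{eq:doerfler}. Finally, $\theta<1$ is immediate since $\theta_1,\theta_2\in(0,1)$ while $1+\vartheta,1+\vartheta^{-1}>1$. There is no real obstacle here; the only thing to double-check is the decomposition $\varrho_\ell^2 = \eta_\ell^2 + \osc{\ell}^2 + \oscD{\ell}^2$, which holds by the definition of $\varrho_\ell$ in~\eqref{eq1:estimator:E}--\eqref{eq2:estimator:E} together with the conventions $\osc{\ell}(E)=0$ for $E\notin\EE_\ell^\Omega$ and $\oscD{\ell}(E)=0$ for $E\notin\EE_\ell^D$, so that the three contributions live on disjoint subsets of edges and the sum over $\MM_\ell$ also decomposes additively.
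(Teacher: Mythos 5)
Your proof is correct and follows exactly the same two-case argument as the paper: bound $\varrho_\ell^2$ by $(1+\vartheta)\eta_\ell^2$ resp.\ $(1+\vartheta^{-1})(\osc{\ell}^2+\oscD{\ell}^2)$, then invoke~\eqref{eq1:becker} resp.~\eqref{eq2:becker} and the fact that each contribution is dominated by $\varrho_\ell(\MM_\ell)^2$. The only difference is that you spell out the additive decomposition of $\varrho_\ell^2$ over disjoint edge sets, which the paper leaves implicit.
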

 
 \begin{proof}
 In case of $\osc{\ell}^2 + \oscD{\ell}^2 \le \vartheta\,\eta_\ell^2$, it holds that
 $\varrho_\ell^2 \le (1+\vartheta)\,\eta_\ell^2$. This implies
 \begin{align*}
  \frac{\theta_1}{1+\vartheta}\,\varrho_\ell^2 \le \theta_1\,\eta_\ell^2
  \le \eta_\ell(\MM_\ell)^2 \le \varrho_\ell(\MM_\ell)^2.
 \end{align*}
 Otherwise, it holds that $\varrho_\ell^2 \le (1+\vartheta^{-1})\,(\osc{\ell}^2+\oscD{\ell}^2)$ which yields
 \begin{align*}
  \frac{\theta_2}{1+\vartheta^{-1}}\,\varrho_\ell^2
  \le \theta_2\, \big(\osc{\ell}^2 + \oscD{\ell}^2\big)
  \le \big(\osc{\ell}(\MM_\ell)^2+\oscD{\ell}(\MM_\ell)^2\big)
  \le \varrho_\ell(\MM_\ell)^2.
 \end{align*}
 This concludes the proof.
 \end{proof}

\begin{lemma}\label{lemma:boundary}
Let $W_{\ell+1} \in \SS^1(\TT_{\ell+1})$ with
$W_{\ell+1}|_\Gamma = g_{\ell+1}$. Define
$W_{\ell+1}^\ell \in \SS^1(\TT_{\ell+1})$ by
\begin{align}\label{eq:boundary_changed}
 W_{\ell+1}^\ell(z) = \begin{cases}
 W_{\ell+1}(z) & \text{ for } z\in \NN_{\ell+1}\backslash \Gamma,\\
 g_\ell(z) & \text{ for }z \in \NN_{\ell+1}\cap \Gamma.\end{cases}
\end{align}
Then, there holds
\begin{align}\label{eq:stability_boundary}
 \norm{W_{\ell+1} - W_{\ell+1}^\ell}{H^1(\Omega)}
 \le \c{boundary}\,\norm{g_{\ell+1} - g_\ell}{H^{1/2}(\Gamma)},
\end{align}
where $\setc{boundary}>0$ depends only on $\sigma(\TT_\ell)$.\qed
\end{lemma}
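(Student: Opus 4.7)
\medskip
\noindent\textbf{Proof plan.}
I would analyze the difference $V := W_{\ell+1}-W_{\ell+1}^\ell \in \SS^1(\TT_{\ell+1})$ directly through its nodal representation. By definition, $V(z)=0$ for every $z\in\NN_{\ell+1}\setminus\Gamma$ and $V(z)=(g_{\ell+1}-g_\ell)(z)$ for every $z\in\NN_{\ell+1}\cap\Gamma$; in particular $V|_\Gamma = g_{\ell+1}-g_\ell$ and $V$ is supported on the layer of elements of $\TT_{\ell+1}$ touching $\Gamma$. Since $g_\ell$ and $g_{\ell+1}$ are both nodal interpolants of the same Dirichlet datum $g$, they agree at every old node $z\in\NN_\ell\cap\Gamma$. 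Hence the only non-vanishing nodal values of $V$ sit at the new boundary nodes $z\in(\NN_{\ell+1}\setminus\NN_\ell)\cap\Gamma$, each of which is the interior midpoint of a unique old boundary edge $E_z\in\EE_\ell^\Gamma$.

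First, I would carry out a standard reference-element scaling on each element $T\in\TT_{\ell+1}$ with $T\cap\Gamma\neq\emptyset$. Since $V|_T$ is affine with at most two non-zero vertex values (both located on $\Gamma$), the computation on the reference triangle combined with shape regularity gives $\|V\|_{H^1(T)}^2\lesssim\sum_{z\in T\cap\Gamma}V(z)^2$. Summing over $T$ and using the uniformly bounded valence of every node yields
\begin{align*}
 \|V\|_{H^1(\Omega)}^2 \;\lesssim\; \sum_{z\in(\NN_{\ell+1}\setminus\NN_\ell)\cap\Gamma}(g_{\ell+1}-g_\ell)(z)^2,
\end{align*}
with constants depending only on $\sigma(\TT_{\ell+1})$.

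The second and main step is to dominate this nodal $\ell^2$-sum by $\|g_{\ell+1}-g_\ell\|_{H^{1/2}(\Gamma)}^2$. Here the one-dimensional nature of $\Gamma$ is essential: because each new node $z$ is the midpoint of a different old edge $E_z$, the traces $\psi_z:=\varphi_z|_\Gamma$ have \emph{pairwise disjoint} supports, and the single-level detail function reads $g_{\ell+1}-g_\ell = \sum_z (g_{\ell+1}-g_\ell)(z)\,\psi_z$. In one space dimension the hierarchical (Faber) basis is stable in $H^s$ for every $0\le s<3/2$, so for $s=1/2$ one obtains
\begin{align*}
 \sum_{z}(g_{\ell+1}-g_\ell)(z)^2 \;\lesssim\; \|g_{\ell+1}-g_\ell\|_{H^{1/2}(\Gamma)}^2
\end{align*}
with a constant depending solely on shape regularity. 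Combining the two displays gives the asserted bound.

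The technically delicate point is the second step: although the disjoint-support property looks like $L^2$-orthogonality, the $H^{1/2}$-seminorm is non-local, so one cannot simply invoke a Pythagoras argument. The cleanest way I see is either to invoke the classical $1$D Faber stability directly, or to construct an explicit dual system in $H^{-1/2}(\Gamma)$ by taking, for each new node $z$, a rescaled dual function supported on $E_z$ with $\dual{\psi_z}{\eta_z}_{E_z}=1$ and $\|\eta_z\|_{H^{-1/2}(\Gamma)}\simeq 1$, and then testing the $H^{1/2}$-norm by $\eta:=\sum c_z\eta_z$. Shape regularity of $\TT_\ell$ is what bounds the constants uniformly across the locally refined meshes produced by newest-vertex bisection.
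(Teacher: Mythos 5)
The paper itself leaves this lemma without proof (it is stated with an immediate \texttt{qed} and, as far as I can see, never cited again), so there is no reference argument to compare against; your proposal has to stand on its own merits, and it essentially does. The structure is sound: reduce the $H^1$-norm of $V=W_{\ell+1}-W_{\ell+1}^\ell$ to the nodal $\ell^2$-sum over new boundary nodes by element-wise scaling and bounded valence, and then dominate that $\ell^2$-sum by $\norm{g_{\ell+1}-g_\ell}{H^{1/2}(\Gamma)}^2$. You were right to flag the second step as the delicate one, and you were also right that the assumption ``$g_\ell$ and $g_{\ell+1}$ interpolate the same $g$'' is essential: if $g_{\ell+1}-g_\ell$ were allowed to be, say, constant $1$ on $\Gamma$, then $\norm{V}{H^1(\Omega)}\simeq h^{-1/2}$ while $\norm{g_{\ell+1}-g_\ell}{H^{1/2}(\Gamma)}\simeq 1$, and the claimed bound would fail. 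Make this hypothesis explicit.

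The one inaccuracy is the appeal to ``Faber stability in $H^s$ for every $0\le s<3/2$.'' This is not true as a multilevel statement: the hierarchical hat basis is Riesz in $H^s$ only for $s>1/2$; below that threshold cross-level interactions destroy stability, which is precisely why $L^2$-stable wavelet constructions exist. Fortunately you do not need any multilevel statement, only a one-level estimate, and the disjoint-support structure gives it directly. Since $g_{\ell+1}-g_\ell=\sum_z c_z\psi_z$ with $\psi_z:=\varphi_z|_\Gamma$ supported on pairwise disjoint old edges $E_z\in\EE_\ell^\Gamma$, restrict the Gagliardo double integral to the diagonal blocks $E_z\times E_z$:
\begin{align*}
 \norm{g_{\ell+1}-g_\ell}{H^{1/2}(\Gamma)}^2
 \;\gtrsim\;\iint_{\Gamma\times\Gamma}\frac{|(g_{\ell+1}-g_\ell)(x)-(g_{\ell+1}-g_\ell)(y)|^2}{|x-y|^2}\,dx\,dy
 \;\ge\;\sum_z c_z^2\iint_{E_z\times E_z}\frac{|\psi_z(x)-\psi_z(y)|^2}{|x-y|^2}\,dx\,dy,
\end{align*}
and the last double integral is a fixed positive constant independent of $|E_z|$ because the one-dimensional $H^{1/2}$-Gagliardo seminorm is scale invariant and $\psi_z$ is a rescaled reference hat function. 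This gives $\sum_z c_z^2\lesssim\norm{g_{\ell+1}-g_\ell}{H^{1/2}(\Gamma)}^2$ with a constant depending only on shape regularity and the (fixed) equivalence between the trace norm and the Gagliardo norm on $\Gamma$. Your alternative via a dual system $\eta_z\in H^{-1/2}(\Gamma)$ works too, but the localization above is shorter and avoids constructing duals explicitly. With this replacement your argument is complete.
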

\begin{lemma}[equivalent error estimator]\label{lemma:eqivalenterrest}
Consider the extended error estimator
 \begin{align}
  \mu_\ell^{\,2}
  = \sum_{E\in\EE_\ell^\Omega\cup\EE_\ell^N}\eta_\ell(E)^2
  + \sum_{E\in\EE_\ell}\wosc{\ell}(E)^2
  + \sum_{E\in\EE_\ell^D}\oscD{\ell}(E)^2,
 \end{align}
where the oscillation terms $\wosc{\ell}(E)$ read
\begin{align}
 \wosc{\ell}(E)^2 := \begin{cases}
 \osc{\ell}(E)^2&\text{for }E\in\EE_\ell^\Omega,\\
 |T_E|\,\norm{f}{L^2(T_E)}^2
 \quad&\text{for }E\in\EE_\ell^\Gamma
 \text{ and }T\in\TT_\ell\text{ with }E\subset\partial T_E.
 \end{cases}
\end{align}
Then, there holds equivalence in the following sense
\begin{align*}
 \c{equiverrest}^{-1}\, \mu_\ell^{\,2} \leq \varrho_\ell^2 \leq \mu_\ell^{\,2}\quad \text{and} \quad
\varrho_\ell(E) \leq \mu_\ell(E) \text{ for all } E \in \EE_\ell,
\end{align*}
where $\setc{equiverrest}\ge1$ depends only on $\sigma(\TT_\ell)$. Particularly, if $\MM_\ell\subseteq \EE_\ell$ satisfies the D\"orfler marking~\eqref{eq:doerfler}
with $\varrho_\ell$ and $\theta>0$, then $\MM_\ell$ satisfies the D\"orfler
marking with $\mu_\ell$ for some modified parameter
$0<\widetilde\theta:={\theta}/{\c{equiverrest}} <1$.
\end{lemma}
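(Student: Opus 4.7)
The key observation is to compare the two estimators term by term. One has
\begin{align*}
 \varrho_\ell^2 \;=\; \sum_{E\in\EE_\ell^\Omega\cup\EE_\ell^N}\eta_\ell(E)^2
 + \sum_{E\in\EE_\ell^\Omega}\osc{\ell}(E)^2 + \sum_{E\in\EE_\ell^D}\oscD{\ell}(E)^2
\end{align*}
while $\mu_\ell^{\,2}$ has the \emph{same} jump and Dirichlet contributions together with the extended oscillation sum. Since $\wosc{\ell}(E)=\osc{\ell}(E)$ for interior edges, the only additional terms in $\mu_\ell^{\,2}$ are the volume residuals $|T_E|\,\norm{f}{L^2(T_E)}^2$ associated with the boundary edges $E\in\EE_\ell^\Gamma$. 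In particular, the upper bound $\varrho_\ell^2 \le \mu_\ell^{\,2}$ as well as the per-edge estimate $\varrho_\ell(E)\le\mu_\ell(E)$ are immediate, since every contribution to $\varrho_\ell(E)^2$ also appears in $\mu_\ell(E)^2$ (with non-negative slack on Neumann and Dirichlet edges).

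The substantive direction is the lower bound $\mu_\ell^{\,2}\le \c{equiverrest}\,\varrho_\ell^2$. The plan is to control each extra term $|T_E|\,\norm{f}{L^2(T_E)}^2$ by local contributions of $\varrho_\ell$. To this end, I invoke the standing assumption from Section~\ref{sec:notation} that every element $T\in\TT_\ell$ contains an interior node $z\in\KK_\ell^\Omega$ on its boundary. For a boundary edge $E\in\EE_\ell^\Gamma$ with adjacent element $T_E$, pick such an interior node $z$ of $T_E$. Then $|T_E|\,\norm{f}{L^2(T_E)}^2 \le \res_\ell(\omega_{\ell,z})^2$, so Lemma~\ref{lemma:local}(ii) and (iii) yield
\begin{align*}
 |T_E|\,\norm{f}{L^2(T_E)}^2
 \;\le\; \c{eq2}^{\,2}\,\big(\eta_\ell(\EE_{\ell,z})^2 + \oscK{\ell}(z)^2\big)
 \;\le\; \c{eq2}^{\,2}\,\big(\eta_\ell(\EE_{\ell,z})^2 + \c{eq4}^{\,2}\,\osc{\ell}(\EE_{\ell,z})^2\big).
\end{align*}

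The main obstacle is controlling the overcounting when summing over $E\in\EE_\ell^\Gamma$: a single interior node $z$ may be associated with several boundary edges via the selection $E\mapsto z$, and each $\eta_\ell(E')$, $\osc{\ell}(E')$ is touched by several such patches. However, shape regularity together with the use of newest vertex bisection guarantees that both multiplicities are uniformly bounded (only finitely many patch shapes occur, cf.\ Section~\ref{section:nvb}). Summing the above estimate over all boundary edges and absorbing these combinatorial constants gives
\begin{align*}
 \sum_{E\in\EE_\ell^\Gamma}|T_E|\,\norm{f}{L^2(T_E)}^2
 \;\lesssim\; \sum_{E\in\EE_\ell^\Omega}\eta_\ell(E)^2 + \sum_{E\in\EE_\ell^\Omega}\osc{\ell}(E)^2
 \;\le\; \varrho_\ell^2,
\end{align*}
and adding $\varrho_\ell^2$ on both sides yields $\mu_\ell^{\,2}\le \c{equiverrest}\,\varrho_\ell^2$ with $\c{equiverrest}$ depending only on $\sigma(\TT_\ell)$, the use of newest vertex bisection, and $\TT_0$.

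For the final claim on marking, assume the D\"orfler criterion $\theta\,\varrho_\ell^2 \le \varrho_\ell(\MM_\ell)^2$. Combining the already established bounds $\mu_\ell^{\,2}\le \c{equiverrest}\,\varrho_\ell^2$ and $\varrho_\ell(E)\le\mu_\ell(E)$ (summed over $E\in\MM_\ell$),
\begin{align*}
 \widetilde\theta\,\mu_\ell^{\,2}
 \;=\; \frac{\theta}{\c{equiverrest}}\,\mu_\ell^{\,2}
 \;\le\; \theta\,\varrho_\ell^2
 \;\le\; \varrho_\ell(\MM_\ell)^2
 \;\le\; \mu_\ell(\MM_\ell)^2,
\end{align*}
so $\MM_\ell$ satisfies the D\"orfler criterion for $\mu_\ell$ with parameter $\widetilde\theta=\theta/\c{equiverrest}\in(0,1)$, which concludes the plan.
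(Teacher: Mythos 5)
Your argument is correct and takes essentially the same route as the paper, which simply says that the lower bound ``follows from Lemma~\ref{lemma:local}~(ii)~\&~(iii)'' and then records the same three-line chain of inequalities for the marking claim. You have usefully filled in the details the paper leaves implicit (picking an interior node $z\in\partial T_E\cap\KK_\ell^\Omega$ for each boundary edge, applying (ii) and (iii) on that node patch, and controlling the finite overlap via shape regularity and the finitely many patch shapes under newest vertex bisection), but there is no difference in approach.
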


\begin{proof}
The estimates $\varrho_\ell(E) \leq \mu_\ell(E) \text{ for all } E \in \EE_\ell$ are obvious and imply $\varrho_\ell^2\leq \mu_\ell^{\,2}$. The estimate
$\c{equiverrest}^{-1}\, \mu_\ell^{\,2} \leq \varrho_\ell^2$ follows from Lemma~\ref{lemma:local} (ii) \& (iii). Now, we obtain
\begin{align*}
 \widetilde\theta\,\mu_\ell^{\,2} \le \theta\,\varrho_\ell^2
 \le \varrho_\ell(\MM_\ell)^2 \le \mu_\ell(\MM_\ell)^2,
\end{align*}
i.e.\ the estimator $\mu_\ell$ satisfies the D\"orfler marking~\eqref{eq:doerfler}
with $\widetilde\theta:={\theta}/{\c{equiverrest}}$.
\end{proof}

\begin{lemma}[estimator reduction]\label{lemma:reduction}
Assume that the set $\MM_\ell\subseteq\EE_\ell$ of marked edges satisfies the D\"orfler
marking~\eqref{eq:doerfler} with $\varrho_\ell$ and some fixed parameter $0<\theta<1$ and that
$\TT_{\ell+1}=\refine(\TT_\ell,\MM_\ell)$ is obtained by local newest vertex bisection
of $\TT_\ell$.
Then, there holds the estimator reduction estimate
\begin{align}\label{eq:reduction}
\mu_{\ell+1}^{\,2}
\le q\,\mu_\ell^{\,2} + \c{reduction}\norm{\nabla(U_{\ell+1}-U_\ell)}{L^2(\Omega)}^2
\end{align}
with some contraction constant $q\in(0,1)$ which depends only on
$\theta\in(0,1)$. The constant $\setc{reduction}>0$ additionally depends
only on the initial mesh $\TT_0$.
\end{lemma}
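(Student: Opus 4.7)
The plan is to follow the standard estimator reduction strategy of~\cite{ckns}, extended to accommodate the Dirichlet oscillations $\oscD{\ell}$ and the boundary-edge volume contributions that enter $\mu_\ell$. First I translate the hypothesis~\eqref{eq:doerfler} for $\varrho_\ell$ into a D\"orfler-type property for the equivalent estimator $\mu_\ell$ via Lemma~\ref{lemma:eqivalenterrest}, so that $\widetilde\theta\,\mu_\ell^{\,2}\le\mu_\ell(\MM_\ell)^2$ with $\widetilde\theta:=\theta/\c{equiverrest}$. The entire subsequent argument is then carried out on $\mu$.

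Next I prove an edgewise reduction/perturbation bound separately for the three types of local contributions. For the jump terms $\eta_\ell(E)^2$, a Young inequality together with a standard inverse estimate on the edge patch yields, for every $E'\in\EE_{\ell+1}$ that either equals or is a bisection-child of some $E\in\EE_\ell$,
\[
 \eta_{\ell+1}(E')^2 \le (1+\delta)\,\frac{|E'|}{|E|}\,\eta_\ell(E)^2 + C_\delta\,\norm{\nabla(U_{\ell+1}-U_\ell)}{L^2(\omega_{\ell,E})}^2.
\]
For the interior oscillations $\osc{\ell}(E)^2$ and for the boundary-edge residuals $|T_E|\,\norm{f}{L^2(T_E)}^2$ contained in $\wosc{\ell}(E)^2$, I exploit that $f\mapsto f_\omega$ is the best $L^2$-approximation by a constant on $\omega$: refining an edge (and hence its patch) or its triangle $T_E$ only decreases the $L^2$-norm while halving the geometric weight, with no $U_{\ell+1}-U_\ell$ remainder. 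For the Dirichlet oscillations $\oscD{\ell}(E)^2$ I invoke the pivotal orthogonality~\eqref{eq:nodal:orthogonality} edgewise with $\widetilde g_\ell:=g_{\ell+1}|_{\EE_\ell^D}$, which is piecewise affine on $\EE_\ell^D$ and interpolates $g$ at all coarse vertices; this yields $\norm{(g-g_{\ell+1})'}{L^2(E)}\le\norm{(g-g_\ell)'}{L^2(E)}$ edgewise, and combined with the halving of $|E|$ on refined Dirichlet edges gives a pure factor of $1/2$ without any remainder term.

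Summing the edgewise bounds over $\EE_{\ell+1}$, every refined edge (all of $\MM_\ell$ and possibly closure edges) contributes with prefactor at most $(1+\delta)/2$, while unrefined edges contribute with prefactor at most $(1+\delta)$. Together with $\mu_\ell(\MM_\ell)^2\ge\widetilde\theta\,\mu_\ell^{\,2}$ and the facts that $\eta_\ell(E)\le\mu_\ell(E)$ and $\oscD{\ell}(E)\le\mu_\ell(E)$, this produces
\[
 \mu_{\ell+1}^{\,2} \le (1+\delta)\bigl(1-\widetilde\theta/2\bigr)\,\mu_\ell^{\,2} + C_\delta\,\norm{\nabla(U_{\ell+1}-U_\ell)}{L^2(\Omega)}^2.
\]
Choosing $\delta>0$ small enough that $q:=(1+\delta)\bigl(1-\widetilde\theta/2\bigr)<1$ completes the proof; the constant $\c{reduction}=C_\delta$ depends only on $\sigma(\TT_0)$ through the inverse estimate and the finitely many patch shapes produced by newest vertex bisection.

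The main obstacle I expect is the bookkeeping around the Dirichlet oscillations: the orthogonality~\eqref{eq:nodal:orthogonality} must be invoked with a $\widetilde g_\ell\in\SS^1(\EE_\ell^D)$, so I have to verify that $g_{\ell+1}$ restricted to the coarse Dirichlet edges is affine on each $E\in\EE_\ell^D$ (trivially true on unrefined edges and obtained via a comparison to the coarse linear interpolant on bisected ones). A secondary technicality is that the boundary-edge contribution $|T_E|\,\norm{f}{L^2(T_E)}^2$ is a full residual rather than a genuine oscillation and therefore can only be reduced by bisecting $T_E$; this is automatic under newest vertex bisection whenever a boundary edge is marked, and is the reason for the explicit inclusion of these terms in the definition of $\mu_\ell$ (rather than in $\varrho_\ell$).
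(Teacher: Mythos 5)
Your proposal follows exactly the paper's strategy: translate the D\"orfler property to $\mu_\ell$ via Lemma~\ref{lemma:eqivalenterrest}, split $\mu_{\ell+1}$ by Young's inequality, absorb the $U_{\ell+1}-U_\ell$ perturbation via an inverse estimate, and prove a componentwise reduction for the three contributions of $\mu_\ell$. However, two of your componentwise arguments do not hold as stated.

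First, your application of the orthogonality relation~\eqref{eq:nodal:orthogonality} is backwards. You take $\widetilde g_\ell := g_{\ell+1}|_{\EE_\ell^D}$ as the test function, asserting it is ``piecewise affine on $\EE_\ell^D$ and interpolates $g$ at all coarse vertices''. But $g_{\ell+1}$ is the nodal interpoland with respect to the \emph{fine} partition $\EE_{\ell+1}^D$: on a bisected coarse edge $E=E_1\cup E_2\in\EE_\ell^D$, $g_{\ell+1}$ has a kink at the new midpoint and is generally not affine on $E$, so $g_{\ell+1}\notin\SS^1(\EE_\ell^D)$ and~\eqref{eq:nodal:orthogonality} cannot be invoked at level $\ell$ with this choice. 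The correct way round is the one used in the paper: apply~\eqref{eq:nodal:orthogonality} on each \emph{fine} edge $E'\in\EE_{\ell+1}^D$ with $\widetilde g_{\ell+1}:=g_\ell|_{E'}$, which \emph{is} affine since $E'\subseteq E$; this gives $\norm{(g-g_{\ell+1})'}{L^2(E')}\le\norm{(g-g_\ell)'}{L^2(E')}$ and, after summation and halving of $|E'|$, the claimed Dirichlet oscillation reduction. Your conclusion is correct; your justification is not.

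Second, your claim that the interior oscillations decrease edgewise by a pure factor $1/2$ with ``no $U_{\ell+1}-U_\ell$ remainder'' does not hold. If $E\in\MM_\ell$ is bisected, both triangles in $\omega_{\ell,E}$ are refined, but the \emph{new} interior edges created near $\partial\omega_{\ell,E}$ (e.g., halves of the non-marked edges of the triangles in $\omega_{\ell,E}$) have patches that straddle the boundary of $\omega_{\ell,E}$ and are therefore \emph{not} nested in $\omega_{\ell,E}$. This breaks the clean attribution of fine contributions to coarse edges and prevents the naive halving. The paper sidesteps this with the estimate $\wosc{\ell+1}^2\le\wosc{\ell}^2-\tfrac14\wosc{\ell}(\MM_\ell)^2$ from~\cite[Lemma~6]{pp}, whose weaker constant $1/4$ reflects exactly this bookkeeping difficulty; your final contraction would then read $q=(1+\delta)(1-\widetilde\theta/4)$ rather than $(1+\delta)(1-\widetilde\theta/2)$. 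The qualitative conclusion survives, but your proof as written has a gap at this step.
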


\begin{proof}[Sketch of proof]
For the sake of completeness, we include the idea of the proof of~\eqref{eq:reduction}.
To keep the notation simple, we define $\eta_\ell(E) = 0$ for $E\in\EE_\ell^D$ and
$\oscD{\ell}(E) = 0$ for $E\in\EE_\ell^\Omega\cup\EE_\ell^N$ so that all contributions of $\mu_\ell$
are defined on the entire set of edges $\EE_\ell$.%

First, we employ a triangle inequality and the Young inequality to see
\begin{align*}
 \mu_{\ell+1}^{\,2}
 &\le (1+\delta)\,\Big(\sum_{E\in\EE_{\ell+1}^\Omega}|E|\norm{[\partial_nU_\ell]}{L^2(E)}^2
 + \sum_{E\in\EE_{\ell+1}^N}|E|\norm{\phi-\partial_nU_\ell}{L^2(E)}^2\Big)
 \\&\quad
 + (1+\delta^{-1})\Big(
 \sum_{E\in\EE_{\ell+1}^\Omega}|E|\norm{[\partial_n(U_{\ell+1}-U_\ell)]}{L^2(E)}^2
 + \sum_{E\in\EE_{\ell+1}^N}|E|\norm{\partial_n(U_{\ell+1}-U_\ell)}{L^2(E)}^2
 \Big)
 \\&\quad
  + \wosc{\ell+1}^2 + \oscD{\ell+1}^2,
\end{align*}
where $\delta>0$ is arbitrary.
Second, a scaling argument proves
\begin{align*}
 \sum_{E\in\EE_{\ell+1}^\Omega}|E|\norm{[\partial_n(U_{\ell+1}-U_\ell)]}{L^2(E)}^2
 + \sum_{E\in\EE_{\ell+1}^\Gamma}|E|\norm{\partial_n(U_{\ell+1}-U_\ell)}{L^2(E)}^2
 \le C\, \norm{\nabla(U_{\ell+1}-U_\ell)}{L^2(\Omega)}^2,
\end{align*}
and the constant $C>0$ depends only on $\sigma(\TT_\ell)$.
Third, we argue as in~\cite[Corollary 3.4]{ckns} to see
\begin{align*}
 \sum_{E\in\EE_{\ell+1}^\Omega}|E|\norm{[\partial_nU_\ell]}{L^2(E)}^2
 + \sum_{E\in\EE_{\ell+1}^N}|E|\norm{\phi-\partial_nU_\ell}{L^2(E)}^2
 \le \eta_\ell^2 - \frac12\,\eta_\ell(\MM_\ell)^2.
\end{align*}
Fourth, it is part of the proof of~\cite[Theorem 5.4]{agp} that
\begin{align*}
 \oscD{\ell+1}^2 \le \oscD{\ell}^2 - \frac12\,\oscD{\ell}(\MM_\ell)^2,
\end{align*}
which essentially follows from the orthogonality relation~\eqref{eq:nodal:orthogonality}.
Fifth, in~\cite[Lemma 6]{pp} it is proven that
\begin{align}\label{eq:wosc}
 \wosc{\ell+1}^2
 \le \wosc{\ell}^2 - \frac14\,\wosc{\ell}(\MM_\ell)^2.
\end{align}
Plugging everything together, we see
\begin{align*}
 \mu_{\ell+1}^{\,2}
 &\le (1+\delta) \big(\mu_\ell^{\,2} - \frac14\,\mu_\ell(\MM_\ell)^2\big)
 + C(1+\delta^{-1})\,\norm{\nabla(U_{\ell+1}-U_\ell)}{L^2(\Omega)}^2\\
 &\le (1+\delta) (1-\widetilde\theta/4)\mu_\ell^{\,2} + C(1+\delta^{-1})\,\norm{\nabla(U_{\ell+1}-U_\ell)}{L^2(\Omega)}^2,
\end{align*}
where we have used that Lemma~\ref{lemma:eqivalenterrest} guarantees the
 D\"orfler marking for $\mu_\ell$ in the second estimate.
Finally, it only remains to choose $\delta>0$ sufficiently small so that
$q:=(1+\delta)(1-\widetilde\theta/4)<1$.
\end{proof}

The following lemma states some quasi-Galerkin orthogonality property which allows to overcome the lack of Galerkin orthogonality used 
in~\cite{ckns}.

\begin{lemma}[quasi-Galerkin orthogonality]\label{lemma:orthogonality}
Let $\TT_*=\refine(\TT_\ell)$ be an arbitrary refinement of $\TT_\ell$
with the associated Galerkin solution $U_*\in\SS^1(\TT_*)$.
Then,
\begin{align}\label{eq:orthogonality}
 2\,|\dual{\nabla(u-U_*)}{\nabla(U_*-U_{\ell})}_\Omega|
 \le \alpha\,\norm{\nabla(u-U_*)}{L^2(\Omega)}^2
 + \alpha^{-1}\c{orthogonality}\,\norm{h_\ell^{1/2}(g_*-g_\ell)'}{L^2(\Gamma_D)}^2,
\end{align}
for all $\alpha>0$, and consequently
\begin{align}\label{eq:orthogonality1}
\begin{split}
 (1-\alpha)\norm{\nabla(u-U_*)}{L^2(\Omega)}^2
 &\le \norm{\nabla(u-U_\ell)}{L^2(\Omega)}^2
 - \norm{\nabla(U_*-U_\ell)}{L^2(\Omega)}^2
 \\&\qquad
 + \alpha^{-1}\c{orthogonality}\,\norm{h_\ell^{1/2}(g_*-g_\ell)'}{L^2(\Gamma_D)}^2
\end{split}
\end{align}
as well as
\begin{align}\label{eq:orthogonality2}
\begin{split}
 \norm{\nabla(u-U_\ell)}{L^2(\Omega)}^2
 &\le (1+\alpha)\norm{\nabla(u-U_*)}{L^2(\Omega)}^2
 + \norm{\nabla(U_*-U_\ell)}{L^2(\Omega)}^2
 \\&\qquad
 + \alpha^{-1}\c{orthogonality}\,\norm{h_\ell^{1/2}(g_*-g_\ell)'}{L^2(\Gamma_D)}^2.
\end{split}
\end{align}
The constant $\setc{orthogonality}>0$ depends only on the shape regularity of $\sigma(\TT_\ell)$ and $\sigma(\TT_*)$ and on $\Omega$.
\end{lemma}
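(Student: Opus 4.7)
The plan is to circumvent the failure of Galerkin orthogonality caused by $(U_*-U_\ell)|_{\Gamma_D} = g_*-g_\ell \neq 0$ by subtracting off a suitable discrete lifting of the boundary mismatch. Concretely, I would extend $g_*-g_\ell\in\SS^1(\EE_*^D)$ by zero on $\Gamma_N$ to obtain $\widetilde g\in\SS^1(\EE_*^\Gamma)$; this is a legal piecewise affine boundary function because $(g_*-g_\ell)(z)=0$ at all endpoints $z\in\overline\Gamma_D\cap\overline\Gamma_N$ (these are $\TT_\ell$-nodes, where both nodal interpolands coincide with $g(z)$). Set $Z_*:=\LL_*\widetilde g\in\SS^1(\TT_*)$ using the discrete lifting from~\eqref{eq:discreteLifting}. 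Then $V_*:=(U_*-U_\ell)-Z_*$ vanishes on $\Gamma_D$, since $(U_*-U_\ell)|_{\Gamma_D}=g_*-g_\ell=Z_*|_{\Gamma_D}$, and therefore $V_*\in\SS^1_D(\TT_*)$ is an admissible Galerkin test function for $U_*$.

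Since $U_*\in\SS^1(\TT_*)$ solves~\eqref{eq:galerkin} on $\TT_*$, Galerkin orthogonality against $V_*$ yields $\dual{\nabla(u-U_*)}{\nabla V_*}_\Omega=0$, so $\dual{\nabla(u-U_*)}{\nabla(U_*-U_\ell)}_\Omega = \dual{\nabla(u-U_*)}{\nabla Z_*}_\Omega$. Cauchy–Schwarz together with Young's inequality then gives
\begin{align*}
 2\,|\dual{\nabla(u-U_*)}{\nabla(U_*-U_\ell)}_\Omega|
 \le \alpha\,\norm{\nabla(u-U_*)}{L^2(\Omega)}^2 + \alpha^{-1}\norm{\nabla Z_*}{L^2(\Omega)}^2
\end{align*}
for any $\alpha>0$. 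It remains to absorb $\|\nabla Z_*\|_{L^2(\Omega)}^2$ into the claimed boundary term.

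For this final estimate I would chain three bounds. Uniform boundedness of the discrete lifting operator $\LL_*$ (see Section~\ref{section:sz}) gives $\|\nabla Z_*\|_{L^2(\Omega)}\lesssim\|\widetilde g\|_{H^{1/2}(\Gamma)}$. Since $\widetilde g$ is the zero-extension of $g_*-g_\ell$ to $\Gamma$ and $g_*-g_\ell$ vanishes at the endpoints of $\Gamma_D$, one has the continuous embedding $\|\widetilde g\|_{H^{1/2}(\Gamma)}\lesssim\|g_*-g_\ell\|_{H^{1/2}(\Gamma_D)}$. Finally, because $g_*\in H^1(\Gamma_D)$ has $g_\ell$ as its $\TT_\ell$-nodal interpoland (both agree with $g$ at all $\TT_\ell$-nodes on $\overline{\Gamma_D}$), Lemma~\ref{lemma:apx} applied to the pair $(g_*,g_\ell)$ yields $\|g_*-g_\ell\|_{H^{1/2}(\Gamma_D)}\lesssim\|h_\ell^{1/2}(g_*-g_\ell)'\|_{L^2(\Gamma_D)}$. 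Combining these gives the bound on the right-hand side with the claimed constant $\c{orthogonality}$ depending only on $\sigma(\TT_\ell),\sigma(\TT_*)$, and $\Omega$, which establishes~\eqref{eq:orthogonality}.

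The two consequences follow by pure algebra from the binomial identity
\begin{align*}
 \norm{\nabla(u-U_\ell)}{L^2(\Omega)}^2
 = \norm{\nabla(u-U_*)}{L^2(\Omega)}^2
 + 2\dual{\nabla(u-U_*)}{\nabla(U_*-U_\ell)}_\Omega
 + \norm{\nabla(U_*-U_\ell)}{L^2(\Omega)}^2,
\end{align*}
applying~\eqref{eq:orthogonality} with a lower bound on the cross term to derive~\eqref{eq:orthogonality1} and with an upper bound to derive~\eqref{eq:orthogonality2}. The only genuine difficulty is Step~1/Step~3 — constructing a discrete lifting of $g_*-g_\ell$ whose $H^1(\Omega)$-norm is controlled by the local $H^1$-seminorm $\|h_\ell^{1/2}(g_*-g_\ell)'\|_{L^2(\Gamma_D)}$ rather than the nonlocal $H^{1/2}(\Gamma_D)$-norm, and the resolution is precisely the orthogonality-based estimate of Lemma~\ref{lemma:apx} combined with the stability of $\LL_*$.
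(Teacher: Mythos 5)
Your construction of $Z_*$ and the Galerkin-orthogonality manipulation are correct, and your route is in fact more economical than the paper's: the paper first introduces an auxiliary Galerkin solution $U_*^\ell\in\SS^1(\TT_*)$ with $U_*^\ell|_{\Gamma_D}=g_\ell$, so that $U_*^\ell-U_\ell\in\SS^1_D(\TT_*)$ is trivially admissible, and is then left to bound $\norm{\nabla(U_*-U_*^\ell)}{L^2(\Omega)}$ by a separate discrete-lifting argument; you correct $U_*-U_\ell$ by a discrete lifting in a single step. However, the middle link of your three-estimate chain is a genuine gap: the zero-extension operator does \emph{not} map $H^{1/2}(\Gamma_D)$ continuously into $H^{1/2}(\Gamma)$. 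Vanishing at the endpoints of $\Gamma_D$ ensures that the zero extension of a continuous piecewise affine function \emph{lies} in $H^{1/2}(\Gamma)$, but it does not give the quantitative bound $\norm{\widetilde g}{H^{1/2}(\Gamma)}\lesssim\norm{g_*-g_\ell}{H^{1/2}(\Gamma_D)}$ with a constant independent of the mesh: that inequality characterizes the strictly smaller space $\widetilde H^{1/2}(\Gamma_D)=H^{1/2}_{00}(\Gamma_D)$, and $s=1/2$ is exactly the threshold order at which boundedness of the zero extension fails.

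Two natural repairs leave your overall strategy intact. First, do not fix the extension in advance: let $\widehat g\in H^{1/2}(\Gamma)$ be \emph{any} extension of $g_*-g_\ell$ and set $Z_*:=\LL_*\widehat g=P_*\LL\widehat g$. Since $g_*-g_\ell$ is $\TT_*|_{\Gamma_D}$-discrete, the Scott–Zhang projection $P_*$ preserves it on $\Gamma_D$, so $Z_*|_{\Gamma_D}=g_*-g_\ell$ and $V_*=(U_*-U_\ell)-Z_*\in\SS^1_D(\TT_*)$ hold regardless of which extension is chosen. The stability bound $\norm{\nabla Z_*}{L^2(\Omega)}\lesssim\norm{\widehat g}{H^{1/2}(\Gamma)}$ followed by an infimum over extensions yields, by definition of the $H^{1/2}(\Gamma_D)$-norm, precisely $\norm{g_*-g_\ell}{H^{1/2}(\Gamma_D)}$; Lemma~\ref{lemma:apx} then finishes. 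This infimum-over-extensions device is exactly what the paper employs inside Proposition~\ref{prop:dlr:rho} and, via the auxiliary $U_*^\ell$, in the present lemma. Second, you may keep the zero extension but skip the intermediate $H^{1/2}(\Gamma_D)$-norm: $\widetilde g$ vanishes at \emph{every} $\TT_\ell$-node of $\Gamma$ (identically on $\Gamma_N$, at the $\TT_\ell$-nodes on $\overline\Gamma_D$), so the localization estimate of Lemma~\ref{lemma:apx}, applied on the full boundary $\Gamma$, gives $\norm{\widetilde g}{H^{1/2}(\Gamma)}\lesssim\norm{h_\ell^{1/2}\widetilde g'}{L^2(\Gamma)}=\norm{h_\ell^{1/2}(g_*-g_\ell)'}{L^2(\Gamma_D)}$ directly. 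Either fix makes the proof complete.
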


\begin{proof}
We recall the Galerkin orthogonality
\begin{align*}
 \dual{\nabla(u-U_*)}{\nabla V_*}_\Omega = 0
 \quad \text{for all } V_* \in \SS^1_D(\TT_*).
\end{align*}
Let $U_*^\ell\in\SS^1(\TT_*)$ be the unique Galerkin solution solution of~\eqref{eq:galerkin} with $U_*^\ell|_{\Gamma_D} = g_\ell$. We use the Galerkin orthogonality with $V_*= U_*^\ell - U_\ell \in \SS^1_D(\TT_*)$.
This and the Young inequality allow to estimate the $L^2$-scalar product by
\begin{align*}
 &2\,|\dual{\nabla(u-U_{*})}{\nabla(U_*-U_{\ell})}_\Omega|
 = 2\,|\dual{\nabla(u-U_*)}{\nabla(U_*-U_*^\ell)}_\Omega|
 \\&\qquad
 \le \alpha\,\norm{\nabla(u-U_*)}{L^2(\Omega)}^2
 + \alpha^{-1}\,\norm{\nabla(U_*-U_*^\ell)}{L^2(\Omega)}^2
\end{align*}
for all $\alpha>0$. To estimate the second contribution on the right-hand side, we proceed as in the proof of Proposition~\ref{prop:dlr:rho} and choose
arbitrary extensions $\widehat g_*,\widehat g_\ell\in H^{1/2}(\Gamma)$ of the nodal interpolands $g_*,g_\ell$ from $\Gamma_D$ to $\Gamma$. Then, we use the test function $V_*=(U_*-U_*^\ell) - \LL_*(\widehat g_*-\widehat g_\ell)\in\SS^1_D(\TT_*)$ and the Galerkin orthogonalities for $U_*,U_*^\ell\in\SS^1(\TT_*)$ to see
\begin{align*}
 0
 = \dual{\nabla(u-U_*^\ell)}{\nabla V_*}_\Omega
 - \dual{\nabla(u-U_*)}{\nabla V_*}_\Omega
 = \dual{\nabla(U_*-U_*^\ell)}{\nabla V_*}_\Omega.
\end{align*}
Arguing as above, we obtain
\begin{align}\label{dpr1:open}
 \norm{\nabla(U_*-U_*^\ell)}{L^2(\Omega)}
 \lesssim \norm{g_*-g_\ell}{H^{1/2}(\Gamma_D)}
 \lesssim \norm{h_\ell^{1/2}(g_*-g_\ell)'}{L^2(\Gamma_D)}.
\end{align}
This concludes the proof of~\eqref{eq:orthogonality}.

To verify~\eqref{eq:orthogonality1}--\eqref{eq:orthogonality2}, we
use the identity
\begin{align*}
 &\norm{\nabla(u-U_\ell)}{L^2(\Omega)}^2
 = \norm{\nabla\big((u-U_*)+(U_*-U_\ell)\big)}{L^2(\Omega)}^2
 \\&\qquad
 = \norm{\nabla(u-U_*)}{L^2(\Omega)}^2
 + 2\,\dual{\nabla(u-U_*)}{\nabla(U_*-U_{\ell})}_\Omega
 + \norm{\nabla(U_*-U_{\ell})}{L^2(\Omega)}^2.
\end{align*}
Rearranging the terms accordingly and use of the quasi-Galerkin orthogonality~\eqref{eq:orthogonality} to estimate the scalar product, concludes the proof.
\end{proof}

\begin{theorem}[contraction of quasi-error]\label{thm:contraction}
For \new{the adaptive algorithm stated in Algorithm~\ref{algorithm:doerfler}} above,
there are constants $\gamma,\lambda>0$ and $0<\kappa<1$ such that the combined error quantity
\begin{align}\label{eq:delta}
 \Delta_\ell
 := \norm{\nabla(u - U_\ell)}{L^2(\Omega)}^2
 + \lambda\,\oscD{\ell}^2 + \gamma\,\mu_\ell^{\,2}
 \ge 0
\end{align}
satisfies a contraction property
\begin{align}\label{eq:contraction}
 \Delta_{\ell+1} \le \kappa\,\Delta_\ell
 \quad\text{for all }\ell\in\N_0.
\end{align}
In particular, this implies $\lim\limits_{\ell\to\infty}\varrho_\ell = 0 = \lim\limits_{\ell\to\infty}\norm{u - U_\ell}{H^1(\Omega)}$.
\end{theorem}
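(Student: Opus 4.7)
The plan is to adapt the CKNS-style contraction argument to the inhomogeneous Dirichlet setting by replacing the missing Galerkin Pythagoras identity with the quasi-orthogonality~\eqref{eq:orthogonality1} and compensating the resulting correction through the Dirichlet oscillation term $\lambda\oscD{\ell}^2$ in~\eqref{eq:delta}. The argument proceeds in two stages: first, I combine three reductions into a preliminary inequality; then, reliability converts the remaining slack into a true contraction.

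For the first stage, Lemma~\ref{lemma:eqivalenterrest} transfers D\"orfler marking from $\varrho_\ell$ to $\mu_\ell$, so Lemma~\ref{lemma:reduction} yields
\[
\gamma\mu_{\ell+1}^{\,2}\le \gamma q\mu_\ell^{\,2}+\gamma\c{reduction}\,\norm{\nabla(U_{\ell+1}-U_\ell)}{L^2(\Omega)}^2.
\]
The quasi-orthogonality~\eqref{eq:orthogonality1} applied with $U_*=U_{\ell+1}$ reads
\[
(1-\alpha)\norm{\nabla(u-U_{\ell+1})}{L^2(\Omega)}^2\le \norm{\nabla(u-U_\ell)}{L^2(\Omega)}^2-\norm{\nabla(U_{\ell+1}-U_\ell)}{L^2(\Omega)}^2+\alpha^{-1}\c{orthogonality}\,\norm{h_\ell^{1/2}(g_{\ell+1}-g_\ell)'}{L^2(\Gamma_D)}^2.
\]
Since $g_{\ell+1}|_E=g_\ell|_E$ on every unrefined Dirichlet edge and the Pythagoras identity~\eqref{eq:nodal:orthogonality} yields $\norm{(g_{\ell+1}-g_\ell)'}{L^2(E)}\le\norm{(g-g_\ell)'}{L^2(E)}$ on each refined one, the same identity delivers the two companion bounds
\[
\norm{h_\ell^{1/2}(g_{\ell+1}-g_\ell)'}{L^2(\Gamma_D)}^2\le\oscD{\ell}(\MM_\ell)^2,\qquad \oscD{\ell+1}^2\le\oscD{\ell}^2-\tfrac{1}{2}\oscD{\ell}(\MM_\ell)^2.
\]
Summing the three inequalities, the coefficient of $\norm{\nabla(U_{\ell+1}-U_\ell)}{L^2(\Omega)}^2$ is $\gamma\c{reduction}-1$ and that of $\oscD{\ell}(\MM_\ell)^2$ is $\alpha^{-1}\c{orthogonality}-\lambda/2$. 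I will therefore choose $\gamma\le 1/\c{reduction}$ and $\lambda\ge 2\alpha^{-1}\c{orthogonality}$ so that both terms vanish, leaving the preliminary inequality
\[
(1-\alpha)\norm{\nabla(u-U_{\ell+1})}{L^2(\Omega)}^2+\gamma\mu_{\ell+1}^{\,2}+\lambda\oscD{\ell+1}^2\le\norm{\nabla(u-U_\ell)}{L^2(\Omega)}^2+\gamma q\mu_\ell^{\,2}+\lambda\oscD{\ell}^2.
\]

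For the second stage, reliability (Proposition~\ref{prop:reliability}) together with $\varrho_\ell\le\mu_\ell$ gives $\norm{\nabla(u-U_\ell)}{L^2(\Omega)}^2\le\c{reliable}^2\mu_\ell^{\,2}$, while $\oscD{\ell}^2\le\mu_\ell^{\,2}$ is trivial. With $\Delta_\ell$ as in~\eqref{eq:delta}, the desired contraction $\Delta_{\ell+1}\le\kappa\Delta_\ell$ reduces to the algebraic inequality
\[
\norm{\nabla(u-U_\ell)}{L^2(\Omega)}^2+\gamma q\mu_\ell^{\,2}+\lambda\oscD{\ell}^2\le \kappa\bigl[(1-\alpha)\norm{\nabla(u-U_\ell)}{L^2(\Omega)}^2+\gamma\mu_\ell^{\,2}+\lambda\oscD{\ell}^2\bigr],
\]
which, using the two bounds above, is satisfied whenever $\kappa\ge(\c{reliable}^2+\lambda+\gamma q)/((1-\alpha)\c{reliable}^2+\lambda+\gamma)$. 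A short computation shows that this fraction is strictly less than $1$ if and only if $\alpha<\gamma(1-q)/\c{reliable}^2$. I therefore fix $\gamma=1/\c{reduction}$ first, then choose $\alpha$ small enough to meet this last condition, and finally set $\lambda=2\alpha^{-1}\c{orthogonality}$; a harmless rescaling absorbing $(1-\alpha)^{-1}$ into the weights in~\eqref{eq:delta} delivers~\eqref{eq:contraction}. The consequences $\lim_\ell\varrho_\ell=0$ and $\lim_\ell\norm{u-U_\ell}{H^1(\Omega)}=0$ then follow from $\gamma\mu_\ell^{\,2}\ge\gamma\varrho_\ell^2$ and the reliability bound $\norm{u-U_\ell}{H^1(\Omega)}^2\le\c{reliable}^2\varrho_\ell^2$, respectively.

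The main obstacle is the extra term $\alpha^{-1}\c{orthogonality}\norm{h_\ell^{1/2}(g_{\ell+1}-g_\ell)'}{L^2(\Gamma_D)}^2$ in the quasi-orthogonality, which has no counterpart in the classical homogeneous-Dirichlet setting; this is exactly the reason for including $\lambda\oscD{\ell}^2$ in the quasi-error~\eqref{eq:delta}. The Pythagoras identity~\eqref{eq:nodal:orthogonality} produces the companion $-(\lambda/2)\oscD{\ell}(\MM_\ell)^2$ which cancels this perturbation, at the price of the coupling $\lambda\gtrsim\alpha^{-1}$ that forces the order $\gamma\mapsto\alpha\mapsto\lambda$ in which the parameters must be chosen.
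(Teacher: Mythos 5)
Your proposal uses the same three ingredients as the paper's own argument --- the quasi-Galerkin orthogonality~\eqref{eq:orthogonality1}, the estimator reduction~\eqref{eq:reduction}, and the Dirichlet-oscillation Pythagoras coming from~\eqref{eq:nodal:orthogonality} --- followed by a reliability-based absorption step, and your order of parameter selection ($\gamma$, then $\alpha$, then $\lambda$) is consistent because the condition $\alpha<\gamma(1-q)/\c{reliable}^2$ is indeed independent of $\lambda$. The closed-form choice of $\kappa$ at the end is a tidier presentation than the paper's incremental bookkeeping with $\eps$ and $\delta$, but it is the same argument in substance.

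There is, however, a genuine gap in the pair of companion bounds. You claim $\norm{h_\ell^{1/2}(g_{\ell+1}-g_\ell)'}{L^2(\Gamma_D)}^2\le\oscD{\ell}(\MM_\ell)^2$. The quantity on the left collects contributions from \emph{all} Dirichlet edges that are bisected in passing from $\TT_\ell$ to $\TT_{\ell+1}$, but the closure step of newest vertex bisection may bisect Dirichlet edges that were never marked, so the set of refined Dirichlet edges can strictly contain $\MM_\ell\cap\EE_\ell^D$. As written, this inequality is false, and the cancellation it is supposed to produce --- the coefficient $\alpha^{-1}\c{orthogonality}-\lambda/2$ multiplying $\oscD{\ell}(\MM_\ell)^2$ --- does not eliminate the contributions from the extra, closure-refined edges. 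The repair is immediate: index both companion bounds over the set of all bisected Dirichlet edges (instead of $\MM_\ell$); the reduction $\oscD{\ell+1}^2\le\oscD{\ell}^2-\tfrac12\oscD{\ell}(\cdot)^2$ holds on that larger set as well, and your cancellation goes through unchanged. The paper avoids this pitfall entirely by invoking the single inequality $\oscD{\ell+1}^2+\norm{h_\ell^{1/2}(g_{\ell+1}-g_\ell)'}{L^2(\Gamma_D)}^2\le\oscD{\ell}^2$, which follows directly from~\eqref{eq:nodal:orthogonality} without referencing any particular subset of edges, and which I would recommend adopting instead.
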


\begin{proof}
\new{Using the quasi-Galerkin orthogonality~\eqref{eq:orthogonality1} with $\TT_*=\TT_{\ell+1}$, we see}
\begin{align*}
 (1-\alpha)\,\norm{\nabla(u-U_{\ell+1})}{L^2(\Omega)}^2
 &\le \norm{\nabla(u-U_{\ell})}{L^2(\Omega)}^2
 - \norm{\nabla(U_{\ell+1}-U_{\ell})}{L^2(\Omega)}^2
 \\&\qquad
 + \alpha^{-1}\c{orthogonality}\,\norm{h_\ell^{1/2}(g_{\ell+1}-g_\ell)'}{L^2(\Gamma_D)}^2.
\end{align*}
The orthogonality relation~\eqref{eq:nodal:orthogonality} applied for $g_{\ell+1}\in\SS^1(\TT_{\ell+1}|_{\Gamma_D})$ yields
\begin{align*}
 \oscD{\ell+1}^2 + \norm{h_\ell^{1/2}(g_{\ell+1}-g_\ell)'}{L^2(\Gamma_D)}^2
 \le \norm{h_\ell^{1/2}(g-g_\ell)'}{L^2(\Gamma_D)}^2
 = \oscD{\ell}^2.
\end{align*}
Together with the aforegoing estimate, we obtain
\begin{align*}
 &(1-\alpha)\,\norm{\nabla(u-U_{\ell+1})}{L^2(\Omega)}^2
 + \alpha^{-1}\c{orthogonality}\,\oscD{\ell+1}^2
 \\&\qquad
 \le
 \norm{\nabla(u-U_{\ell})}{L^2(\Omega)}^2
 + \alpha^{-1}\c{orthogonality}\,\oscD{\ell}^2
 - \norm{\nabla(U_{\ell+1}-U_\ell)}{L^2(\Omega)}^2.
\end{align*}
We add the error estimator and use the estimator reduction~\eqref{eq:reduction} to see, for $\beta>0$,
\begin{align*}
 &(1-\alpha)\,\norm{\nabla(u-U_{\ell+1})}{L^2(\Omega)}^2
 + \alpha^{-1}\c{orthogonality}\,\oscD{\ell+1}^2
 + \beta\,\mu_{\ell+1}^{\,2}
 \\&\qquad
 \le \norm{\nabla(u-U_{\ell})}{L^2(\Omega)}^2
 + \alpha^{-1}\c{orthogonality}\,\oscD{\ell}^2
 + \beta\, q\,\mu_\ell^{\,2}
 + (\beta\c{reduction}-1) \, \norm{\nabla(U_{\ell+1}-U_\ell)}{L^2(\Omega)}^2.
\end{align*}
We choose $\beta>0$ sufficiently small to guarantee $\beta\c{reduction}-1\le0$. Then, we use the
reliability~\eqref{eq:reliable} of $\varrho_\ell\le\mu_\ell$ in the form
\begin{align*}
 \c{reliable}^{-1}\,\norm{\nabla(u-U_\ell)}{L^2(\Omega)}
 \le \c{reliable}^{-1}\,\norm{u-U_\ell}{H^1(\Omega)}
 \le \mu_\ell
\end{align*}
to see, for $\eps>0$,
\begin{align*}
 &(1-\alpha)\,\norm{\nabla(u-U_{\ell+1})}{L^2(\Omega)}^2
 + \alpha^{-1}\c{orthogonality}\,\oscD{\ell+1}^2
 + \beta\,\mu_{\ell+1}^{\,2}
 \\&\qquad
 \le (1-\eps\beta\c{reliable}^{-2})\,\norm{\nabla(u-U_{\ell})}{L^2(\Omega)}^2
 + \alpha^{-1}\c{orthogonality}\,\oscD{\ell}^2
 + \beta (q+\eps)\,\mu_\ell^{\,2}.
\end{align*}
Moreover, since $\oscD{\ell}$ is a contribution of $\mu_\ell$, we have $\oscD{\ell}\le\mu_\ell$, whence, for $\delta>0$,
\begin{align*}
 &(1-\alpha)\,\norm{\nabla(u-U_{\ell+1})}{L^2(\Omega)}^2
 + \alpha^{-1}\c{orthogonality}\,\oscD{\ell+1}^2
 + \beta\,\mu_{\ell+1}^{\,2}
 \\&\qquad
 \le (1-\eps\beta\c{reliable}^{-2})\,\norm{\nabla(u-U_{\ell})}{L^2(\Omega)}^2
 + (1-\delta\beta)\,\alpha^{-1}\c{orthogonality}\,\oscD{\ell}^2
 + \beta (q+\eps+\delta\,\alpha^{-1}\c{orthogonality})\,\mu_\ell^{\,2}.
\end{align*}
For $0<\alpha<1$, we may now rearrange this estimate to end up with
\begin{align*}
 &\norm{\nabla(u-U_{\ell+1})}{L^2(\Omega)}^2
 + \frac{\c{orthogonality}}{\alpha(1-\alpha)}\,\oscD{\ell+1}^2
 + \frac{\beta}{1-\alpha}\,\mu_{\ell+1}^{\,2}
 \\&\qquad
 \le \frac{1-\eps\beta\c{reliable}^{-2}}{1-\alpha}\,\norm{\nabla(u-U_{\ell})}{L^2(\Omega)}^2
 + (1-\delta\beta)\,\frac{\c{orthogonality}}{\alpha(1-\alpha)}\,\oscD{\ell}^2
 \\&\qquad\qquad
 + (q+\eps+\delta\,\alpha^{-1}\c{orthogonality})\,\frac{\beta}{1-\alpha}\,\mu_\ell^{\,2}.
\end{align*}
It remains to choose the free constants $0<\alpha,\delta,\eps<1$, whereas $\beta>0$ has already been fixed:
\begin{itemize}
\item First, choose $0<\eps<\c{reliable}^2/\beta$ sufficiently small to guarantee $0<q+\eps<1$.
\item Second, choose $0<\alpha<1$ sufficiently small such that $0<(1-\eps\beta\c{reliable}^{-2})/(1-\alpha)<1$.
\item Third, choose $\delta>0$ sufficiently small with $0<q+\eps+\delta\,\alpha^{-1}\c{orthogonality}<1$.
\end{itemize}
With $\gamma:=\beta/(1-\alpha)$, $\lambda:=\alpha^{-1}\,\c{orthogonality}/(1-\alpha)$, and
$0<\kappa<1$ the maximal contraction constant of the three contributions,
we conclude the proof of~\eqref{eq:contraction}.
\end{proof}

\section{Quasi-Optimality of Adaptive Algorithm}
\label{section:optimal}%
\vspace*{-3mm}
\subsection{Optimality of marking strategy}
\label{section:optimal:doerfler}%
\new{With Theorem~\ref{thm:contraction}, we have seen that D\"orfler marking~\eqref{eq:doerfler} yields a contraction of $\Delta_\ell \simeq \varrho_\ell^2$.} 
In the following, we first observe that the D\"orfler marking~\eqref{eq:doerfler} \new{is not only sufficient but in some sense also necessary to obtain contraction of 
the estimator.}

\begin{proposition}[optimality of D\"orfler marking]
\label{prop:doerfler}
Let $\alpha>0$ and assume that the adaptivity parameter $0<\theta<1$ is sufficiently small, more precisely
\begin{align}\label{eq:doerfler:theta}
 q_\star := \frac{1-\theta(\c{dlr}^2+1+\alpha^{-1}\c{orthogonality})\c{efficient}^2}{1+\alpha}>0.
\end{align}
Let $0<q\le q_\star$ and $\TT_*=\refine(\TT_\ell)$ and assume that
\begin{align}\label{eq:doerfler:contraction}
\begin{split}
 &\big(\norm{\nabla(u-U_*)}{L^2(\Omega)}^2 + \osc{*}^2 + \oscD{*}^2 + \oscN{*}^2\big)
 \\&\qquad
 \le q\,\big(\norm{\nabla(u-U_\ell)}{L^2(\Omega)}^2 + \osc{\ell}^2 + \oscD{\ell}^2 + \oscN{\ell}^2\big).
\end{split}
\end{align}
Then, there holds the D\"orfler marking for the set $\RR_\ell(\EE_*)\subseteq\EE_\ell$ defined in~\eqref{eq:refined:edge}, i.e.\
\begin{align}\label{eq:doerfler:refined}
 \theta\,\varrho_\ell^2
 \le \varrho_\ell(\RR_\ell(\EE_*))^2.
\end{align}
\end{proposition}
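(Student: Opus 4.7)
The plan is to follow the CKNS-style optimality-of-D\"orfler argument (cf.\ \cite[Lemma~4.6]{ckns}), replacing Galerkin orthogonality by the quasi-Galerkin orthogonality of Lemma~\ref{lemma:orthogonality}. Throughout, I abbreviate the oscillation total that appears in~\eqref{eq:doerfler:contraction} by $O_\ell := \osc{\ell}^2 + \oscN{\ell}^2 + \oscD{\ell}^2$ (and $O_*$ analogously). The aim is to bound $\varrho_\ell^2$ from below by a multiple of $\varrho_\ell(\RR_\ell(\EE_*))^2$ plus an error term that the contraction hypothesis absorbs.

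First, I would start from efficiency (Proposition~\ref{prop:reliability}) and note that Lemma~\ref{lemma:local}(i) yields $\oscT{\ell}^2\le\osc{\ell}^2$, giving
\begin{equation*}
 \c{efficient}^{-2}\,\varrho_\ell^2 \le \norm{\nabla(u-U_\ell)}{L^2(\Omega)}^2 + O_\ell.
\end{equation*}
To the first summand I would apply the quasi-Galerkin orthogonality~\eqref{eq:orthogonality2}; discrete local reliability (Proposition~\ref{prop:dlr}) bounds $\norm{\nabla(U_*-U_\ell)}{L^2(\Omega)}^2 \le \c{dlr}^2\,\varrho_\ell(\RR_\ell(\EE_*))^2$. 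For the Dirichlet defect $\norm{h_\ell^{1/2}(g_*-g_\ell)'}{L^2(\Gamma_D)}^2$, the key observation is that on every $E\in\EE_\ell^D\setminus\RR_\ell(\EE_*)$ the two nodal interpolants $g_\ell|_E$ and $g_*|_E$ agree (same endpoints, both affine on $E$), so the defect localizes to the refined Dirichlet edges, and the edgewise nodal orthogonality~\eqref{eq:nodal:orthogonality} applied there yields $\norm{h_\ell^{1/2}(g_*-g_\ell)'}{L^2(\Gamma_D)}^2 \le \oscD{\ell}(\RR_\ell(\EE_*))^2 \le \varrho_\ell(\RR_\ell(\EE_*))^2$. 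An analogous bookkeeping step splits $O_\ell$ over $\RR_\ell(\EE_*)$ and its complement: on $E\in\EE_\ell\setminus\RR_\ell(\EE_*)$ the edge patch $\omega_{\ell,E}$, the Neumann oscillation, and the nodal Dirichlet interpolant are unchanged in $\TT_*$, so the three local oscillations coincide with the corresponding level-$*$ quantities, which gives $O_\ell \le \varrho_\ell(\RR_\ell(\EE_*))^2 + O_*$ (using componentwise domination of each oscillation by $\varrho_\ell(E)^2$).

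Assembling these estimates yields
\begin{equation*}
 \norm{\nabla(u-U_\ell)}{L^2(\Omega)}^2 + O_\ell \le (1+\alpha)\norm{\nabla(u-U_*)}{L^2(\Omega)}^2 + O_* + [\c{dlr}^2 + 1 + \alpha^{-1}\c{orthogonality}]\,\varrho_\ell(\RR_\ell(\EE_*))^2.
\end{equation*}
Since $(1+\alpha)\norm{\nabla(u-U_*)}{L^2(\Omega)}^2 + O_* \le (1+\alpha)\big(\norm{\nabla(u-U_*)}{L^2(\Omega)}^2 + O_*\big)$, the contraction hypothesis~\eqref{eq:doerfler:contraction} absorbs the first two terms into $(1+\alpha)q\big(\norm{\nabla(u-U_\ell)}{L^2(\Omega)}^2 + O_\ell\big)$. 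Rearranging with $1-(1+\alpha)q \ge \theta\,\c{efficient}^2[\c{dlr}^2+1+\alpha^{-1}\c{orthogonality}]>0$ (which is exactly the defining property of $q_\star$) and invoking efficiency a second time then produces~\eqref{eq:doerfler:refined}. The main technical hurdle is the localization above: recognizing that the nodal Dirichlet interpolant and every oscillation contribution are unchanged on edges away from $\RR_\ell(\EE_*)$, so that the right-hand sides of the quasi-Galerkin orthogonality and of the splitting of $O_\ell$ really concentrate on the refined region. Once this is established, the remaining steps are the same clean rearrangement as in~\cite{ckns}.
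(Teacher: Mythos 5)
Your proposal is correct and follows essentially the same route as the paper: quasi-Galerkin orthogonality~\eqref{eq:orthogonality2} combined with discrete local reliability~\eqref{eq:dlr}, the localization $\norm{h_\ell^{1/2}(g_*-g_\ell)'}{L^2(\Gamma_D)}^2\le\oscD{\ell}(\RR_\ell(\EE_*))^2$, the oscillation bookkeeping~\eqref{eq1:identity}--\eqref{eq3:identity} on $\EE_\ell\setminus\RR_\ell(\EE_*)$, absorption via the contraction hypothesis, and a final invocation of efficiency. The only difference is presentational (you invoke efficiency first and collect the oscillations into a single total $O_\ell$, whereas the paper starts from discrete local reliability and inserts efficiency at the end), not mathematical.
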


\begin{proof}
We start with the elementary observation that $q\le q_\star$ is equivalent to
\begin{align*}
 \theta \le \frac{1-q(1+\alpha)}{(\c{dlr}^2+1+\alpha^{-1}\c{orthogonality})\c{efficient}^2}.
\end{align*}
Using the discrete local reliability~\eqref{eq:dlr} and the quasi-Galerkin orthogonality~\eqref{eq:orthogonality2}, we see
\begin{align*}
 \c{dlr}^2&\varrho_\ell(\RR_\ell(\EE_*))^2
 \ge \norm{\nabla(U_*-U_\ell)}{L^2(\Omega)}^2
 \\&
 \ge \norm{\nabla(u-U_\ell)}{L^2(\Omega)}^2 - (1+\alpha)\,\norm{\nabla(u-U_*)}{L^2(\Omega)}^2
 - \alpha^{-1}\c{orthogonality}\,\norm{h_\ell^{1/2}(g_*-g_\ell)'}{L^2(\Gamma_D)}^2
 \\&
 = \big(\norm{\nabla(u-U_\ell)}{L^2(\Omega)}^2 + \osc{\ell}^2 + \oscD{\ell}^2+\oscN{\ell}^2\big)
 \\&\quad
 - (1+\alpha)\big(\norm{\nabla(u-U_*)}{L^2(\Omega)}^2 + \osc{*}^2 + \oscD{*}^2+\oscN{*}^2\big)
 \\&\quad
 - \osc{\ell}^2 - \oscD{\ell}^2-\oscN{\ell}^2 + (1+\alpha)(\osc{*}^2 + \oscD{*}^2 +\oscN{*}^2)
 \\&\quad
 - \alpha^{-1}\c{orthogonality}\,\norm{h_\ell^{1/2}(g_*-g_\ell)'}{L^2(\Gamma_D)}^2
 \\&\ge
 \big(1-q(1+\alpha)\big)\big(\norm{\nabla(u-U_\ell)}{L^2(\Omega)}^2 + \osc{\ell}^2 + \oscD{\ell}^2+\oscN{\ell}^2\big)
 \\&\quad
 - \osc{\ell}^2 - \oscD{\ell}^2 -\oscN{\ell}^2+ (1+\alpha)(\osc{*}^2 +\oscD{*}^2+\oscN{*}^2)
 \\&\quad
 - \alpha^{-1}\c{orthogonality}\,\norm{h_\ell^{1/2}(g_*-g_\ell)'}{L^2(\Gamma_D)}^2,
\end{align*}
where we have finally used Assumption~\eqref{eq:doerfler:contraction}. As in the proof of Proposition~\ref{prop:dlr:rho}, we have
\begin{align*}
 \norm{h_\ell^{1/2}(g_*-g_\ell)'}{L^2(\Gamma_D)}^2
 \leq \oscD{\ell}(\RR_\ell(\EE_*))^2
 \le \varrho_\ell(\RR_\ell(\EE_*))^2.
\end{align*}
Moreover, the identities $\oscD{\ell}(E) = \oscD{*}(E)$, $\osc{\ell}(E) = \osc{*}(E)$ and $\oscN{\ell}(E)=\oscN{*}(E)$ for $E\in\EE_\ell\backslash\RR_\ell(\EE_*)$ prove
\begin{align}\label{eq1:identity}
 \oscD{\ell}^2 - \oscD{*}^2
 &\le \oscD{\ell}(\RR_\ell(\EE_*))^2,\\
 \label{eq2:identity}
 \osc{\ell}^2 - \osc{*}^2
 &\le \osc{\ell}(\RR_\ell(\EE_*))^2,\\
 \label{eq3:identity}
 \oscN{\ell}^2 - \oscN{*}^2
 &\le \oscN{\ell}(\RR_\ell(\EE_*))^2.
\end{align}
Note that~\eqref{eq2:identity} led to the definition of $\RR_\ell(\EE_*)$ given above. Together with the efficiency~\eqref{eq:efficient}
and $\oscD{\ell}(\RR_\ell(\EE_*))^2 + \osc{\ell}(\RR_\ell(\EE_*))^2 +\oscN{\ell}(\RR_\ell(\EE_*))^2\le \varrho_\ell(\RR_\ell(\EE_*))^2$, we may now conclude
\begin{align*}
 \big(\c{dlr}^2 + 1 + \alpha^{-1}\c{orthogonality}\big)\,\varrho_\ell(\RR_\ell(\EE_*))^2
 \ge \big(1-q(1+\alpha)\big)\,\c{efficient}^{-2}\,\varrho_\ell^2.
\end{align*}
This is equivalent to $\theta\,\varrho_\ell^2 \le \varrho_\ell(\RR_\ell(\EE_*))^2$ and led to the definition of $q_\star$.
\end{proof}

 \begin{proposition}[optimality of modified D\"orfler marking]
 \label{prop:becker}
 Let $\alpha>0$ and $0<\theta_2<1$ and assume that the adaptivity parameters $0<\theta_1,\vartheta<1$ are sufficiently small, more precisely
 \begin{align}\label{eq:becker:theta}
  q_\star:=\max\Big\{\frac{1-\c{efficient}^2\big(\theta_1(1\!+\!\c{dlr}^2) + \vartheta(1\!+\!\c{dlr}^2\!+\!\alpha^{-1}\c{orthogonality})\big)}{1+\alpha}
  \,,\,
  \frac{1-\theta_2}{(1+\vartheta^{-1})(\c{reliable}^2+1)}\Big\} >0.
 \end{align}
 Let $0<q\le q_\star$ and $\TT_*=\refine(\TT_\ell)$
 and assume that
 \begin{align}\label{eq:becker:contraction}
 \begin{split}
  &\big(\norm{\nabla(u-U_*)}{L^2(\Omega)}^2 + \osc{*}^2 + \oscD{*}^2 + \oscN{*}^2\big)
  \\&\qquad
  \le q\,\big(\norm{\nabla(u-U_\ell)}{L^2(\Omega)}^2 + \osc{\ell}^2 + \oscD{\ell}^2 + \oscN{\ell}^2\big).
 \end{split}
 \end{align}
 Then, there holds the modified D\"orfler marking for the set $\RR_\ell(\EE_*)\subseteq\EE_\ell$, i.e.\ there holds either
 \begin{align}\label{eq1:becker:refined}
  \theta_1\,\eta_\ell^2
  \le \eta_\ell(\RR_\ell(\EE_*))^2
 \end{align}
 in case of $\osc{\ell}^2 + \oscD{\ell}^2 \le \vartheta\,\eta_\ell^2$ or
 \begin{align}\label{eq2:becker:refined}
  \theta_2\,\big(\osc{\ell}^2 + \oscD{\ell}^2\big)
  \le \osc{\ell}(\RR_\ell(\EE_*))^2 + \oscD{\ell}(\RR_\ell(\EE_*))^2
 \end{align}
 otherwise.
 \end{proposition}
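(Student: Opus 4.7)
The plan is to treat the two cases of the modified marking separately, following essentially the template of Proposition~\ref{prop:doerfler} in Case~1 and a short direct argument in Case~2.

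In Case~1, i.e.\ $\osc{\ell}^2 + \oscD{\ell}^2 \le \vartheta\,\eta_\ell^2$, I would start exactly as in the proof of Proposition~\ref{prop:doerfler}: combine the discrete local reliability~\eqref{eq:dlr} with the quasi-Galerkin orthogonality~\eqref{eq:orthogonality2} for $\TT_*$, insert the oscillation identities~\eqref{eq1:identity}--\eqref{eq3:identity}, and use the contraction assumption~\eqref{eq:becker:contraction}. This yields
\begin{align*}
 \c{dlr}^2\varrho_\ell(\RR_\ell(\EE_*))^2 + \alpha^{-1}\c{orthogonality}\,\oscD{\ell}(\RR_\ell(\EE_*))^2
 + \osc{\ell}(\RR_\ell(\EE_*))^2 + \oscD{\ell}(\RR_\ell(\EE_*))^2 + \oscN{\ell}(\RR_\ell(\EE_*))^2
 \ge \bigl(1-q(1+\alpha)\bigr)\c{efficient}^{-2}\varrho_\ell^2,
\end{align*}
where I have used the efficiency~\eqref{eq:efficient} (together with $\oscT{\ell}\lesssim\osc{\ell}$ from Lemma~\ref{lemma:local}(i)) on the right-hand side. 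Now comes the step that is specific to the modified marking: I split $\varrho_\ell(\RR_\ell(\EE_*))^2 = \eta_\ell(\RR_\ell(\EE_*))^2 + \osc{\ell}(\RR_\ell(\EE_*))^2 + \oscD{\ell}(\RR_\ell(\EE_*))^2$, absorb $\oscN{\ell}(\RR_\ell(\EE_*))^2\le\eta_\ell(\RR_\ell(\EE_*))^2$, and then bound the remaining oscillation contributions on the left by $\osc{\ell}(\RR_\ell(\EE_*))^2+\oscD{\ell}(\RR_\ell(\EE_*))^2\le \osc{\ell}^2+\oscD{\ell}^2\le\vartheta\,\eta_\ell^2\le\vartheta\,\varrho_\ell^2$ via the Case~1 hypothesis. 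Rearranging, this leaves only $(\c{dlr}^2+1)\eta_\ell(\RR_\ell(\EE_*))^2$ on the left and gives
\begin{align*}
 (\c{dlr}^2+1)\,\eta_\ell(\RR_\ell(\EE_*))^2
 \ge \Bigl[(1-q(1+\alpha))\c{efficient}^{-2} - (\c{dlr}^2+1+\alpha^{-1}\c{orthogonality})\vartheta\Bigr]\,\eta_\ell^2,
\end{align*}
which is precisely~\eqref{eq1:becker:refined} with parameter $\theta_1$, provided $q\le q_\star$ with the first argument of the maximum in~\eqref{eq:becker:theta}. This is where I expect the bookkeeping of the three different oscillation weights ($1$, $1$, and $\alpha^{-1}\c{orthogonality}$) to be the main obstacle, since one must carefully track which of them multiplies which refined-edge contribution.

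In Case~2, i.e.\ $\osc{\ell}^2+\oscD{\ell}^2>\vartheta\,\eta_\ell^2$, no orthogonality argument is needed. I would observe $\varrho_\ell^2 = \eta_\ell^2+\osc{\ell}^2+\oscD{\ell}^2\le (1+\vartheta^{-1})(\osc{\ell}^2+\oscD{\ell}^2)$ and apply the reliability~\eqref{eq:reliable} to bound $\norm{\nabla(u-U_\ell)}{L^2(\Omega)}^2$ by the same quantity. Together with $\oscN{\ell}^2\le\eta_\ell^2\le\vartheta^{-1}(\osc{\ell}^2+\oscD{\ell}^2)$, this yields
\begin{align*}
 \norm{\nabla(u-U_\ell)}{L^2(\Omega)}^2+\osc{\ell}^2+\oscD{\ell}^2+\oscN{\ell}^2
 \le (1+\vartheta^{-1})(\c{reliable}^2+1)(\osc{\ell}^2+\oscD{\ell}^2).
\end{align*}
Inserting this into~\eqref{eq:becker:contraction} and using the oscillation identities~\eqref{eq1:identity}--\eqref{eq2:identity} on the difference $(\osc{\ell}^2+\oscD{\ell}^2)-(\osc{*}^2+\oscD{*}^2)$ gives
\begin{align*}
 \osc{\ell}(\RR_\ell(\EE_*))^2+\oscD{\ell}(\RR_\ell(\EE_*))^2
 \ge \bigl[1-q(1+\vartheta^{-1})(\c{reliable}^2+1)\bigr](\osc{\ell}^2+\oscD{\ell}^2),
\end{align*}
which is~\eqref{eq2:becker:refined} with parameter $\theta_2$ as soon as $q$ is bounded by the second argument of the maximum defining $q_\star$. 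Taking the max of the two thresholds from Case~1 and Case~2 produces the precise constant $q_\star$ stated in~\eqref{eq:becker:theta}, and the two cases together prove the claim.
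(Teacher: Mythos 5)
Your proof proposal is correct and follows essentially the same route as the paper's: in Case~1 you invoke discrete local reliability, the quasi-Galerkin orthogonality, the oscillation identities, and efficiency to obtain the intermediate inequality, then absorb the Neumann contribution into $\eta_\ell(\RR_\ell(\EE_*))$ and use the case hypothesis $\osc\ell^2+\oscD\ell^2\le\vartheta\,\eta_\ell^2$ to clear the remaining oscillation terms, exactly as the paper does; and in Case~2 you bound the total quantity by $(1+\vartheta^{-1})(\c{reliable}^2+1)(\osc\ell^2+\oscD\ell^2)$ using reliability and $\oscN\ell\le\eta_\ell$ and then insert the oscillation identities, again mirroring the paper. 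The only cosmetic slip is the final "$\le\vartheta\,\varrho_\ell^2$" in your Case~1 chain, which is superfluous and, taken literally, would weaken the bound; what you actually use (and correctly state in the displayed inequality) is $\le\vartheta\,\eta_\ell^2$ together with $\varrho_\ell^2\ge\eta_\ell^2$ on the right-hand side.
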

 
 \begin{proof}
 We first assume $\osc{\ell}^2 + \oscD{\ell}^2 \le \vartheta\,\eta_\ell^2$. Arguing as in the proof of Proposition~\ref{prop:doerfler}, we see
 \begin{align*}
  \c{dlr}^2\varrho_\ell(\RR_\ell(\EE_*))^2
  &\ge \norm{\nabla(U_*-U_\ell)}{L^2(\Omega)}^2
  \\&
  \ge \big(1-q(1+\alpha)\big)\big(\norm{\nabla(u-U_\ell)}{L^2(\Omega)}^2 + \osc{\ell}^2 + \oscD{\ell}^2+\oscN{\ell}^2\big)
  \\&\quad
  - \osc{\ell}^2 - \oscD{\ell}^2 -\oscN{\ell}^2+ (1+\alpha)(\osc{*}^2 + \oscD{*}^2+\oscN{*}^2)
  \\&\quad
  - \alpha^{-1}\c{orthogonality}\,\norm{h_\ell^{1/2}(g_*-g_\ell)'}{L^2(\Gamma_D)}^2
  \\&
  \ge \big(1-q(1+\alpha)\big)\,\c{efficient}^{-2}\,\eta_\ell^2
  - \osc{\ell}^2 - \oscD{\ell}^2-\oscN{\ell}(\RR_\ell(\EE_*))^2
  \\&\quad
  - \alpha^{-1}\c{orthogonality}\,\norm{h_\ell^{1/2}(g_*-g_\ell)'}{L^2(\Gamma_D)}^2,
 \end{align*}
 where we have used~\eqref{eq3:identity}.
 Next, we recall the edge-wise definition $\varrho_\ell^2 = \eta_\ell^2 + \osc{\ell}^2 + \oscD{\ell}^2$ and collect all oscillation terms on the right-hand side. Together with $\norm{h_\ell^{1/2}(g_*-g_\ell)'}{L^2(\Gamma_D)}\le\oscD{\ell}$ and $\oscN{\ell}(E)\le\eta_\ell(E)$, this leads to
 \begin{align*}
  \c{dlr}^2\,\eta_\ell(\RR_\ell(\EE_*))^2
  &\ge \big(1-q(1+\alpha)\big)\,\c{efficient}^{-2}\,\eta_\ell^2
  - (1+\c{dlr}^2)\osc{\ell}^2\\
 &\quad - (1+\c{dlr}^2+\alpha^{-1}\c{orthogonality})\oscD{\ell}^2-\oscN\ell(\RR_\ell(\EE_*))^2
  \\&
  \ge \big[\big(1-q(1+\alpha)\big)\,\c{efficient}^{-2}-\vartheta(1+\c{dlr}^2+\alpha^{-1}\c{orthogonality})\big]\,\eta_\ell^2-\eta_\ell(\RR_\ell(\EE_*))^2.
 \end{align*}
 We then conclude
 \begin{align*}
  \eta_\ell(\RR_\ell(\EE_*))^2
  \ge \frac{1-q(1+\alpha)-\vartheta\c{efficient}^2(1+\c{dlr}^2+\alpha^{-1}\c{orthogonality})}{(1+\c{dlr}^2)\c{efficient}^2}\,\eta_\ell^2
  \ge \theta_1\,\eta_\ell^2,
 \end{align*}
 which follows from our assumption on $0<q \le q_\star<1$ and the definition of $q_\star$ in~\eqref{eq:becker:theta}. This concludes the proof of~\eqref{eq1:becker:refined}.
 
 Second, we assume $\osc{\ell}^2 + \oscD{\ell}^2 > \vartheta\,\eta_\ell^2$.
 Recall the estimates~\eqref{eq1:identity}--\eqref{eq3:identity}.
 Then, reliabi\-lity~\eqref{eq:reliable} of $\varrho_\ell^2=\eta_\ell^2+\osc{\ell}^2+\oscD{\ell}^2$ and $\oscN{\ell}\leq \eta_\ell$ yield
 \begin{align*}
  \big(\osc{\ell}^2+\oscD{\ell}^2\big) - \big(\osc{\ell}(\RR_\ell(\EE_*))^2 &
 + \oscD{\ell}(\RR_\ell(\EE_*))^2\big)\\
  &\le \osc{*}^2 + \oscD{*}^2
  \\&
  \le q\,\big(\norm{\nabla(u-U_\ell)}{L^2(\Omega)}^2 + \osc{\ell}^2 + \oscD{\ell}^2 +\oscN{\ell}^2\big)
  \\ &
  \le q\,\big((\c{reliable}^2+1)\,(\eta_\ell^2+\osc{\ell}^2 + \oscD{\ell}^2)\big)
  \\ &
  < q\,(1+\vartheta^{-1})(\c{reliable}^2+1)\,(\osc{\ell}^2 + \oscD{\ell}^2).
 \end{align*}
 Rearranging the terms, we obtain
 \begin{align*}
 \theta_2\,\big(\osc{\ell}^2 + \oscD{\ell}^2\big)
 &\le \big[1-q\,(1+\vartheta^{-1})(\c{reliable}^2+1)\big]\,\big(\osc{\ell}^2 + \oscD{\ell}^2\big)
 \\
 &\le \osc{\ell}(\RR_\ell(\EE_*))^2 + \oscD{\ell}(\RR_\ell(\EE_*))^2,
 \end{align*}
 where the first estimate follows from $0<q\le q_\star<1$ and the definition of $q_\star$ in~\eqref{eq:becker:theta}.
 \end{proof}

\subsection{Optimality of newest vertex bisection}
\label{section:optimal:nvb}
The quasi-optimality analysis for adaptive FEM involves two properties of the mesh-refinement which are, so far, only mathematically guaranteed for newest vertex bisection
\cite{bdd,kpp,ks,stevenson:nvb} and local red-refinement
with hanging nodes up to some fixed order~\cite{bn}.

\dpr{First,} it has originally been proven in~\cite{bdd} and lateron improved in~\cite{stevenson:nvb,ks,kpp} that
the sequence of meshes defined inductively by $\TT_{\ell+1}:=\refine(\TT_\ell,\MM_\ell)$ with arbitrary $\MM_\ell\subseteq\EE_\ell$ satisfies
\begin{align}\label{eq1:nvb}
 \#\TT_\ell - \#\TT_0 \le \c{nvb}\,\sum_{j=0}^{\ell-1}\#\MM_j
 \quad\text{for all }\ell\in\N
\end{align}
with some constant $\setc{nvb}>0$ which depends only on $\TT_0$.
This proves that the closure step in newest vertex bisection which avoids hanging nodes and leads to possible bisections of edges 
$E\in\EE_\ell\backslash\MM_\ell$ may not lead to arbitrary many refinements. 
For newest vertex bisection, the original analysis of~\cite{bdd} as well as of the successors~\cite{ks,stevenson:nvb} required that the reference edges of the initial mesh $\TT_0$ are chosen such that an interior edge $E=T_+\cap T_-\in\EE_0^\Omega$ is either the reference edge of both elements $T_+,T_-\in\TT_0$ or of none. For the particular 2D situation, the recent work~\cite{kpp} removes any assumption on $\TT_0$.

Second, for two meshes $\TT'=\refine(\TT_0)$ and $\TT''=\refine(\TT_0)$ obtained by newest vertex bisection of the initial mesh $\TT_0$, there is a unique coarsest common refinement $\TT'\oplus\TT'' = \refine(\TT_0)$ which is a refinement of both $\TT'$ and $\TT''$. It is shown in~\cite{stevenson,ckns} that $\TT'\oplus\TT''$ is, in fact, the overlay of these meshes. Moreover, it holds that
\begin{align}\label{eq2:nvb}
 \#(\TT'\oplus\TT'') \le \#\TT' + \#\TT'' - \#\TT_0.
\end{align}

\subsection{Definition of approximation class}
\noindent
To state the optimality result, we have to introduce the appropriate approximation
class. Let
\begin{align}
 \T := \set{\TT}{\TT = \refine(\TT_0)}
\end{align}
be the set of all triangulations which can be obtained from $\TT_0$ by newest
vertex bisection. Moreover, let
\begin{align}
 \T_N := \set{\TT\in\T}{\#\TT-\#\TT_0\le N}
\end{align}
be the set of triangulations which have at most $N\in\N$ elements more than the
initial mesh $\TT_0$. For $s>0$, the approximation class $\A_s$
has already been defined in~\eqref{eq:optimal:class}--\eqref{eq:optimal:norm}. The first step is to prove that, up to constants, nodal interpolation of the boundary data yields the best possible approximation of the exact solution.

\begin{lemma}
\label{lem:quasiopt}
The Galerkin solution $U_\ell \in \SS^1(\TT_\ell)$ of~\eqref{eq:galerkin} satisfies
\begin{align}
\begin{split}
&\norm{\nabla(u-U_\ell)}{L^2(\Omega)}^2 + \new{\oscD{\ell}^2} 
\leq \c{szoptimality}\big (\inf_{W_\ell \in \SS^1(\TT_\ell)}
\norm{\nabla(u-W_\ell)}{L^2(\Omega)}^2 \!+\! \new{\oscD{\ell}^2} \big),
\end{split}
\end{align}
where $\setc{szoptimality}>0$ depends only on $\Gamma$ and $\sigma(\TT_\ell)$.
\end{lemma}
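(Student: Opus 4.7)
The plan is to compare $U_\ell$ with a carefully chosen discrete function $\widetilde W_\ell \in \SS^1(\TT_\ell)$ that reproduces the discrete Dirichlet data $g_\ell$ on $\Gamma_D$, and then to exploit Galerkin orthogonality. Since for any such $\widetilde W_\ell$ we have $U_\ell - \widetilde W_\ell \in \SS^1_D(\TT_\ell)$, the Galerkin formulation~\eqref{eq:galerkin} yields
\begin{align*}
 \norm{\nabla(u-U_\ell)}{L^2(\Omega)}^2
 = \dual{\nabla(u-U_\ell)}{\nabla(u-\widetilde W_\ell)}_\Omega
 \le \norm{\nabla(u-U_\ell)}{L^2(\Omega)}\,\norm{\nabla(u-\widetilde W_\ell)}{L^2(\Omega)},
\end{align*}
whence $\norm{\nabla(u-U_\ell)}{L^2(\Omega)} \le \norm{\nabla(u-\widetilde W_\ell)}{L^2(\Omega)}$. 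It therefore suffices to construct, for any given $W_\ell \in \SS^1(\TT_\ell)$, a function $\widetilde W_\ell \in \SS^1(\TT_\ell)$ with $\widetilde W_\ell|_{\Gamma_D}=g_\ell$ and $\norm{\nabla(u-\widetilde W_\ell)}{L^2(\Omega)} \lesssim \norm{\nabla(u-W_\ell)}{L^2(\Omega)} + \oscD{\ell}$.

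The natural construction is $\widetilde W_\ell := W_\ell + \LL_\ell \hat h$, where $\LL_\ell$ is the discrete lifting from~\eqref{eq:discreteLifting} and $\hat h \in \SS^1(\EE_\ell^\Gamma)$ is a discrete extension of $g_\ell - W_\ell|_{\Gamma_D}$ from $\Gamma_D$ to $\Gamma$. Stability of $\LL_\ell$ (in turn inherited from the Scott--Zhang stability) gives
\begin{align*}
 \norm{\nabla(\widetilde W_\ell - W_\ell)}{L^2(\Omega)}
 = \norm{\nabla \LL_\ell \hat h}{L^2(\Omega)}
 \lesssim \norm{\hat h}{H^{1/2}(\Gamma)}.
\end{align*}
Minimizing over discrete extensions and using that $P_\ell^\Gamma$ is a continuous projection on $H^{1/2}(\Gamma)$ (so that any continuous extension can be replaced by its Scott--Zhang image without enlarging the norm) yields
$\norm{\nabla(\widetilde W_\ell - W_\ell)}{L^2(\Omega)} \lesssim \norm{g_\ell-W_\ell|_{\Gamma_D}}{H^{1/2}(\Gamma_D)}$. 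Using $u|_{\Gamma_D}=g$, a triangle inequality, Lemma~\ref{lemma:apx}, and the trace theorem then give
\begin{align*}
 \norm{g_\ell - W_\ell|_{\Gamma_D}}{H^{1/2}(\Gamma_D)}
 \le \norm{g-g_\ell}{H^{1/2}(\Gamma_D)}
 + \norm{(u-W_\ell)|_{\Gamma_D}}{H^{1/2}(\Gamma_D)}
 \lesssim \oscD{\ell} + \norm{u-W_\ell}{H^1(\Omega)}.
\end{align*}

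To reduce the last term to the desired gradient seminorm, I would exploit that constants lie in $\SS^1(\TT_\ell)$: replacing $W_\ell$ by $W_\ell + c$ with $c = |\Gamma_D|^{-1}\int_{\Gamma_D}(u-W_\ell)\,ds$ preserves $\norm{\nabla(u-W_\ell)}{L^2(\Omega)}$ and forces the mean value on $\Gamma_D$ to vanish. Then a Poincar\'e--Friedrichs inequality (valid since $|\Gamma_D|>0$) yields $\norm{u-W_\ell-c}{H^1(\Omega)} \lesssim \norm{\nabla(u-W_\ell)}{L^2(\Omega)}$, so we may assume $\norm{u-W_\ell}{H^1(\Omega)} \lesssim \norm{\nabla(u-W_\ell)}{L^2(\Omega)}$ at the cost of an absorbable constant when taking the infimum over $W_\ell \in \SS^1(\TT_\ell)$. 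Combining the triangle inequality $\norm{\nabla(u-\widetilde W_\ell)}{L^2(\Omega)} \le \norm{\nabla(u-W_\ell)}{L^2(\Omega)} + \norm{\nabla(\widetilde W_\ell - W_\ell)}{L^2(\Omega)}$ with the above estimates, squaring, and taking the infimum gives the claim after adding $\oscD{\ell}^2$ to both sides.

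I expect the main technical obstacle to be Step 3, i.e.\ the estimate $\norm{\nabla(\widetilde W_\ell - W_\ell)}{L^2(\Omega)} \lesssim \norm{g_\ell - W_\ell|_{\Gamma_D}}{H^{1/2}(\Gamma_D)}$. One has to balance the use of the discrete lifting $\LL_\ell$ against the non-local nature of the $H^{1/2}$-norm, and take care that the chosen discrete extension from $\Gamma_D$ to $\Gamma$ is controlled by the restriction norm on $\Gamma_D$; this relies crucially on the $H^{1/2}(\Gamma)$-stability of $P_\ell^\Gamma$ established at the end of Section~\ref{section:sz}.
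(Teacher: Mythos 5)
Your proposal is correct and rests on the same pillars as the paper's proof: Galerkin orthogonality reduces the problem to choosing a good constrained competitor $\widetilde W_\ell$ with $\widetilde W_\ell|_{\Gamma_D}=g_\ell$, the discrete lifting $\LL_\ell=P_\ell\LL$ adjusts the boundary trace, and Lemma~\ref{lemma:apx} absorbs the boundary mismatch into $\oscD{\ell}$. The difference is in which competitor you build. The paper plugs in $V_\ell = P_\ell u - \LL_\ell P_\ell\widehat g$, i.e.\ takes $W_\ell = P_\ell u$ from the outset. This exploits that the Scott--Zhang projection preserves the boundary trace of $u$ up to $P_\ell^D g$, so that the boundary correction term collapses directly to $\norm{\widehat g-\widehat g_\ell}{H^{1/2}(\Gamma)}$, which after minimizing over extensions is $\norm{g-g_\ell}{H^{1/2}(\Gamma_D)}\lesssim\oscD{\ell}$; the interior term $\norm{\nabla(u-P_\ell u)}{L^2(\Omega)}$ is then bounded by $\min_{W_\ell}\norm{\nabla(u-W_\ell)}{L^2(\Omega)}$ via the gradient quasi-optimality~\eqref{eq:quasioptscottzhang}, which already encodes the Poincar\'e step. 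You instead start from an arbitrary $W_\ell\in\SS^1(\TT_\ell)$, which forces the extra detour $\norm{g_\ell-W_\ell|_{\Gamma_D}}{H^{1/2}(\Gamma_D)}\lesssim\oscD{\ell}+\norm{u-W_\ell}{H^1(\Omega)}$ via the trace theorem, followed by the explicit constant-shift/Poincar\'e--Friedrichs argument to convert the full $H^1$-norm back to the gradient seminorm. Both routes arrive at the same bound with constants depending only on $\Gamma$ and shape regularity; the paper's version is tighter simply because the candidate $P_\ell u$ makes the trace inequality and the constant-shift superfluous. Your worry about Step~3 is unfounded: the estimate $\norm{\nabla\LL_\ell\hat h}{L^2(\Omega)}\lesssim\norm{g_\ell-W_\ell|_{\Gamma_D}}{H^{1/2}(\Gamma_D)}$ follows exactly as you indicate from the $H^{1/2}(\Gamma)$-stability of $P_\ell^\Gamma$ and the boundedness of $\LL_\ell$ established at the end of Section~\ref{section:sz}.
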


\begin{proof}
Let $\widehat g,\widehat g_\ell\in H^{1/2}(\Gamma)$ denote arbitrary
extensions of $g=u|_{\Gamma_D}$ resp.\ $g_\ell$.
Note that $(\LL_\ell P_\ell \widehat g)|_{\Gamma_D}=(P_\ell u)|_{\Gamma_D}$ as well as
$(\LL_\ell P_\ell \widehat g_\ell)|_{\Gamma_D}=g_\ell$, where $\LL_\ell$ denotes the discrete lifting operator from~\eqref{eq:discreteLifting}.
For $V_\ell\in\SS^1_D(\TT_\ell)$, we thus have
$U_\ell - (V_\ell + \LL_\ell P_\ell\widehat g_\ell)\in\SS^1_D(\TT_\ell)$, whence
\begin{align*}
 \norm{\nabla(u-U_\ell)}{L^2(\Omega)}^2
 = \dual{\nabla(u-U_\ell)}{\nabla(u-(V_\ell + \LL_\ell P_\ell\widehat g_\ell))}_\Omega
\end{align*}
according to the Galerkin orthogonality. Therefore, the Cauchy-Schwarz inequality provides the C\'ea-type quasi-optimality
\begin{align*}
 \norm{\nabla(u-U_\ell)}{L^2(\Omega)}
 \le\min_{V_\ell\in\SS^1_D(\TT_\ell)}\norm{\nabla(u-(V_\ell + \LL_\ell P_\ell\widehat g_\ell))}{L^2(\Omega)}.
\end{align*}
We now plug-in $V_\ell = P_\ell u-\LL_\ell P_\ell \widehat g\in\SS^1_D(\TT_\ell)$ to see
\begin{align*}
 \norm{\nabla(u-U_\ell)}{L^2(\Omega)}
 &\le \norm{\nabla(u-P_\ell u + \LL_\ell P_\ell(\widehat g-\widehat g_\ell))}{L^2(\Omega)}
 \\&
 \lesssim \norm{\nabla(u-P_\ell u)}{L^2(\Omega)}
 + \norm{\widehat g-\widehat g_\ell}{H^{1/2}(\Gamma)}.
\end{align*}
Since the extensions $\widehat g,\widehat g_\ell$ of $g,g_\ell$ were arbitrary, we obtain
\begin{align*}
 \norm{\nabla(u-U_\ell)}{L^2(\Omega)}
 &\lesssim \norm{\nabla(u-P_\ell u)}{L^2(\Omega)}
 + \norm{g-g_\ell}{H^{1/2}(\Gamma_D)}\\
 &\lesssim \min_{W_\ell\in\SS^1(\TT_\ell)}\norm{\nabla(u-W_\ell)}{L^2(\Omega)}
 + \norm{h_\ell^{1/2}(g-g_\ell)'}{L^2(\Gamma_D)}\\
&\new{= \min_{W_\ell\in\SS^1(\TT_\ell)}\norm{\nabla(u-W_\ell)}{L^2(\Omega)} + \oscD{\ell}.}
\end{align*}
where we have used the quasi-optimality of the Scott-Zhang projection, see Section~\ref{section:sz}, and Lemma~\ref{lemma:apx}.
\new{Adding $\oscD{\ell}$ to this estimate, we conclude the proof.}
\end{proof}

\subsection{Quasi-optimality result}
Finally, we may formally state the optimality result~\eqref{eq:optimal:order}
described in the introduction.

\begin{theorem}\label{thm:quasioptimality}
Suppose that the adaptivity parameter $0<\theta<1$ in Algorithm~\ref{algorithm:doerfler} satisfies~\eqref{eq:doerfler:theta} \new{so 
that the} marking strategy is optimal in the sense of \new{Proposition~\ref{prop:doerfler}}. 
Let $U_\ell\in\SS^1(\TT_\ell)$ denote the sequence of discrete solutions generated by 
\new{Algorithm~\ref{algorithm:doerfler}}. If the given data and the corresponding weak solution 
of~\eqref{eq:weakform} satisfy $(u,f,g,\phi)\in\A_s$, there holds
\begin{align}
 \norm{u-U_\ell}{H^1(\Omega)}
 \le\c{afem}(\#\TT_\ell-\#\TT_0)^{-s},
\end{align}
i.e.\
each possible convergence rate $s>0$ is asymptotically achieved by AFEM. The constant $\setc{afem}>0$ depends only on $\norm{(u,f,g,\phi)}{\A_s}$, the initial mesh $\TT_0$, and the adaptivity parameters.
\end{theorem}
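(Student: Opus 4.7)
The proof follows the by-now classical Stevenson--CKNS framework, adapted to the inhomogeneous Dirichlet setting. Introduce the total error
$$\tau_\ell^2 := \norm{\nabla(u-U_\ell)}{L^2(\Omega)}^2 + \osc{\ell}^2 + \oscN{\ell}^2 + \oscD{\ell}^2,$$
which is the quantity appearing in the contraction hypothesis~\eqref{eq:doerfler:contraction} of Proposition~\ref{prop:doerfler}. Efficiency~\eqref{eq:efficient} gives $\varrho_\ell^2\lesssim\tau_\ell^2$, while reliability~\eqref{eq:reliable}, the edgewise bound $\oscN{\ell}(E)\le\eta_\ell(E)$, and the identity $\osc{\ell}^2+\oscD{\ell}^2\le\varrho_\ell^2$ yield the converse $\tau_\ell^2\lesssim\varrho_\ell^2$; together with Theorem~\ref{thm:contraction} this yields $\tau_\ell^2\simeq\varrho_\ell^2\simeq\Delta_\ell$ and hence geometric decay of $\tau_\ell$ along the adaptive sequence.

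The heart of the proof is the single-step optimality estimate
$$\#\MM_\ell \;\lesssim\; \norm{(u,f,g,\phi)}{\A_s}^{1/s}\,\tau_\ell^{-1/s}.$$
Fix $\ell$, set $\eps = c\,\sqrt{q_\star}\,\tau_\ell$ with $c>0$ to be chosen small, and, using $(u,f,g,\phi)\in\A_s$, pick $N_\eps\lesssim(\norm{(u,f,g,\phi)}{\A_s}\eps^{-1})^{1/s}$ together with a mesh $\TT_\eps\in\T_{N_\eps}$ realizing $\sigma(N_\eps,u,f,g,\phi)\le 2\eps$. Form the overlay $\TT_*:=\TT_\ell\oplus\TT_\eps$. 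Since $\TT_*$ refines $\TT_\eps$, all oscillation types are monotone along the refinement (cf.~\cite[Lemma~6]{pp} and~\eqref{eq:nodal:orthogonality}), i.e.\ $\osc{*}\le\osc{\eps}$, $\oscN{*}\le\oscN{\eps}$, $\oscD{*}\le\oscD{\eps}$, and $\SS^1(\TT_\eps)\subseteq\SS^1(\TT_*)$ yields $\inf_{W_*\in\SS^1(\TT_*)}\norm{\nabla(u-W_*)}{L^2(\Omega)}\le\inf_{W_\eps\in\SS^1(\TT_\eps)}\norm{\nabla(u-W_\eps)}{L^2(\Omega)}$. Applying the C\'ea-type estimate of Lemma~\ref{lem:quasiopt} on $\TT_*$ therefore gives $\tau_*^2\lesssim\sigma(N_\eps,\cdot)^2\lesssim\eps^2$, and for $c>0$ small enough one has $\tau_*^2\le q_\star\,\tau_\ell^2$. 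Hypothesis~\eqref{eq:doerfler:contraction} of Proposition~\ref{prop:doerfler} is thus satisfied, so $\RR_\ell(\EE_*)$ fulfils the D\"orfler criterion~\eqref{eq:doerfler}. The minimality of $\MM_\ell$ in Algorithm~\ref{algorithm:doerfler}\,(iii), Proposition~\ref{prop:dlr}, and the overlay bound~\eqref{eq2:nvb} now combine to give
$$\#\MM_\ell \le \#\RR_\ell(\EE_*) \le \c{refine}\,\#\RR_\ell(\TT_*) \le \c{refine}\,(\#\TT_\eps-\#\TT_0)\lesssim\eps^{-1/s}\norm{(u,f,g,\phi)}{\A_s}^{1/s}.$$

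To assemble the final bound, I feed this into the NVB closure bound~\eqref{eq1:nvb}. The contraction Theorem~\ref{thm:contraction} gives $\tau_j\gtrsim\kappa^{(j-\ell)/2}\tau_\ell$ for $0\le j\le\ell$, hence $\tau_j^{-1/s}\lesssim\kappa^{(\ell-j)/(2s)}\tau_\ell^{-1/s}$, and the resulting series is a convergent geometric one since $\kappa^{1/(2s)}<1$:
$$\#\TT_\ell-\#\TT_0 \le \c{nvb}\sum_{j=0}^{\ell-1}\#\MM_j\lesssim\norm{(u,f,g,\phi)}{\A_s}^{1/s}\,\tau_\ell^{-1/s}\sum_{j=0}^{\ell-1}\kappa^{(\ell-j)/(2s)}\lesssim\norm{(u,f,g,\phi)}{\A_s}^{1/s}\,\tau_\ell^{-1/s}.$$
Rearranging yields $\tau_\ell\lesssim\norm{(u,f,g,\phi)}{\A_s}\,(\#\TT_\ell-\#\TT_0)^{-s}$, and the claim follows from reliability $\norm{u-U_\ell}{H^1(\Omega)}\le\c{reliable}\,\varrho_\ell\lesssim\tau_\ell$.

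The main technical obstacle is the matching between the C\'ea-type Lemma~\ref{lem:quasiopt}, which naturally produces only the combination $\norm{\nabla(u-\cdot)}{L^2(\Omega)}^2 + \oscD{\cdot}^2$, and the contraction-type hypothesis of Proposition~\ref{prop:doerfler}, which involves the full set of edge, Neumann, and Dirichlet oscillations. One must therefore verify that every oscillation contribution is monotone along the overlay and, for the edge oscillations $\osc{}$, that the transfer from the $\A_s$-class (which in the paper is phrased via $\oscT{}$) to $\osc{}$ is consistent up to local equivalence (Lemma~\ref{lemma:local}). This juggling of three different oscillation types, together with the loss of exact Galerkin orthogonality that the Dirichlet discretization incurs, is the step that distinguishes the present quasi-optimality analysis from the classical homogeneous CKNS argument.
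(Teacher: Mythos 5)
Your proof is correct and follows essentially the same route as the paper: overlay with a near-optimal mesh $\TT_\eps$, the C\'ea-type Lemma~\ref{lem:quasiopt} plus oscillation monotonicity to verify the hypothesis of Proposition~\ref{prop:doerfler}, minimality of $\MM_\ell$ together with Proposition~\ref{prop:dlr} and the overlay bound~\eqref{eq2:nvb} to estimate $\#\MM_\ell$, and finally the mesh-closure estimate~\eqref{eq1:nvb} combined with the contraction of Theorem~\ref{thm:contraction} and a geometric series. The only cosmetic difference is that the paper bounds $\oscT{*}$ by $\oscT{\eps}$ and afterwards uses Lemma~\ref{lemma:local} to pass from $\oscT{*}$ to $\osc{*}$, whereas you invoke monotonicity of $\osc{}$ directly -- a point you already flag yourself; either bookkeeping is fine.
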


\begin{proof}
Since the proof follows essentially the lines of~\cite{stevenson,ckns}, we leave the elaborate details to the reader. For any $\eps>0$, the definition of the approximation class $\A_s$ guarantees some triangulation $\TT_\eps\in\T$ such that
\begin{align*}
 \inf_{W_\eps \in \SS^1(\TT_\eps)}
 \big(\norm{\nabla(u-W_\eps)}{L^2(\Omega)}^2 + \normL2{h_\eps^{1/2}(g-W_\eps|_\Gamma)^\prime}{\Gamma_D}^2
 + \oscT{\eps}^2 + \oscN{\eps}^2\big)^{1/2}
 \le\eps
\end{align*}
and
\begin{align*}
 \#\TT_\eps -\#\TT_0 \lesssim \eps^{-1/s},
\end{align*}
where the constant depends only on $\norm{(u,f,g,\phi)}{\A_s}$.
We now consider the overlay $\TT_*:=\TT_\eps\oplus\TT_\ell$. With the help of Lemma~\ref{lem:quasiopt} as well as the elementary estimates $\oscT{*}\le\oscT{\eps}$ and $\oscN{*}\le\oscN{\eps}$, we observe
\begin{align*}
 \Lambda_*:=\big(\norm{\nabla(u-U_*)}{L^2(\Omega)}^2 + \oscD{*}^2
+ \oscT{*}^2+\oscN{*}^2\big)^{1/2}
\lesssim\eps,
\end{align*}
since $\SS^1(\TT_\eps)\subseteq\SS^1(\TT_*)$.
Moreover, the overlay estimate~\eqref{eq2:nvb} predicts
\begin{align*}
 \#\RR_\ell(\TT_*)
 \le \#\TT_*-\#\TT_\ell
 \le \#\TT_\eps-\#\TT_0
 \lesssim \eps^{-1/s}.
\end{align*}
Note that Lemma~\ref{lemma:local} together with reliability and efficiency of $\varrho_*$ yield
\begin{align*}
 \Lambda_* \simeq \big(\norm{\nabla(u-U_*)}{L^2(\Omega)}^2
+ \osc{*}^2 + \oscD{*}^2+\oscN{*}^2\big)^{1/2},
\end{align*}
where $\oscT{*}$ is replaced by $\osc{*}$.
Choosing $\eps = \lambda\big(\norm{\nabla(u-U_\ell)}{L^2(\Omega)}^2 + \oscD{\ell}^2
+ \osc{\ell}^2+\oscN{\ell}^2\big)^{1/2}$ with $\lambda>0$ sufficiently small, we enforce \new{the 
reduction~\eqref{eq:doerfler:contraction}} and derive that 
$\RR_\ell(\EE_*)\subseteq\EE_\ell$ \new{satisfies the D\"orfler marking} criterion, \new{cf.\ Proposition~\ref{prop:doerfler}} 
. Minimality of $\MM_\ell$ thus gives
\begin{align*}
 \#\MM_\ell \le \#\RR_\ell(\EE_*) \lesssim \#\RR_\ell(\TT_*)
 \lesssim \eps^{-1/s}
 \simeq \big(\norm{\nabla(u-U_\ell)}{L^2(\Omega)}^2
+ \osc{\ell}^2 + \oscD{\ell}^2+\oscN{\ell}^2\big)^{-1/(2s)}.
\end{align*}
We next note that
\begin{align*}
 \varrho_\ell^2
 \simeq \norm{\nabla(u-U_\ell)}{L^2(\Omega)}^2
 + \osc{\ell}^2+\oscD{\ell}^2+\oscN{\ell}^2
 \simeq \Delta_\ell
\end{align*}
according to reliability and efficiency of $\varrho_\ell$ and the definition of the contraction quantity $\Delta_\ell$ in Theorem~\ref{thm:contraction}.
Combining the last two lines, we see
\begin{align*}
 \#\MM_\ell \lesssim \Delta_\ell^{-1/(2s)} \simeq \varrho_\ell^{-1/s}
 \quad\text{for all }\ell\in\N_0.
\end{align*}
By use of the closure estimate~\eqref{eq1:nvb} of newest vertex bisection, we obtain
\begin{align*}
 \#\TT_\ell - \#\TT_0
 \lesssim \sum_{j=0}^{\ell-1}\#\MM_j
 \lesssim \sum_{j=0}^{\ell-1}\Delta_j^{-1/(2s)}.
\end{align*}
Note that the contraction property~\eqref{eq:contraction} of $\Delta_j$ implies $\Delta_\ell \le \kappa^{\ell-j}\Delta_j$, whence
$\Delta_j^{-1/(2s)} \le \kappa^{(\ell-j)/(2s)}\Delta_\ell^{-1/(2s)}$. According to $0<\kappa<1$ and the geometric series, this gives
\begin{align*}
 \#\TT_\ell - \#\TT_0
 \lesssim \Delta_\ell^{-1/(2s)} \sum_{j=0}^{\ell-1}\kappa^{(\ell-j)/(2s)}
 \lesssim \Delta_\ell^{-1/(2s)} \simeq \varrho_\ell^{-1/s}.
\end{align*}
Altogether, we may therefore conclude $\norm{u-U_\ell}{H^1(\Omega)}\lesssim\varrho_\ell \lesssim (\#\TT_\ell - \#\TT_0)^{-s}$.
\end{proof}

 \begin{remark}
 All convergence and optimality results in this paper are stated for the edge-based error estimator $\varrho_\ell$. Nevertheless, it is 
 only a notational modification to see that also the element-based error estimator $\rho_\ell$ 
 from~\eqref{eq1:estimator:T}--\eqref{eq2:estimator:T} leads to quasi-optimally convergent versions of AFEM. To that end, 
 \new{Algorithm~\ref{algorithm:doerfler} is} slightly modified, and one seeks minimial sets of marked 
 elements $\MM_\ell\subseteq\TT_\ell$ instead. For each marked element $T\in\MM_\ell$, we mark its reference edge. The convergence 
 result in Theorem~\ref{thm:contraction} and the optimality result in Theorem~\ref{thm:quasioptimality} hold accordingly.
 \end{remark}

\section{Some Remarks on the 3D Case}
\label{section:remarks3d}%
\noindent
So far, we have only considered a 2D model problem~\eqref{eq:strongform}. In 3D, one additional difficulty is that the regularity 
assumption $g\in H^1(\Gamma_D)$ is not sufficient to guarantee continuity of $g$. Therefore, one must not use
nodal interpolation to discretize $g\approx g_\ell$ and to define the Dirichlet data oscillations $\oscD{\ell}$.

If we do not use nodal interpolation to approximate $g\approx g_\ell$, the estimator reduction estimate~\eqref{eq:reduction} becomes
\begin{align}\label{eq*:reduction}
\varrho_{\ell+1}^2
\le q\,\varrho_\ell^2 + \c{reduction}\norm{U_{\ell+1}-U_\ell}{H^1(\Omega)}^2,
\end{align}
where $\c{reduction}>0$ additionally depends on $\Omega$. The reason for this is that the analysis provides an additional term $\norm{g_{\ell+1}-g_\ell}{H^{1/2}(\Gamma_D)}^2$ on the right-hand side of~\eqref{eq:reduction} since we loose the
orthogonality relation~\eqref{eq:nodal:orthogonality} which is used in the form
\begin{align*}
 \norm{h_{\ell+1}^{1/2}(g-g_{\ell+1})'}{L^2(\Gamma_D)}^2
 &\le \norm{h_{\ell+1}^{1/2}(g-g_{\ell+1})'}{L^2(\Gamma_D)}^2
 + \norm{h_{\ell+1}^{1/2}(g_{\ell+1}-g_{\ell})'}{L^2(\Gamma_D)}^2\\
 &= \norm{h_{\ell+1}^{1/2}(g-g_{\ell})'}{L^2(\Gamma_D)}^2.
\end{align*}
Instead, an inverse estimate and the Rellich compactness theorem yield
\begin{align*}
 \norm{\nabla(U_{\ell+1}-U_\ell)}{L^2(\Omega)}^2 + \norm{h_{\ell}^{1/2}(g_{\ell+1}-g_\ell)'}{L^2(\Gamma_D)}^2
 &\lesssim
 \norm{\nabla(U_{\ell+1}-U_\ell)}{L^2(\Omega)}^2 + \norm{g_{\ell+1}-g_\ell}{H^{1/2}(\Gamma_D)}^2\\
 &\simeq \norm{U_{\ell+1}-U_\ell}{H^1(\Omega)}^2
\end{align*}
which proves~\eqref{eq*:reduction}.
Note that this estimate holds for \emph{any} discretization of $g\approx g_\ell\in\SS^1(\EE_\ell^{D})$ and even in 3D, where the arclength derivative $(\cdot)'$ is replaced by the surface gradient $\nabla_\Gamma(\cdot)$; we refer to~\cite{ghs} for the inverse estimate.%

A possible choice for $g_\ell$ is $g_\ell = \Pi_\ell g$, where $\Pi_\ell:L^2(\Gamma_D)\to\SS^1(\EE_\ell^{D})$ is the 
$L^2$-orthogonal projection~\cite{bcd}. \new{Alternatively, $g_\ell = P_\ell g$, with $P_\ell: H^{1/2} \rightarrow \SS^1(\EE_\ell^{D})$ the
Scott-Zhang projection is chosen~\cite{sv}.}
Note that newest vertex bisection of $\TT_\ell$ and hence of $\EE_\ell^D$ ensures that $\Pi_\ell$ is a stable projection with respect to the 
$H^1(\Gamma_D)$-norm~\cite{kpp}. In~\cite{kp}, we prove \new{for either choice} the approximation estimate
\begin{align}
 \norm{g-g_\ell}{H^{1/2}(\Gamma_D)}
 \lesssim \norm{h_\ell^{1/2}\nabla_\Gamma(g-g_\ell)}{L^2(\Gamma_D)}
 =: \oscD{\ell}.
\end{align}
Moreover, we show that, for $g_\ell=\Pi_\ell g$, the a~priori limit $g_\infty:=\lim_\ell g_\ell$ exists strongly in $H^\alpha(\Gamma_D)$
for $0\le\alpha<1$ and even weakly in $H^1(\Gamma_D)$
provided that the discrete spaces $\SS^1(\EE_\ell^{D})$ are nested, i.e.\ $\SS^1(\EE_\ell^{D})\subseteq\SS^1(\TT_{\ell+1}|_{\Gamma_D})$ for all $\ell\in\N_0$. Note, however, that this is always the case for adaptive mesh-refining algorithms. In particular, we have
\begin{align}\label{eq:nestedness}
 \SS^1(\TT_\ell) \subseteq \SS^1(\TT_{\ell+1})
 \quad\text{for all }\ell\in\N_0.
\end{align}
In the following, we even aim to prove that nestedness~\eqref{eq:nestedness} implies the existence of the a~priori limit $\lim_\ell U_\ell$ in $H^1(\Omega)$. To that end, we need the following lemma.%

\begin{lemma}[a~priori convergence of Scott-Zhang projection]
\label{lem:apriori:sz} We recall the Scott-Zhang projection $P_\ell$ \new{onto $\SS^1(\TT_\ell)$} and make the additional
assumption that the edges $E_z$ are chosen appropriately, i.e.\ for $\omega_{\ell,z} \subset \bigcup(\TT_\ell \cap \TT_{\ell+1})$ we ensure
that the edge $E_{z}$ is chosen for both operators $P_\ell$ and $P_{\ell+1}$.
Then, the Scott-Zhang interpolands $v_\ell:=P_\ell v\in\SS^1(\TT_\ell)$ of arbitrary
$v\in H^1(\Omega)$ converge to some a~priori limit in $H^1(\Omega)$, i.e.\
there holds
\begin{align}\label{eq:apriori:sz}
 \norm{P_\infty v-P_\ell v}{H^1(\Omega)}
 \xrightarrow{\ell\to\infty}0
\end{align}
for a certain element $P_\infty v\in\SS^1(\TT_\infty):= \overline{\bigcup_{\ell\in N}\SS^1(\TT_\ell)}$.
\end{lemma}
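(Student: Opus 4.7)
The plan is to prove that $(P_\ell v)_\ell$ is a Cauchy sequence in $H^1(\Omega)$; since the limit space $\SS^1(\TT_\infty) := \overline{\bigcup_{\ell\in\N}\SS^1(\TT_\ell)}$ is closed in $H^1(\Omega)$ by definition, this automatically delivers a limit $P_\infty v\in\SS^1(\TT_\infty)$. Two structural ingredients drive the argument. First, the Scott-Zhang stability~\eqref{eq:scottzhangstability}, combined with $\sup_\ell\sigma(\TT_\ell)<\infty$ from Section~\ref{section:nvb}, yields the uniform operator bound $\|P_\ell v\|_{H^1(\Omega)}\le C\|v\|_{H^1(\Omega)}$, which will underpin a density argument. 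Second, a geometric splitting of $\Omega$ into a stable region and its shrinking unstable complement permits a two-step analysis.

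To set up the splitting, I introduce
\[
\Omega_\ell^{\mathrm{stab}} := \bigcup\{T\in\TT_\ell : T\in\TT_m \text{ for all } m\ge\ell\},
\]
which is non-decreasing in $\ell$, and write $A_\ell := \Omega\setminus\Omega_\ell^{\mathrm{stab}}$, so that the pointwise mesh-size $h_\ell(x)$ decreases to $0$ precisely on $A_\infty := \bigcap_\ell A_\ell$. For every node $z\in\overline T$ with $T\subset\Omega_\ell^{\mathrm{stab}}$, the hypothesis on the edges $E_z$ yields stabilization: either the patch $\omega_{\ell,z}$ is eventually preserved, so that iterated application of the compatibility forces $E_z^{(m)}=E_z^{(\ell)}$ for $m\ge\ell$, or one exploits the ``appropriate'' freedom to choose $E_z$ as an edge of $T$ at $z$, which cannot be bisected at any later level because newest vertex bisection preserves the edges of any never-refined triangle. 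In either case $\psi_z^{(m)}=\psi_z^{(\ell)}$ and $\varphi_z^{(m)}|_T=\varphi_z^{(\ell)}|_T$, so that $P_m v = P_\ell v$ pointwise on $\Omega_\ell^{\mathrm{stab}}$ for all $m\ge\ell$.

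For smooth $v\in H^2(\Omega)$, the second-order Scott-Zhang approximation $\|v-P_\ell v\|_{H^1(T)} \lesssim h_T\|v\|_{H^2(\omega_{\ell,T})}$, summed over $T\subset A_\ell$, gives
\[
\|v-P_\ell v\|_{H^1(A_\ell)}^2 \lesssim \int_{\widetilde A_\ell} h_\ell^2\,|\nabla^2 v|^2,
\]
with $\widetilde A_\ell$ a patch enlargement of $A_\ell$. The integrand is dominated by the $L^1$-function $h_0^2|\nabla^2 v|^2$ and vanishes pointwise a.e.\ in $\Omega$, since on $A_\infty$ one has $h_\ell\to 0$ while outside $A_\infty$ the point lies eventually outside $\widetilde A_\ell$ (the patch neighborhood of a stable element being itself stable). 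Dominated convergence thus yields $\|v-P_\ell v\|_{H^1(A_\ell)}\to 0$, and the monotonicity $h_m\le h_\ell$ for $m\ge\ell$ delivers the same bound for $\|v-P_m v\|_{H^1(A_\ell)}$. Combining with $P_m v = P_\ell v$ on $\Omega_\ell^{\mathrm{stab}}$ and the triangle inequality proves $(P_\ell v)_\ell$ is Cauchy in $H^1(\Omega)$ for $v\in H^2(\Omega)$; density of $H^2(\Omega)$ in $H^1(\Omega)$ and the uniform stability above then extend the Cauchy property to arbitrary $v\in H^1(\Omega)$.

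The main obstacle I expect lies in making the ``appropriate'' choice of $E_z$ for nodes whose patches do not stabilize rigorously compatible with the inductive stabilization argument above; a secondary technical point is to verify that the patch enlargement $\widetilde A_\ell$ converges downward to a subset of $A_\infty$, as is needed for the dominated-convergence step, which rests on the fact that, for a fixed initial mesh $\TT_0$, newest vertex bisection produces only finitely many patch shapes.
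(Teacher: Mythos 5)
Your two-region decomposition (stable elements $\Omega_\ell^{\mathrm{stab}}$ vs.\ their complement $A_\ell$) is genuinely coarser than the three-region decomposition $\Omega_\ell^0,\Omega_\ell,\Omega_\ell^*$ that the paper imports from Morin--Siebert--Veeser, and the two obstacles you flag at the end are not technicalities but exactly the places where the missing third region does real work. Both are genuine gaps.

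First, the claim that $P_m v = P_\ell v$ on $\Omega_\ell^{\mathrm{stab}}$ for all $m\ge\ell$ does not follow from the lemma's hypothesis. The hypothesis fixes $E_z$ only when the \emph{full patch} $\omega_{\ell,z}$ lies in $\bigcup(\TT_\ell\cap\TT_{\ell+1})$; it says nothing about a node $z$ of a never-refined triangle $T$ whose patch also contains triangles that are refined. The word ``appropriately'' in the lemma is defined by the ``i.e.'' clause and carries no extra latitude, so your appeal to choosing $E_z$ as an edge of $T$ is a silent strengthening of the hypothesis. It also cannot always be carried out: for a Dirichlet node $z\in\overline\Gamma_D$ the construction forces $E_z\subset\overline\Gamma_D$, so $E_z$ cannot be taken to be an interior edge of a stable element even if one is available; if the boundary edges at $z$ keep being bisected, $(P_\ell v)(z)$ keeps changing even though $T$ is stable. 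The paper avoids this entirely: it only claims $P_m v = P_\ell v$ on $\Omega_\ell^0$, which is defined by stability of the \emph{patch} $\omega_\ell(T)$, and the leftover ``stable element, unstable patch'' strip is absorbed into $\Omega_\ell^*$.

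Second, the dominated-convergence step breaks down. Writing $\widetilde A_\ell = \omega_\ell(A_\ell)$, you need $\chi_{\widetilde A_\ell}(x)\,h_\ell(x)^2 \to 0$ a.e.; for $x\in A_\infty$ this holds because the element containing $x$ is refined infinitely often, but for $x$ lying in a stable triangle that remains adjacent to the unstable region for all $\ell$, one has $x\in\widetilde A_\ell$ forever while $h_\ell(x)$ is constant, so the integrand does not vanish. Your parenthetical remark that ``the patch neighborhood of a stable element [is] itself stable'' is false in general -- this is precisely the scenario adaptive refinement produces near a graded interface. The statement you would actually need, that $\bigcap_\ell \widetilde A_\ell \setminus A_\infty$ is a Lebesgue nullset, is equivalent in spirit to $\lim_\ell|\Omega_\ell^*|=0$, which is a non-trivial combinatorial/measure-theoretic result (\cite[Prop.~4.2]{msv}), not a consequence of there being only finitely many patch shapes. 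The paper invokes this result explicitly and then handles $\Omega_\ell^*$ by local $H^1$-stability of $P_\ell$ together with non-concentration of $L^2$-functions on small sets, rather than by an approximation estimate.

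In short: your skeleton (Cauchy sequence, density of $H^2$, stabilization on a core region, shrinking mesh on the rest) is the right skeleton, but to close it you need exactly the intermediate region $\Omega_\ell^*$ with $|\Omega_\ell^*|\to 0$ and the patch-based definition of the stable core $\Omega_\ell^0$, i.e.\ the [msv] machinery the paper uses.
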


\begin{proof}
We follow the ideas from~\cite{msv} and define the following subsets of $\Omega$:
\begin{align*}
\Omega_\ell^0 &:= \textstyle{\bigcup\{T \in \TT_\ell\,:\, \omega_{\ell}(T) \subset \bigcup\big(\bigcup_{i=0}^\infty \bigcap_{j=i}^\infty \TT_j\}}\big),\\
\Omega_{\ell} &:= \textstyle{\bigcup\{T \in \TT_\ell\,:\, \text{There exists }k \geq 0\text{ s.t. } \omega_{\ell}(T)\text{ is at least uniformly refined in }\TT_{\ell+k} \}},\\
\Omega_{\ell}^* &:= \Omega\setminus(\Omega_{\ell} \cup \Omega_\ell^0),
\end{align*}
where $\omega_{\ell}(\omega):=\bigcup\{T \in \TT_\ell \,:\, T\cap \omega \neq \emptyset\}$ for all measurable $\omega \subset \Omega$.
According to~\cite[Corollary 4.1]{msv}, it holds that
\begin{align}
\label{eq1:apriorisz}
 \lim_{\ell\to\infty}\norm{\chi_{\Omega_\ell}h_\ell}{L^\infty(\Omega)}=0.
\end{align}
Let $\eps>0$ be arbitrary. Since the space $H^2(\Omega)$ is dense in $H^1(\Omega)$, we find $v_\eps \in H^2(\Omega)$ such that
$\norm{v-v_\eps}{H^1(\Omega)} \leq \eps$. Due to local approximation and stability properties of $P_\ell$, we obtain
\begin{align*}
\norm{(1-P_\ell)v}{H^1(\Omega_{\ell})}\lesssim \norm{(1-P_\ell)v_\eps}{H^1(\Omega_{\ell})}+\eps\leq \norm{h_\ell\, D^2 v_\eps}{L^2(\omega_\ell(\Omega_{\ell}))}
+\eps,
\end{align*}
cf.~\cite{sz}.
By use of~\eqref{eq1:apriorisz}, we may choose $\ell_0 \in \N$ sufficiently large
to guarantee
$\norm{h_\ell\,D^2 v_\eps}{L^2(\omega_\ell(\Omega_{\ell}))}\leq
\norm{h_\ell}{L^\infty(\omega_\ell(\Omega_{\ell}))}\norm{D^2 v_\eps}{L^2(\Omega)}\leq \eps$ for all $\ell \geq \ell_0$. Then, there holds
\begin{align}
 \label{eq:szunif}
 \norm{(1-P_\ell)v}{H^1(\Omega_{\ell})}\lesssim \eps \quad \text{for all }\ell \geq \ell_0.
\end{align}
There holds $\lim_{\ell\to\infty}|\Omega_\ell^*|=0$, cf.~\cite[Proposition 4.2]{msv}, and this provides the existence of $\ell_1 \in \N$ such that
\begin{align}
\label{eq:omstar}
 \norm{v}{H^1(\omega_\ell(\Omega_{\ell}^*))}\leq \eps\quad \text{for all } \ell \geq \ell_1
\end{align}
due to the non-concentration of Lebesgue functions.
With these preparations, we finally aim at proving that $P_\ell v$ is a Cauchy sequence in $H^1(\Omega)$. Therefore, let $\ell \geq \max\{\ell_0,\ell_1\}$ and $k\geq 0$ be arbitrary.
First, we use that for any $T \in \TT_\ell$, $(P_\ell v)|_T$ depends only on $v|_{\omega_\ell(T)}$. Then, by definition of $\Omega_\ell^0$ and
our assumption on the definition of $P_\ell$ and $P_{\ell+k}$ on $\TT_\ell \cap \TT_{\ell+k}$, we obtain
\begin{align}
 \label{eq1:szapriori}
\norm{P_\ell v- P_{\ell+k} v }{H^1(\Omega_\ell^0)} = 0.
\end{align}
Second, due to the local stability of $P_\ell$ and~\eqref{eq:omstar}, there holds
\begin{align}
 \label{eq2:szapriori}
\begin{array}{rcl}
\norm{P_\ell v - P_{\ell+k} v}{H^1(\Omega_{\ell}^*)} &\leq& \norm{P_\ell v}{H^1(\Omega_{\ell}^*)}+\norm{P_{\ell+k} v}{H^1(\Omega_{\ell}^*)}\\
&\lesssim& \norm{v}{H^1(\omega_\ell(\Omega_{\ell}^*))}+\norm{v}{H^1(\omega_{\ell+k}(\Omega_{\ell}^*))}\\
&\leq& 2 \norm{v}{H^1(\omega_\ell(\Omega_{\ell}^*))} \leq 2 \eps.
\end{array}
\end{align}
Third, we proceed by exploiting~\eqref{eq:szunif}. We have
\begin{align}
 \label{eq3:szapriori}
\norm{P_\ell v - P_{\ell+k} v}{H^1(\Omega_{\ell})} \leq \norm{P_\ell v - v}{H^1(\Omega_{\ell})}+\normHe{v-P_{\ell+k}v}{\Omega_{\ell}}
 \lesssim \eps.
\end{align}
Combining the estimates from~\eqref{eq1:szapriori}--\eqref{eq3:szapriori}, we conclude
 $\norm{P_\ell v - P_{\ell+k} v}{H^1(\Omega)} \lesssim \eps$,
i.e.\ $(P_\ell v)$ is a Cauchy sequence in $H^1(\Omega)$ and hence convergent.
\end{proof}

Now, we are able to prove a~priori convergence of $U_\ell$ towards some
a~priori limit $u_\infty$.

\begin{proposition}[a~priori convergence of $U_\ell$]\label{prop:apriori}
Suppose that the discrete spaces satisfy nestedness~\eqref{eq:nestedness} and that $U_\ell\in\SS^1(\TT_\ell)$ solves~\eqref{eq:galerkin} with $g_\ell=\Pi_\ell g$ and $\Pi_\ell:L^2(\Gamma_D)\to\SS^1(\EE_\ell^{D})$ the $L^2$-projection. Then, the a~priori limit $u_\infty:=\lim\limits_{\ell\to\infty}U_\ell\in H^1(\Omega)$
exists.
\end{proposition}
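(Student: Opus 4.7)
My plan is to identify an a~priori limit $u_\infty$ as the Galerkin solution on the ``limit space'' $\SS^1(\TT_\infty) := \overline{\bigcup_{\ell\in\N}\SS^1(\TT_\ell)}$ and then to deduce $U_\ell\to u_\infty$ by a C\'ea-type argument driven by the Scott--Zhang a~priori convergence from Lemma~\ref{lem:apriori:sz}. I take for granted the cited convergence $g_\ell=\Pi_\ell g\to g_\infty$ in $H^{1/2}(\Gamma_D)$ from~\cite{kp}, which in particular forces $g_\infty\in\overline{\bigcup_\ell\SS^1(\EE_\ell^{D})}$ (in $H^{1/2}(\Gamma_D)$).

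First, I construct $u_\infty$. Pick any $H^1(\Omega)$-extension $w^\star$ of $g_\infty$; by Lemma~\ref{lem:apriori:sz}, $L_\infty := \lim_\ell P_\ell w^\star\in\SS^1(\TT_\infty)$ exists in $H^1(\Omega)$. Its trace on $\Gamma_D$ equals $\lim_\ell P_\ell^D g_\infty$, where $P_\ell^D$ denotes the induced Scott--Zhang boundary projector. Because $P_\ell^D$ is uniformly $H^{1/2}$-bounded and equals the identity on $\SS^1(\EE_m^D)$ for $\ell\ge m$, a density argument gives $P_\ell^D g_\infty\to g_\infty$ in $H^{1/2}(\Gamma_D)$, so $L_\infty|_{\Gamma_D}=g_\infty$. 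Lax--Milgram on the affine subspace $L_\infty+\SS^1_D(\TT_\infty)$ (coercivity from $|\Gamma_D|>0$ and Friedrichs) then furnishes a unique $u_\infty\in\SS^1(\TT_\infty)$ with $u_\infty|_{\Gamma_D}=g_\infty$ and
\begin{align*}
\dual{\nabla u_\infty}{\nabla v}_\Omega
= \dual{f}{v}_\Omega + \dual{\phi}{v}_{\Gamma_N}
\quad\text{for all }v\in\SS^1_D(\TT_\infty).
\end{align*}

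Since nestedness yields $\SS^1_D(\TT_\ell)\subseteq\SS^1_D(\TT_\infty)$, subtracting the Galerkin identities for $u_\infty$ and $U_\ell$ gives the orthogonality $\dual{\nabla(u_\infty-U_\ell)}{\nabla V}_\Omega=0$ for every $V\in\SS^1_D(\TT_\ell)$. Mimicking the proof of Lemma~\ref{lem:quasiopt}, this implies the quasi-optimality
\begin{align*}
\|\nabla(u_\infty-U_\ell)\|_{L^2(\Omega)}
\le \inf\big\{\|\nabla(u_\infty-W_\ell)\|_{L^2(\Omega)}\,:\,W_\ell\in\SS^1(\TT_\ell),\ W_\ell|_{\Gamma_D}=g_\ell\big\}.
\end{align*}
I build the competitor as $W_\ell:=P_\ell u_\infty+\LL_\ell(\widehat h_\ell)$, where $\widehat h_\ell\in H^{1/2}(\Gamma)$ is any extension of the discrete boundary defect $g_\ell-P_\ell^D g_\infty\in\SS^1(\EE_\ell^{D})$ with $\|\widehat h_\ell\|_{H^{1/2}(\Gamma)}\lesssim\|g_\ell-P_\ell^D g_\infty\|_{H^{1/2}(\Gamma_D)}$. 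The boundary preservation of $P_\ell$ on discrete data gives $W_\ell|_{\Gamma_D}=P_\ell^D g_\infty+(g_\ell-P_\ell^D g_\infty)=g_\ell$, and uniform continuity of $\LL_\ell$ yields
\begin{align*}
\|\nabla(u_\infty-W_\ell)\|_{L^2(\Omega)}
\lesssim \|\nabla(u_\infty-P_\ell u_\infty)\|_{L^2(\Omega)}
+ \|g_\ell-P_\ell^D g_\infty\|_{H^{1/2}(\Gamma_D)}.
\end{align*}
A diagonal argument with Lemma~\ref{lem:apriori:sz} (using that $P_\ell$ is the identity on $\SS^1(\TT_m)$ for $\ell\ge m$) gives $P_\ell u_\infty\to u_\infty$ in $H^1(\Omega)$, and the triangle inequality with $g_\ell\to g_\infty$ and $P_\ell^D g_\infty\to g_\infty$ handles the boundary defect. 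Combined with the straightforward estimate $\|u_\infty-U_\ell\|_{H^1(\Omega)}\lesssim\|\nabla(u_\infty-U_\ell)\|_{L^2(\Omega)}+\|g_\infty-g_\ell\|_{H^{1/2}(\Gamma_D)}$ obtained by subtracting a continuous lifting of the trace mismatch, this completes $U_\ell\to u_\infty$ in $H^1(\Omega)$.

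The main obstacle is the genuine mismatch between the prescribed boundary data $g_\ell=\Pi_\ell g$ (the $L^2$-projection) and the trace $P_\ell^D g_\infty$ that naturally appears when pulling $u_\infty$ through the Scott--Zhang projection; the explicit discrete lifting correction $\LL_\ell(\widehat h_\ell)$ is what bridges these two approximations and allows Lemma~\ref{lem:apriori:sz} to control the C\'ea infimum.
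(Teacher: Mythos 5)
Your proof is correct, but it takes a genuinely different route from the paper's. The paper identifies the limit only at the very end: it first produces harmonic extensions $\widehat g_\ell=w_\ell|_\Gamma$ of the discrete data $g_\ell$, shows $(\widehat g_\ell)$ is Cauchy in $H^{1/2}(\Gamma)$, decomposes $U_\ell=\widetilde U_\ell+\LL_\ell\widehat g_\ell$ into a homogeneous-Dirichlet part and a discrete lifting, shows $\LL_\ell\widehat g_\ell\to P_\infty\LL\widehat g_\infty$ via Lemma~\ref{lem:apriori:sz}, and finally compares $\widetilde U_\ell$ with an auxiliary Galerkin solution $\widetilde U_{\ell,\infty}$ on $\SS^1_D(\TT_\ell)$ whose convergence to a limit $\widetilde u_\infty$ is quoted from a Babu\v{s}ka--Vogelius-type result \cite[Lemma~6.1]{bv}. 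You instead name the limit $u_\infty$ up front as the Galerkin solution in $\SS^1(\TT_\infty)$ with trace $g_\infty$, observe the resulting Galerkin orthogonality against $\SS^1_D(\TT_\ell)\subseteq\SS^1_D(\TT_\infty)$, and drive a C\'ea-type estimate with the explicit competitor $P_\ell u_\infty+\LL_\ell\widehat h_\ell$. What your approach buys is a direct characterization of $u_\infty$ and the replacement of the external citation \cite{bv} by the elementary fact that $\SS^1(\TT_\infty)$-functions are approximated by their Scott--Zhang interpolands (your ``diagonal argument''). What it costs is that you must check the well-posedness and trace identity $L_\infty|_{\Gamma_D}=g_\infty$ for the limit space, which the paper sidesteps by never forming $u_\infty$ explicitly; you handle this correctly, though the density argument for $P_\ell^D g_\infty\to g_\infty$ and for $P_\ell u_\infty\to u_\infty$ (uniform $H^1$-stability of $P_\ell$ plus $P_\ell V_m=V_m$ for $\ell\ge m$) deserves to be spelled out rather than waved at. Both proofs rely on the same two inputs -- the convergence $g_\ell\to g_\infty$ in $H^{1/2}(\Gamma_D)$ from \cite{kp} and the a~priori convergence of the Scott--Zhang projection from Lemma~\ref{lem:apriori:sz} -- and both impose the same implicit compatibility assumption on the choice of edges $E_z$ that Lemma~\ref{lem:apriori:sz} requires, so neither is logically lighter in its hypotheses.
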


\begin{proof}
For $g_\ell\in H^{1/2}(\Gamma)$, we consider the continuous auxiliary problem
\begin{align*}
 -\Delta w_\ell&=0 \quad\text{in }\Omega,\\
 w_\ell &= g_\ell\quad\text{on }\Gamma_D,\\
 \partial_nw_\ell &=0 \quad\text{on }\Gamma_N.
\end{align*}
Let $w_\ell\in H^1(\Omega)$ be the unique (weak) solution and note that the trace $\widehat g_\ell:=w_\ell|_\Gamma \in H^{1/2}(\Gamma)$ provides an extension of $g_\ell$ with
\begin{align*}
 \norm{\widehat g_\ell}{H^{1/2}(\Gamma)}
 \le \norm{w_\ell}{H^1(\Omega)}
 \lesssim \norm{g_\ell}{H^{1/2}(\Gamma_D)}
 \le \norm{\widehat g_\ell}{H^{1/2}(\Gamma)}.
\end{align*}
For arbitrary $k,\ell\in\N$, the same type of arguments proves
\begin{align*}
 \norm{\widehat g_\ell-\widehat g_k}{H^{1/2}(\Gamma)}
 \simeq \norm{g_\ell-g_k}{H^{1/2}(\Gamma_D)}.
\end{align*}
Since $(g_\ell)$ is a Cauchy sequence in $H^{1/2}(\Gamma_D)$, cf.~\cite{kp}, we obtain that $(\widehat g_\ell)$ is a Cauchy sequence in $H^{1/2}(\Gamma)$, whence convergent with limit $\widehat g_\infty\in H^{1/2}(\Gamma)$.

Second, note that $(\LL_\ell\widehat g_\ell)|_{\Gamma_D} = g_\ell$, where $\LL_\ell=P_\ell\LL$ denotes the discrete lifting from~\eqref{eq:discreteLifting}.
Therefore, $\widetilde U_\ell := U_\ell - \LL_\ell \widehat g_\ell \in \SS^1_D(\TT_\ell)$ is the unique solution of the variational form
\begin{align}
\label{eq1:weak3d}
 \dual{\nabla\widetilde U_\ell}{\nabla V_\ell}_\Omega=\dual{\nabla u}{\nabla V_\ell}_\Omega-\dual{\nabla\LL_\ell \widehat g_\ell}{\nabla V_\ell}_\Omega \quad \text{ for all } V_\ell \in \SS^1_D(\TT_\ell).
\end{align}
Third, Lemma~\ref{lem:apriori:sz} implies
\begin{align*}
 \normHe{\LL_\ell \widehat g_\ell -P_\infty \LL \widehat g_\infty}{\Omega}
 &\leq \normHe{P_\ell(\LL \widehat g_\ell - \LL \widehat g_\infty)}{\Omega}
 + \normHe{P_\ell \LL \widehat g_\infty - P_\infty \LL \widehat g_\infty}{\Omega}\\
 &\lesssim\normHeh{\widehat g_\ell - \widehat g_\infty}{\Gamma}
 +\normHe{P_\ell \LL \widehat g_\infty - P_\infty \LL \widehat g_\infty}{\Omega}
 \xrightarrow{\ell\to\infty} 0.
\end{align*}
Fourth, let $\widetilde U_{\ell,\infty} \in \SS^1_D(\TT_\ell)$ be the unique solution of the discrete auxiliary problem
\begin{align}
\label{eq2:weak3d}
\dual{\nabla\widetilde U_{\ell,\infty}}{\nabla V_\ell}_\Omega=\dual{\nabla u}{\nabla V_\ell}_\Omega-\dual{\nabla P_\infty\LL \widehat g_\infty}{\nabla V_\ell}_\Omega
\quad \text{ for all } V_\ell \in \SS^1_D(\TT_\ell).
\end{align}
Due to the nestedness of the ansatz spaces $\SS^1_D(\TT_\ell)$, we derive a priori convergence $\widetilde U_{\ell,\infty} \xrightarrow{\ell\to\infty} \widetilde u_\infty\in H^1(\Omega)$, where $\widetilde u_\infty$ denotes the Galerkin solution with respect to the closure of $\bigcup^\infty_{\ell = 0} \SS^1_D(\TT_\ell)$ in $H^1_0(\Omega)$ , see e.g.~\cite[Lemma~6.1]{bv}. With the stability of~\eqref{eq1:weak3d} and~\eqref{eq2:weak3d}, we obtain
\begin{align*}
 \norm{\nabla(\widetilde U_{\ell,\infty} - \widetilde U_{\ell})}{L^2(\Omega)} \lesssim \normHe{\LL_\ell \widehat g_\ell- P_\infty \LL \widehat g_\infty}{\Omega} \xrightarrow{\ell\to\infty} 0,
\end{align*}
and therefore $\widetilde U_\ell \xrightarrow{\ell\to\infty} \widetilde u_\infty$ in $H^1(\Omega)$.
Finally, we conclude
\begin{align*}
 U_\ell = \widetilde U_\ell + \LL_\ell \widehat g_\ell \xrightarrow{\ell\to\infty} \widetilde u_\infty + P_\infty \LL \widehat g_\infty =:u_\infty \in H^1(\Omega),
\end{align*}
which concludes the proof.
\end{proof}

\new{
\begin{remark}
 Note that Proposition~\ref{prop:apriori} also holds if the Scott-Zhang projection is used to discretize $g \approx g_\ell = P_\ell g$.
This immediately follows from Lemma~\ref{lem:apriori:sz}, since $g_\ell = (P_\ell \LL g)|_{\Gamma_D} \to (P_\infty \LL g)|_{\Gamma_D}$ as $\ell\to\infty$.
\end{remark}
}

\new{
\begin{theorem}\label{thm:3Dconvergence}
Suppose that either the $L^2$-projection $g_\ell = \Pi_\ell g$ or the Scott-Zhang operator $g_\ell = P_\ell g$
is used to discretize the Dirichlet data $g\in H^1(\Gamma)$. Then, Algorithm~\ref{algorithm:doerfler}
guarantees $\lim_\ell \norm{u - U_\ell}{H^1(\Omega)} = 0$ for both 2D and 3D.
\end{theorem}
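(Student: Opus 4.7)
The plan is to combine the a~priori convergence of the Galerkin solutions in $H^1(\Omega)$ with the modified estimator reduction~\eqref{eq*:reduction}, which is already known to hold for \emph{any} choice $g_\ell\in\SS^1(\EE_\ell^D)$, and in particular for $g_\ell=\Pi_\ell g$ as well as $g_\ell=P_\ell g$, in 2D and in 3D. The a~priori limit $u_\infty\in H^1(\Omega)$ with $U_\ell\to u_\infty$ is provided by Proposition~\ref{prop:apriori} for the $L^2$-projection and by the subsequent remark for the Scott-Zhang projection. Reliability of $\varrho_\ell$ with the redefined Dirichlet oscillation $\oscD{\ell}:=\norm{h_\ell^{1/2}\nabla_\Gamma(g-g_\ell)}{L^2(\Gamma_D)}$ follows verbatim from the proof of Proposition~\ref{prop:reliability:rho}, replacing Lemma~\ref{lemma:apx} by the analogous estimate $\norm{g-g_\ell}{H^{1/2}(\Gamma_D)}\lesssim\oscD{\ell}$ from~\cite{kp} that is cited in Section~\ref{section:remarks3d}.

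Given these ingredients, D\"orfler marking combined with the estimator reduction yields constants $0<q<1$ and $C>0$ such that
\[
 \varrho_{\ell+1}^2 \le q\,\varrho_\ell^2 + C\,\norm{U_{\ell+1}-U_\ell}{H^1(\Omega)}^2
 \quad\text{for all }\ell\in\N_0.
\]
Since $(U_\ell)$ is a Cauchy sequence in $H^1(\Omega)$ by a~priori convergence, the perturbation $\eps_\ell := \norm{U_{\ell+1}-U_\ell}{H^1(\Omega)}^2$ tends to zero.

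The final step is the standard perturbed-contraction argument. Given $\eps>0$, choose $L\in\N$ with $\eps_\ell<\eps$ for all $\ell\ge L$. Iterating the estimator reduction from $\ell=L$ gives, for every $n\in\N$,
\[
 \varrho_{L+n}^2
 \le q^n\,\varrho_L^2
 + C\sum_{j=0}^{n-1}q^{n-1-j}\eps_{L+j}
 \le q^n\,\varrho_L^2 + \frac{C\,\eps}{1-q}.
\]
Passing to the limit $n\to\infty$ yields $\limsup_{\ell\to\infty}\varrho_\ell^2 \le C\eps/(1-q)$, and since $\eps>0$ was arbitrary, $\varrho_\ell\to 0$. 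Reliability of $\varrho_\ell$ then gives $\norm{u-U_\ell}{H^1(\Omega)}\lesssim\varrho_\ell \to 0$, which is the claim.

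The proof itself is short, because all heavy lifting is already done. The genuine obstacle was the a~priori convergence $U_\ell\to u_\infty$ in $H^1(\Omega)$; without it we could not control the non-contractive term in~\eqref{eq*:reduction}. This obstacle has been removed by Lemma~\ref{lem:apriori:sz} (pointwise convergence of the Scott-Zhang projection) and Proposition~\ref{prop:apriori}, both of which crucially use nestedness~\eqref{eq:nestedness} of the discrete spaces ensured by newest vertex bisection.
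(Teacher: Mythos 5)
Your proof is correct and follows essentially the same route as the paper: combine the a~priori convergence $U_\ell\to u_\infty$ (Proposition~\ref{prop:apriori} and the subsequent remark) with the modified estimator reduction~\eqref{eq*:reduction} and then invoke reliability. The only difference is that you spell out the perturbed-contraction calculation and the needed reliability statement explicitly, whereas the paper delegates these to a citation and to ``elementary calculus''.
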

}

\new{
\begin{proof}
With Proposition~\ref{prop:apriori} and the estimator reduction~\eqref{eq*:reduction}, we obtain
\begin{align*}
 \varrho_{\ell+1}^2 \le q\,\varrho_\ell^2 + \alpha_\ell,
 \quad\text{where}\quad 0<q<1
 \text{ and }\alpha_\ell\ge0
 \text{ with }\alpha_\ell\xrightarrow{\ell\to\infty}0.
\end{align*}
From this and elementary calculus, we deduce estimator convergence $\lim_\ell\varrho_\ell = 0$, 
cf.~\cite{afp} for the concept of estimator reduction.
According to reliability of $\varrho_\ell$, this yields convergence of the adaptive algorithm. 
\end{proof}
}

Note, however, that this convergence result is much weaker than the contraction result of
Theorem~\ref{thm:contraction}. With the techniques of the present paper, it is unclear how to prove a contraction result if the
additional orthogonality relation~\eqref{eq:nodal:orthogonality} fails to hold.

\begin{figure}[h]
\begin{center}%
\includegraphics[width=40mm]{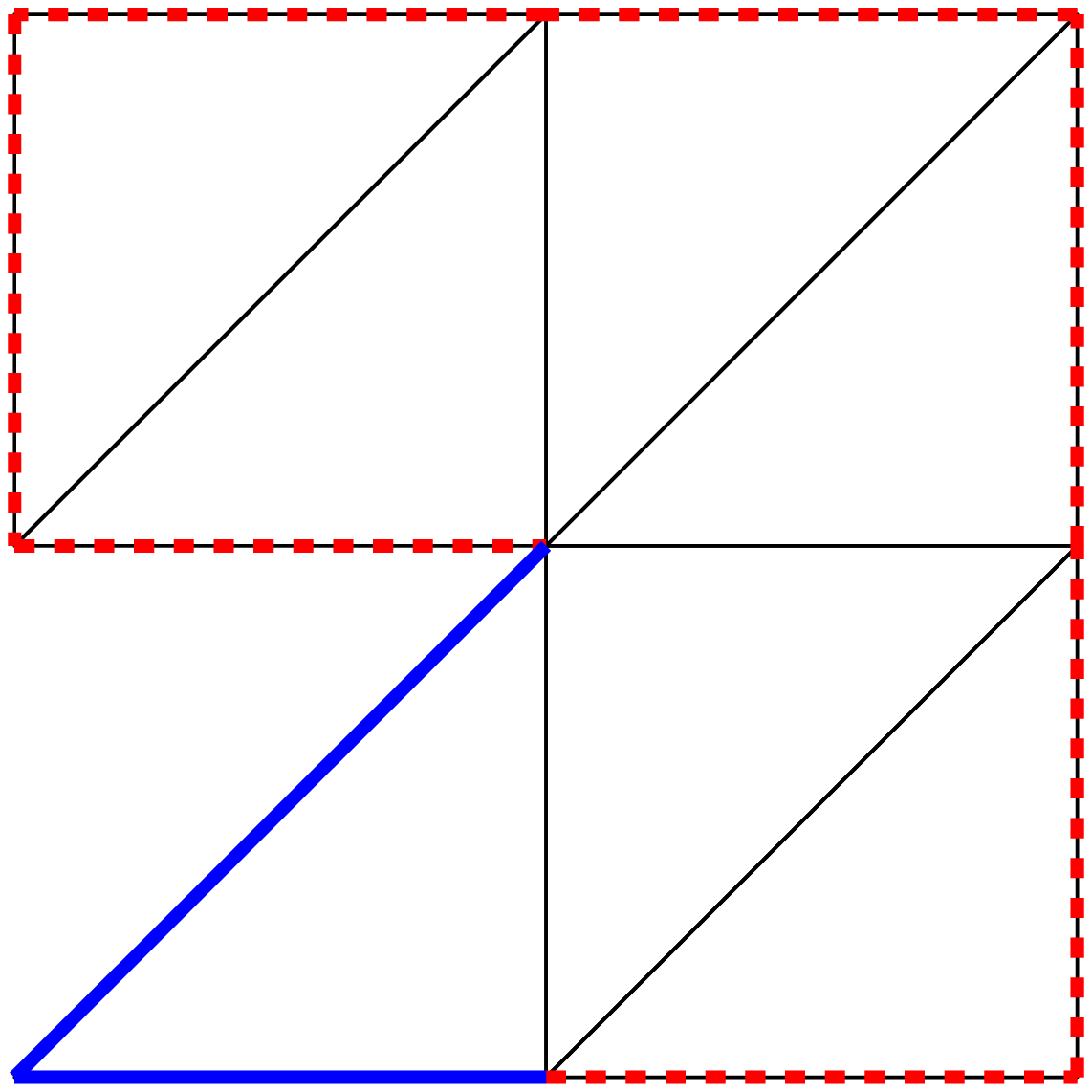}
\hspace{20mm}
\includegraphics[width=40mm]{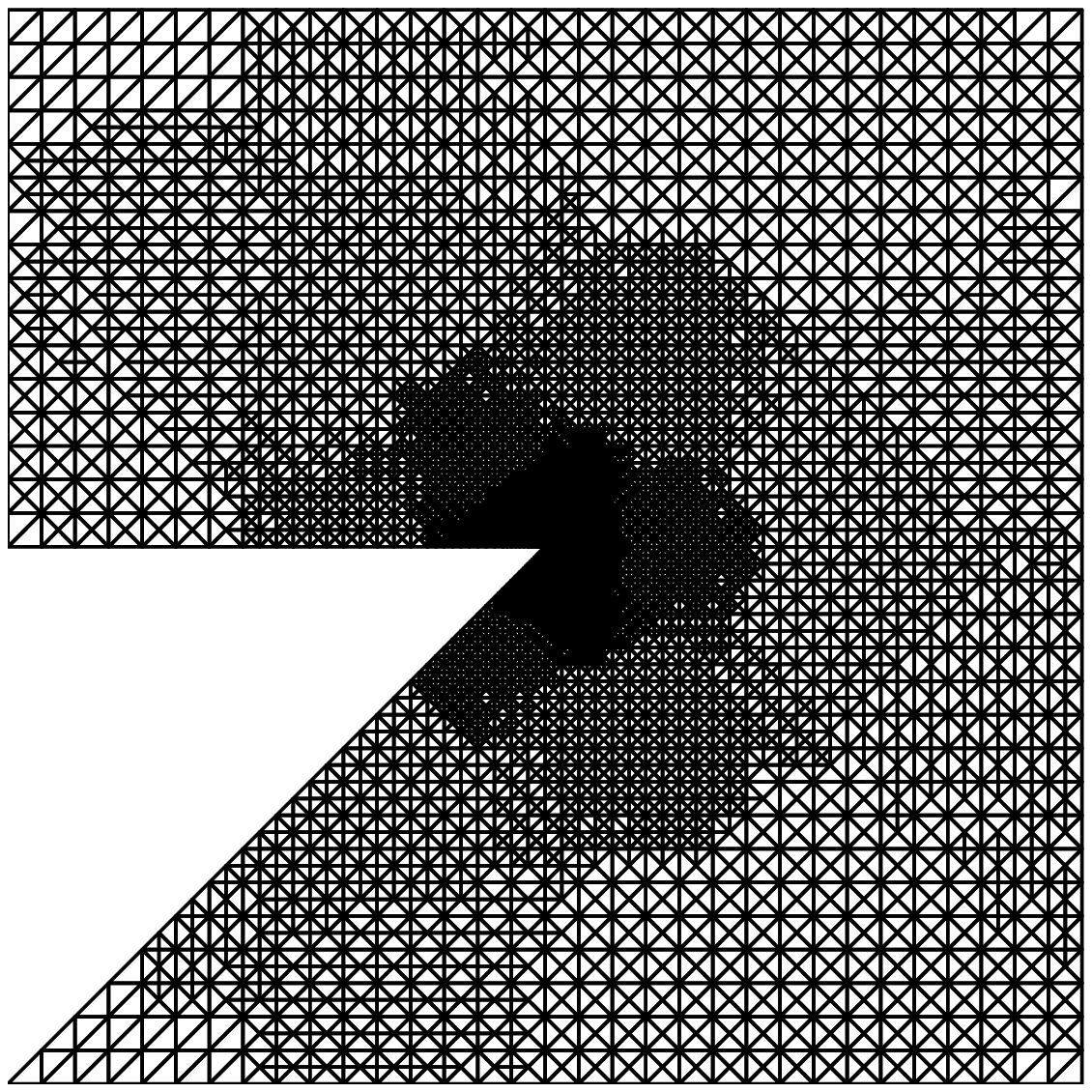}
\caption{Z-shaped domain with initial mesh $\TT_0$ and adaptively generated mesh $\TT_9$ with $N = 10966$ for $\theta = 0.5$ in Algorithm~\ref{algorithm:doerfler}. The Dirichlet boundary $\Gamma_D$ is marked with a solid line, whereas the dashed line denotes the Neumann boundary $\Gamma\backslash\Gamma_D$.}
\label{fig:meshes}
\end{center}
\end{figure}
\begin{figure}[t]
\begin{center}
\psfrag{aneumannOsc}{$\eta_{N,\ell}$ (adap.)}
\psfrag{ajumpsN}{$\eta_{\Omega,\ell}$ (adap.)}
\psfrag{ujumpsN}{$\eta_{\Omega,\ell}$ (unif.)}
\psfrag{uneumannOsc}{$\eta_{N,\ell}$ (unif.)}
\psfrag{adirOsc}{$\oscD\ell$ (adap.)}
\psfrag{udirOsc}{$\oscD\ell$ (unif.)}
\psfrag{aedgeOsc}{$\osc\ell$ (adap.)}
\psfrag{uedgeOsc}{$\osc\ell$ (unif.)}
\psfrag{O12}[cc][cc][1][-15]{$\mathcal{O}(N^{-1/2})$}
\psfrag{O27}[cc][cc][1][-10]{$\mathcal{O}(N^{-2/7})$}
\psfrag{O34}[cc][cc][1][-23]{$\mathcal{O}(N^{-3/4})$}
\includegraphics[width=170mm]{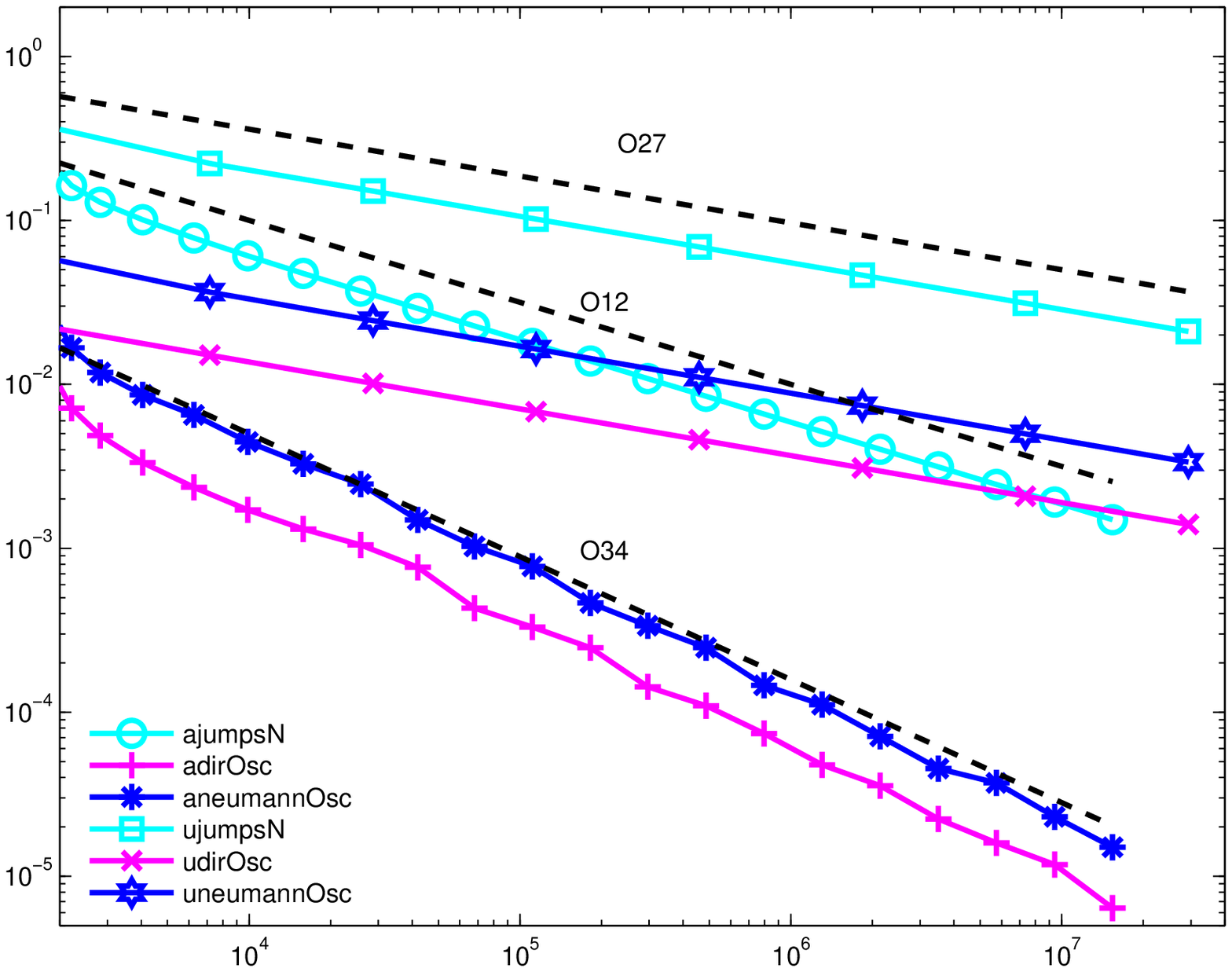}
\caption{Numerical results for $\varrho_\ell$ for uniform and adaptive mesh-refinement with Algorithm~\ref{algorithm:doerfler} 
resp.\ \new{the modified D\"orfler marking} and $\theta \in \{0.2, 0.5, 0.8\}$,
plotted over the number of ele\-ments $N = \# \TT_\ell$.}
\label{fig:convRho}
\end{center}
\end{figure}
\begin{figure}[t]
\begin{center}
\psfrag{t02}{$\theta = 0.2$ (Alg.~\ref{algorithm:doerfler})}
\psfrag{t05}{$\theta = 0.5$ (Alg.~\ref{algorithm:doerfler})}
\psfrag{t08}{$\theta = 0.8$ (Alg.~\ref{algorithm:doerfler})}
\psfrag{uniform}{uniform}
\psfrag{t02m}{$\theta = 0.2$ (mod.)}
\psfrag{t05m}{$\theta = 0.5$ (mod.)}
\psfrag{t08m}{$\theta = 0.8$ (mod.)}
\psfrag{O12}[cc][cc][1][-25]{$\mathcal{O}(N^{-1/2})$}
\psfrag{O27}[tc][bc][1][-15]{$\mathcal{O}(N^{-2/7})$}
\psfrag{O34}[cc][cc][1][-23]{$\mathcal{O}(N^{-3/4})$}
\includegraphics[width=170mm]{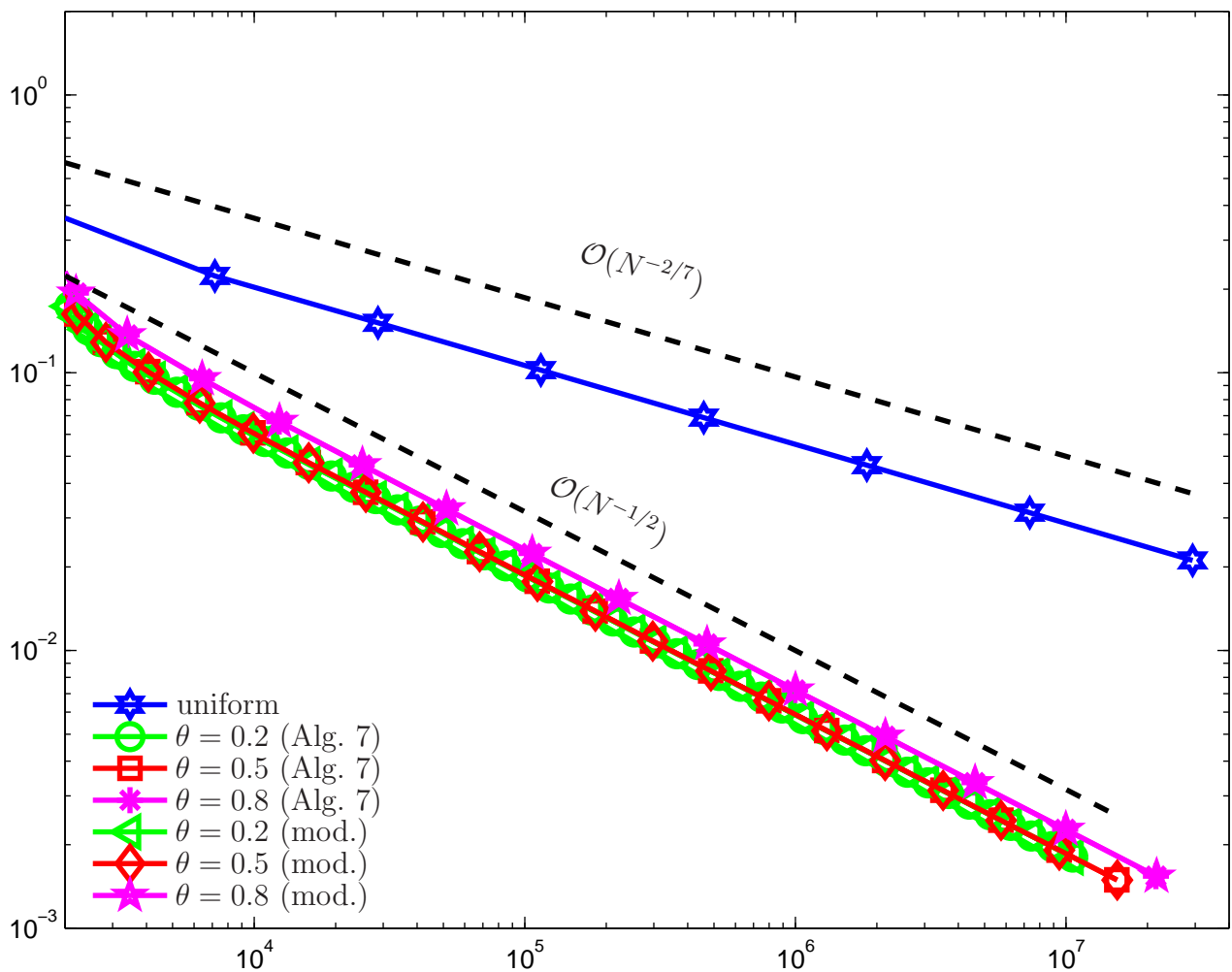}
\caption{Numerical results for $\eta_{\Omega,\ell}$, $\oscD\ell$, and $\eta_{N,\ell}$ for uniform and adaptive mesh-refinement with
Algorithm~\ref{algorithm:doerfler} and $\theta = 0.5$, plotted over the number of ele\-ments $N = \# \TT_\ell$. Adaptive refinement leads to optimal
convergence rates.}
\label{fig:convTotal}
\end{center}
\end{figure}
%
\section{Numerical Experiment}
\label{section:numerics}%
\subsection{Example with known solution}
\noindent
On the Z-shaped domain $\Omega=(-1, 1)^2\backslash \conv\{(0,0),$\linebreak$ (-1,-1), (0,-1)\}$, we consider the mixed boun\-dary value 
problem~\eqref{eq:strongform}, where the partition of the boundary $\Gamma=\partial\Omega$ into Dirichlet boundary $\Gamma_D$ and 
Neumann boundary $\Gamma_N$ as well as the initial mesh are shown in Figure~\ref{fig:meshes}. We prescribe the exact solution $u(x)$ 
in polar coordinates by
\begin{align}
 u(x) = r^{4/7}\cos(4\varphi/7)
 \quad\text{for }x=r\,(\cos\varphi,\sin\varphi).
\end{align}
Then, $f=-\Delta u\equiv0$, and the solution $u$ as well as its Dirichlet data $g = u|_{\Gamma_D}$ admit a generic singularity at the 
reentrant corner $r = 0$. 

Figure~\ref{fig:convRho} shows a comparison between uniform and adaptive mesh refinement. For \new{the algorithm} based on the 
modified D\"orfler marking, we use $\theta:= \vartheta = \theta_1 = \theta_2$. For \new{both algorithms}, we then vary the adaptivity parameter $\theta$
between $0.2$ and $0.8$. We observe that both adaptive algorithms lead to the optimal convergence rate $\mathcal{O}(N^{-1/2})$
for all choices of $\theta$, whereas uniform refinement leads only to suboptimal convergence behaviour of approximately $\mathcal{O}(N^{-2/7})$.

Note that due to $f \equiv 0$, we have $\osc\ell \equiv 0$ in this example.
In Figure~\ref{fig:convTotal}, we compare the jump terms
\[
\eta_{\Omega,\ell}^2 := \sum_{E\in\EE_\ell^\Omega}|E|\norm{[\partial_nU_\ell]}{L^2(E)}^2,
\]
the Dirichlet data oscillations $\oscD\ell$, and the Neumann jump terms
\[
\eta_{N,\ell}^2 := \sum_{E \in \EE_\ell^N} |E|\norm{\phi - \partial_nU_\ell}{L^2(E)}^2
\]
for uniform and adaptive refinement.
Due to the corner singularity at $r = 0$, uniform refinement leads to a suboptimal convergence behaviour for $\eta_{\Omega,\ell}$ and even for $\oscD\ell$ and $\eta_{N,\ell}$, i.e.\ all contributions of $\varrho_\ell^2 = \eta_{\Omega,\ell}^2 + \eta_{N,\ell}^2 + \oscD{\ell}$ show the same poor convergence rate of approximately $\OO(N^{-2/7})$. For adaptive mesh-refinement, we observe that the optimal order of convergence is retained, namely $\varrho_\ell \simeq \eta_\ell = \OO(N^{-1/2})$. Moreover, we even observe optimal convergence behaviour $\oscD{\ell} \simeq \eta_{N,\ell} = \OO(N^{-3/4})$ for the boundary contributions of $\varrho_\ell$.

Finally, in Figure~\ref{fig:meshes}, the initital mesh $\TT_0$ and the adaptively generated mesh $\TT_9$ with $N=10966$ Elements
are visualized. As expected, adaptive refinement is essentially concentrated around the reentrant corner $r = 0$.
\subsection{Example with unknown solution}
On the L-shaped domain $\Omega=(-1,1)^2\setminus (-1,0)\times (0,1)$, we consider the mixed boundary value problem~\eqref{eq:strongform}. The initial configuration with Dirichlet boundary $\Gamma_D$, Neumann boundary $\Gamma_N$, as well as the initial mesh is shown in Figure~\ref{fig:meshes2}. 
For the unknown solution $u\in H^1(\Omega)$, we prescribe in polar coordinates with respect to $(0,0)$
\begin{align*}
g &= u|_{\Gamma_D} = r^{2/3}\sin(2\varphi/3)\quad\text{on }\Gamma_D,\\
\phi&= \partial_n u = 0\qquad\quad\quad\;\,\qquad\text{on }\Gamma_N,\\
f&=-\Delta u = |1-r|^{-1/4}\quad\;\,\,\text{in }\Omega.
\end{align*}
There holds $g\in H^1(\Gamma_D)$, $\phi\in L^2(\Gamma_N)$, and $f\in L^2(\Omega)$. Note that the Dirichlet data $g$ has a singularity at the reentrant corner $(0,0)$, whereas the volume force $f$ is singular along the circle around $(0,0)$ with radius $r=1$. 
Again, we compare the standard D\"orfler marking strategy as well the modified D\"orfler marking with the uniform approach.
\begin{figure}
\begin{center}
\includegraphics[width=40mm]{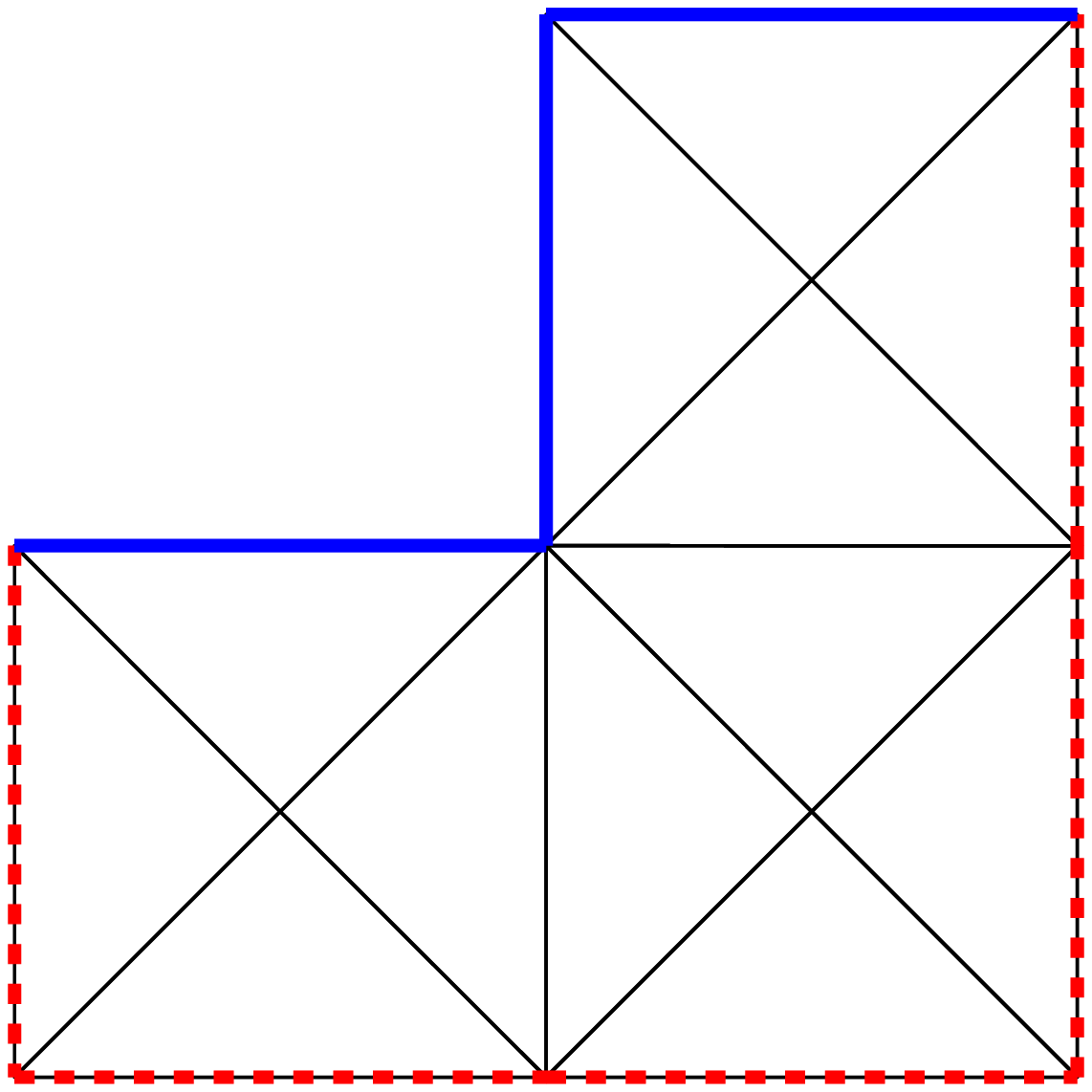}
\hspace{20mm}
\includegraphics[width=40mm]{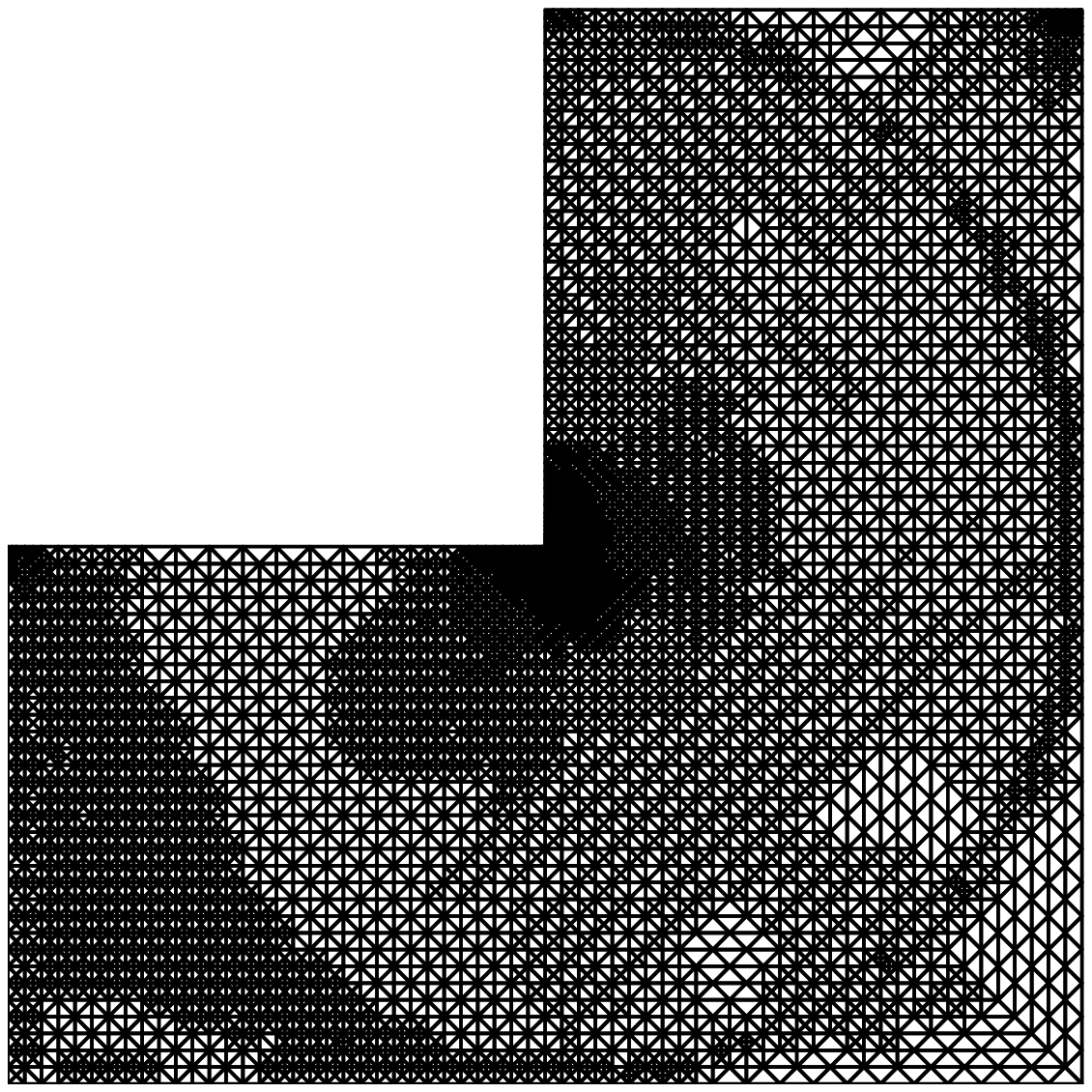}
\caption{L-shaped domain with initial mesh $\TT_0$ and adaptively generated mesh $\TT_9$ with $N = 12177$ for $\theta = 0.5$ in Algorithm~\ref{algorithm:doerfler}. The Dirichlet boundary $\Gamma_D$ is marked with a solid line, whereas the dashed line denotes the Neumann boundary $\Gamma\backslash\Gamma_D$.}
\label{fig:meshes2}
\end{center}
\end{figure}
Figure~\ref{fig:convRho2} shows a comparison between uniform and adaptive mesh refinement. The parameters $\theta=\vartheta=\theta_1=\theta_2$ are varied between $0.2$ and $0.8$. Both adaptive algorithms lead to optimal convergence rate $\mathcal{O}(N^{-1/2})$ for all choices of $\theta$, whereas uniform refinement leads only to a suboptimal rate of $\mathcal{O}(N^{-1/3})$.
\begin{figure}
\begin{center}
\psfrag{t02}{$\theta = 0.2$ (Alg.~\ref{algorithm:doerfler})}
\psfrag{t05}{$\theta = 0.5$ (Alg.~\ref{algorithm:doerfler})}
\psfrag{t08}{$\theta = 0.8$ (Alg.~\ref{algorithm:doerfler})}
\psfrag{uniform}{uniform}
\psfrag{t02m}{$\theta = 0.2$ (mod.)}
\psfrag{t05m}{$\theta = 0.5$ (mod.)}
\psfrag{t08m}{$\theta = 0.8$ (mod.)}
\psfrag{O12}[tc][bc][1][-25]{$\mathcal{O}(N^{-1/2})$}
\psfrag{O13}[cc][cc][1][-15]{$\mathcal{O}(N^{-1/3})$}
\psfrag{O34}{$\hspace{-1.2cm}\mathcal{O}(N^{-3/4})$}
\includegraphics[width=140mm]{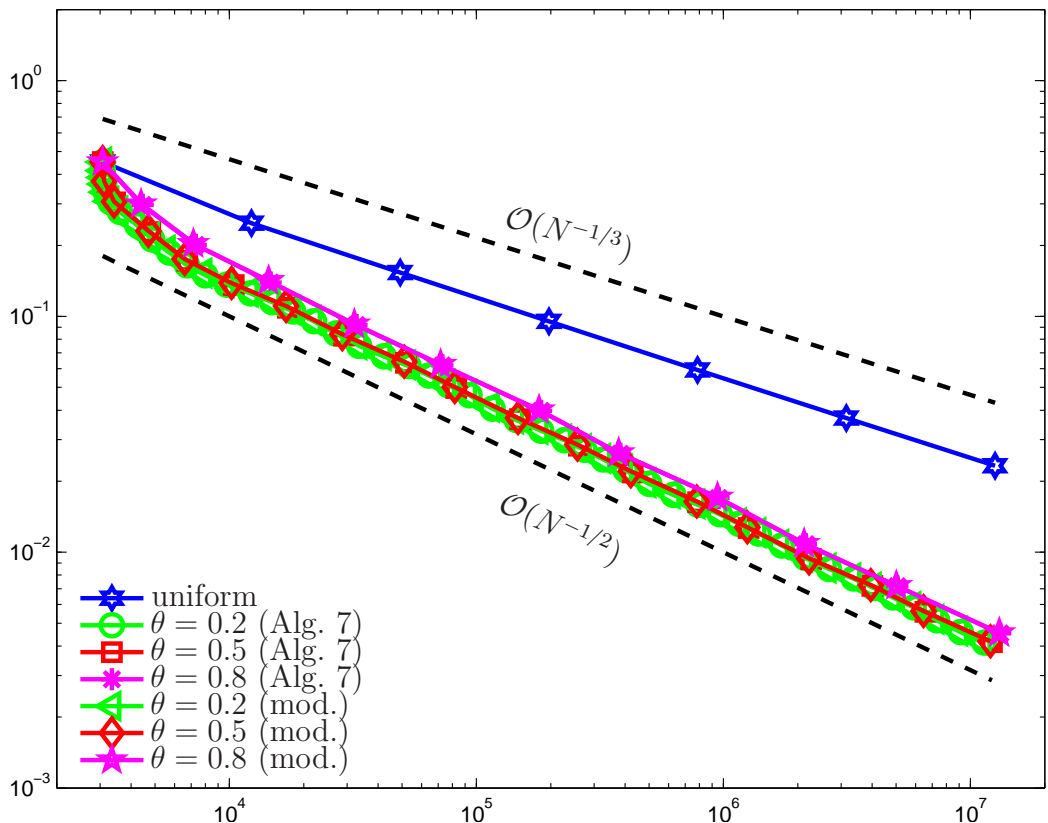}
\caption{Numerical results for $\varrho_\ell$ for uniform and adaptive mesh-refinement with Algorithm~\ref{algorithm:doerfler} 
resp.\ \new{the modified D\"orfler marking} and $\theta \in \{0.2, 0.5, 0.8\}$,
plotted over the number of ele\-ments $N = \# \TT_\ell$.}
\label{fig:convRho2}
\end{center}
\end{figure}
In Figure~\ref{fig:convTotal2}, we compare the estimator contributions which (in contrast to the previous example) include additional volume oscillations $\osc\ell$. Due to the data singularities, as well as the singularity introduced by the change of the boundary condition, uniform refinement leads only to suboptimal convergence rates for all estimator contributions. For adaptive mesh-refinement, we observe that the optimal order of convergence is retained. This means $\varrho_\ell \simeq \eta_\ell = \OO(N^{-1/2})$ and includes even optimal convergence behaviour $\oscD{\ell} \simeq \eta_{N,\ell} = \OO(N^{-3/4})$ for the boundary contributions of $\varrho_\ell$. 
In Figure~\ref{fig:meshes2}, one observes the adaptive refinement towards the singularity in the reentrant
corner as well as the circular singularity of $f$ and the singularities which stem from the change of boundary conditions.
\begin{figure}
\begin{center}
\psfrag{aneumannOsc}{$\eta_{N,\ell}$ (adap.)}
\psfrag{ajumpsN}{$\eta_{\Omega,\ell}$ (adap.)}
\psfrag{ujumpsN}{$\eta_{\Omega,\ell}$ (unif.)}
\psfrag{uneumannOsc}{$\eta_{N,\ell}$ (unif.)}
\psfrag{adirOsc}{$\oscD\ell$ (adap.)}
\psfrag{udirOsc}{$\oscD\ell$ (unif.)}
\psfrag{aedgeOsc}{$\osc\ell$ (adap.)}
\psfrag{uedgeOsc}{$\osc\ell$ (unif.)}
\psfrag{O12}[tc][bc][1][-17]{$\mathcal{O}(N^{-1/2})$}
\psfrag{O13}[cc][cc][1][-12]{$\mathcal{O}(N^{-1/3})$}
\psfrag{O34}[tc][bc][1][-25]{$\mathcal{O}(N^{-3/4})$}
\includegraphics[width=140mm]{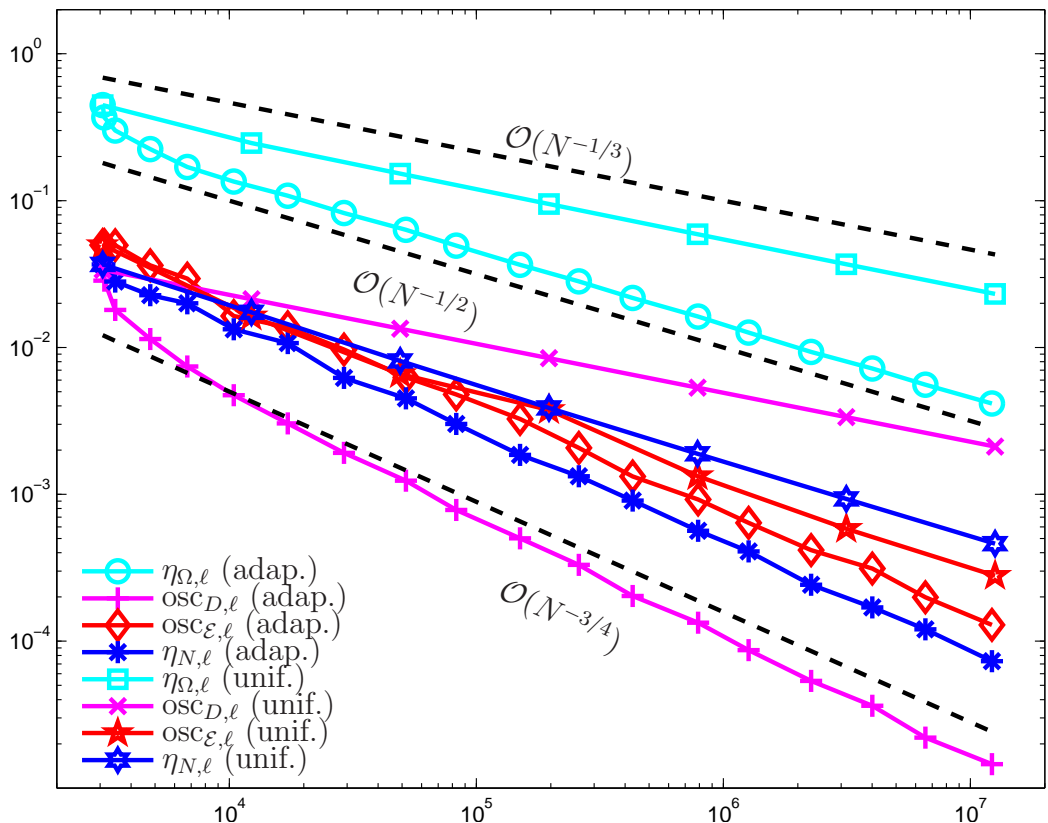}
\caption{Numerical results for $\eta_{\Omega,\ell}$, $\oscD\ell$, and $\eta_{N,\ell}$ for uniform and adaptive mesh-refinement with
Algorithm~\ref{algorithm:doerfler} and $\theta = 0.5$, plotted over the number of ele\-ments $N = \# \TT_\ell$. Adaptive refinement leads to optimal
convergence rates.}
\label{fig:convTotal2}
\end{center}
\end{figure}

\textbf{Acknowledgement.} The authors M.F. and D.P. are funded by the Austrian Science Fund (FWF) under grant P21732 \textit{Adaptive Boundary Element Method}, which is thankfully acknowledged. M.P. acknowledges support through the project \textit{Micromagnetic Simulations and Computational Design of Future Devices}, funded by the Viennese Science and Technology Fund (WWTF) under grant MA09-029
\newcommand{\bibentry}[2][!]{\ifthenelse{\equal{#1}{!}}{\bibitem{#2}}{\bibitem{#2}}}

\end{document}